\theoremstyle{thmstyleone}%
\newtheorem{theorem}{Theorem}[section]%  meant for continuous numbers
\newtheorem{proposition}[theorem]{Proposition}% 
\theoremstyle{thmstyletwo}%
\newtheorem{remark}[theorem]{Remark}%
\theoremstyle{thmstylethree}%
\newtheorem{lemma}[theorem]{Lemma}
\newtheorem{corollary}[theorem]{Corollary}
\newcommand{\R}{{\mathbb R}}
\newcommand{\N}{{\mathbb N}}
\providecommand{\norm}[1]{\| #1 \|}%{\left\vert #1 \right\vert}
\providecommand{\cof}[1]{\mathrm{Cof}( #1)}
\renewcommand{\div}{\mathrm{div}}
\newcommand{\tr}{\mathrm{tr}\;}
\newcommand{\n}{\mathrm{n}}
\begin{document}

\title[Motion of a fluid-filled elastic solid]{Local well-posedness of the equations governing the motion of a fluid-filled elastic solid}

%%=============================================================%%
%% GivenName	-> \fnm{Joergen W.}
%% Particle	-> \spfx{van der} -> surname prefix
%% FamilyName	-> \sur{Ploeg}
%% Suffix	-> \sfx{IV}
%% \author*[1,2]{\fnm{Joergen W.} \spfx{van der} \sur{Ploeg} 
%%  \sfx{IV}}\email{iauthor@gmail.com}
%%=============================================================%%

\author*[1]{\fnm{Giusy} \sur{Mazzone}}\email{giusy.mazzone@queensu.ca}
%
%\author[2,3]{\fnm{Second} \sur{Author}}\email{iiauthor@gmail.com}
%\equalcont{These authors contributed equally to this work.}
%
%\author[1,2]{\fnm{Third} \sur{Author}}\email{iiiauthor@gmail.com}
%\equalcont{These authors contributed equally to this work.}

\affil*[1]{\orgdiv{Department of Mathematics and Statistics}, \orgname{Queen's University}, 
\\
\orgaddress{\street{48 University Ave}, \city{Kingston}, \postcode{K7L 3N6}, \state{ON}, \country{Canada}}}
%
%\affil[2]{\orgdiv{Department}, \orgname{Organization}, \orgaddress{\street{Street}, \city{City}, \postcode{10587}, \state{State}, \country{Country}}}
%
%\affil[3]{\orgdiv{Department}, \orgname{Organization}, \orgaddress{\street{Street}, \city{City}, \postcode{610101}, \state{State}, \country{Country}}}

%%==================================%%
%% Sample for unstructured abstract %%
%%==================================%%

\abstract{We consider the fluid-structure interaction problem of a viscous incompressible fluid contained in an elastic solid whose motion is not prescribed. The equations governing the motion of the solid are given by the Navier equations of linear elasticity, whereas the fluid motion is described by the Navier-Stokes equations. We prove that the governing equations admit a unique strong solution corresponding to non-zero initial data for the solid initial displacement and velocity, and for a fluid initial velocity in $H^{5/2}$.}

\keywords{fluid-structure interaction problems, well-posedness, hyperbolic-parabolic equations, Navier equations, Navier-Stokes equations}

%%\pacs[JEL Classification]{D8, H51}
\pacs[MSC Classification]{35Q30 %Navier-Stokes equations
\sep 35Q35 %PDEs in connection with fluid mechanics 
\sep 76D05 %Navier-Stokes equations for incompressible viscous fluids
\sep 74B05 %Classical linear elasticity
\sep 74B10 %Linear elasticity with initial stresses
\sep 35K61 %Nonlinear initial, boundary and initial-boundary value problems for nonlinear parabolic equations
\sep 35L20 %Initial-boundary value problems for second-order hyperbolic equations
}

\maketitle

\section{Introduction}
We consider the motion of an elastic solid with an interior cavity completely filled with a viscous Newtonian fluid. The equations governing the motion of the  fluid-solid system are given by the coupling of the {\em Navier equations} of linear elasticity for the solid displacement, and the {\em Navier-Stokes equations} for the fluid velocity (see equations \eqref{eq:eom}). At the fluid-solid interface, we impose continuity of the velocity fields and continuity of the tractions. The outer solid boundary is subject to a traction-free boundary condition. We prove that the governing equations, in absence of external forces, admit a unique local (in time) strong solution corresponding to initial data in a low regularity class (see Theorem \ref{th:main}). 

Fluid-solid interactions problems like the one investigated in this paper have been widely studied for their applications in geophysics (\cite{dahlen20}) and in hemodynamics (\cite{quarteroni00}). From the mathematical point of view, the literature on fluid-solid interaction problems is vast. Any attempt to make a comprehensive literature review on this subject would never be completely satisfactory (we refer to \cite{canic21} where some of the aspects mentioned next are treated in some detail). We will mention only works closely related to the topic treated in this paper, and we will leave out many other works, just as important, on fluid-structure interaction problems involving --for example--  compressible fluids. We can classify the works on fluid-solid interaction problems into two broad categories: (a) those pertaining the motion of an elastic solid immersed in a viscous fluid (with fixed outer boundary); and (b) those regarding the motion of a fluid contained in a moving solid (with the motion of the solid not prescribed). For what concerns category (a), first results on the local well-posedness of the equations governing the motion of an elastic solid in a viscous fluid can be found in \cite{coutand05,coutand06} corresponding to initial data in high regularity class, and in \cite{kukavica09,kukavica12,raymond2014,boulakia19} for the case of initial data in low regularity class. Global existence of solutions when the elastic solid is modeled with a damped wave equation, and for small initial data, has been studied in \cite{kukavica14,kukavica17,kukavica23}. We would like to mention also \cite{kukavica18} for related results on similar fluid-structure interactions models, and \cite{BOCIU2020111837} for an optimal control problem aimed at minimizing the fluid vorticity in the case of a damped elastic body moving and deforming within the fluid. Concerning category (b), the majority of the available mathematical results investigates the interactions of a viscous fluid with a structure of lower dimension\footnote{For example, a two-dimensional fluid interacting with a viscoelastic string or a beam, or a three-dimensional fluid interacting with a plate or a Koiter shell. }. Local well-posedness results  can be traced back to the works \cite{daVeiga04,lequeurre11}. The existence of weak solutions has been investigated in \cite{chambolle05,grandmont08,padula08,muha13,lengeler14,muha15}. A global existence result for strong solutions has been proved in \cite{grandmont16}. Existence, stability and long-time behaviour of solutions to the equations governing the motion of a three-dimensional viscous fluid contained in a moving three-dimensional structure, when the structure is a rigid body, has been widely studied by the author of this paper and collaborators in the series of works \cite{DiGaMaZu,MaPrSi,MaPrSi19,gfree}. Existence of solutions for the equations of motions for a fluid-filled elastic solid is widely open. In this paper, we address the problem of existence and uniqueness of local (in time) solutions corresponding to initial data in the regularity class
\[
(u_0,u_1,v_0)\in H^{3}(\Omega_B)\times H^{3/2}(\Omega_B) \times H^{5/2}(\Omega_L), 
\]
where $u_0$ and $u_1$ are the initial displacement and displacement of the solid, respectively, whereas $v_0$ is the initial velocity of the fluid, and they satisfy suitable compatibility conditions (see Theorem \ref{th:main}). In addition, we prove a  results on continuous dependence upon the data (see Corollary \ref{cor:cont-dep-data}).

As for all of the above-mentioned fluid-structure interaction problems, the two main difficulties are given by the unknown motion of the boundary  
and the parabolic-hyperbolic nature of the partial differential equations describing the fluid-structure interaction. As a first step in our treatment, we rewrite the equations of motion in its Lagrangian description, and we then prove that the resulting equations admit a unique local (in time) strong solution. Apart from the solid displacement field and the fluid velocity field, another unknown of our problem is the system's deformation mapping (see equation \eqref{eq:dm}) that maps positions of material points in the chosen reference configuration to the current positions at time $t$. The strategy of the proof of the existence of solutions to the governing equations is based on two fixed point arguments. One in the fluid velocity, for a fixed deformation mapping, that involves solving a non-homogeneous Stokes problem subject to a prescribed condition on the Cauchy stress tensor on the fluid boundary (see \eqref{eq:fluid}), and the Navier equation for elasticity subject to a prescribed condition on the solid velocity on the inner boundary (at the interface with the fluid) and a traction-free condition on the outer boundary (see \eqref{eq:elastic}). The second fixed point argument is on the deformation mapping obtained by solving the flow map problem \eqref{eq:flow}. A similar strategy has been employed in \cite{boulakia19}. However, we wish to remark some important differences with the problem solved in \cite{boulakia19}. In the present paper, we are dealing with a different physical problem. In fact, for the problem at hand, no part of the fluid boundary is fixed and the solid is subject to mixed (and generalized) Neumann- and Dirichlet-type boundary conditions on different portions of the solid boundary. Another difference with \cite{boulakia19} is that we deal with an elastic solid which is {\em initially pre-stressed}, so that the initial displacement of the solid is non-zero (technically speaking, for our problem formulation, we use a ``generic'' reference configuration that does not coincide with the solid configuration at time $t=0$, see  Section \ref{sec:formulation}). Because of the compatibility conditions, the latter assumption requires an initial fluid velocity which is $1/2$ more regular than that assumed in \cite{boulakia19} (which, however, assumes that $u_0=0$). In turn, we obtain more regular solution. We wish to emphasize that, although we did not consider any external force applied to the fluid-solid system, the proof of existence of solutions would follow verbatim if the body forces are in a suitable regularity class (see Remark \ref{rm:forces}). 

We conclude this introduction with an outline of our paper. In Section \ref{sec:preliminaries}, we present some basic notation and useful inequalities that will be employed throughout the text.

In Section \ref{sec:formulation}, we present the equations of motion in their formulation from continuum mechanics, using the Lagrangian description of motion for the solid and the Eulerian description of motion for the fluid (see \eqref{eq:eom}). We next rewrite these equations using only Lagrangian variables (see \eqref{eq:eom-L}). 

In Section \ref{sec:main-th}, we state the main theorem of this paper (Theorem \ref{th:main}) and we outline the strategy of its proof with the two fixed point arguments mentioned above. 

Sections \ref{sec:proof-main0} and \ref{sec:proof-main} contain the proof of local well-posedness of the equations for a fixed deformation mapping (see equations \eqref{eq:eom-L-c-hat}-\eqref{eq:ic-hat}) and the proof of Theorem \ref{th:main} plus few concluding remarks. 

In Appendix \ref{ap:stokes-navier}, we revisit some classical results on the non-homogeneous Stokes problem (see Theorems \ref{th:nStokes} and \ref{th:nStokes_r}), and some regularity results on the Navier equations of elasticity (see Theorem \ref{th:N}). Notably, this latter result is important because it exploits the ``hidden regularity'' shown for similar types of hyperbolic equations in \cite{LLT}. This hidden regularity is just what is needed to make sense of the continuity of the stresses at the fluid-solid interface for the coupled problem. 

\section{Notation and useful inequalities}\label{sec:preliminaries}
We use Einstein summation convention for repeated indices. 

Let $D$ be a domain in $\R^d$, $d\in \N\setminus\{0\}$, and $p\in [1,\infty]$. We denote with $L^p(D)$ and $W^{k,p}(D)$ the Lebesgue and Sobolev spaces endowed with the norms 
\[
\norm{w}_{L^p(D)}:=\left(\int_D |w|^p\;d x\right)^{1/p}\quad\text{and}\quad 
\norm{w}_{W^{k,p}(D)}:=\left(\sum_{|\alpha|\le k}\norm{D^\alpha w}^p_{L^p(D)}\right)^{1/p},
\]
respectively. In the above, we have denoted by $|\cdot|$ the Euclidean norm of vector fields (we will use the same symbol for the Frobenius norm of tensor fields). Also, $D^\alpha$ denotes the $\alpha$-th order weak derivative of $w$. If $k\notin \N$, then $W^{k,p}(D):=B^k_{pp}(D)$. We recall the following characterization of Besov spaces $B^s_{qp}(D)=(H^{s_0}_q(D),H^{s_1}_q(D))_{\theta,p}$ as real interpolation of Bessel potential spaces, and of Bessel potential spaces $H^s_q(D)=[H^{s_0}_q(D),H^{s_1}_q(D)]_{\theta}$. These characterizations are valid for $s_0\ne s_1\in \R$, $p,q\in [1, \infty)$, $\theta\in (0,1)$ and $s=(1-\theta)s_0+\theta s_1$. We also recall that $W^{s,2}(D) =B^s_{22}(D) = H^s_2(D)=:H^s(D)$ (see \cite{bergh}). For every $s>1/2$, $H^s_0(D):=\{f\in H^s(D):\;f=0\text{ on }\partial D\}$. 

If $(X, \norm{\cdot}_X)$ is a Banach space and %for an interval $I\subset\R$ and 
$p\in[1,\infty)$, $L^p(0,T;X)$ (resp. $W^{k,p}(0,T;X)$, $k\in \N$) denotes the space of strongly measurable functions $f$ from $[0,T]$ to $X$ for which $\left(\int_0^T \norm{f(t)}^p_X\; d t\right)^{1/p}<\infty$ (resp. $\sum^k_{\ell=0}\left(\int_0^T \norm{\partial^\ell_t f (t)}^p_X\; d t\right)^{1/p}<\infty$). Similarly, $C^k([0,T];X)$ indicates the space of functions which are $k$-times differentiable with values in $X$, and having $\max_{t\in [0,T]}\norm{\partial^\ell_t \cdot}_X < \infty$, for all $\ell = 0,1,...,k$. 

For an open set $A\subset \R^d$ and an interval $I\subset \R$, and for $s\in \R$, we also denote $H^{s,s}(A\times I):=L^2(I;H^s(A))\cap H^s(I;L^2(A))$ endowed with the norm 
\[
\norm{f}_{H^{s,s}(A\times I)}:=\norm{f}_{L^2(I;H^s(A))}+\norm{f}_{H^s(I;L^2(A))}. 
\] 

In our proofs, we may silently use the following two results (whose proof can be found in \cite[Appendix A]{boulakia19}). 

\begin{lemma}\label{lem:interpolation}
Let $m_1$, $m_2\in [0,\infty)$, $A\subset\R^3$ be a bounded smooth domain and $T\in (0,1]$. \begin{itemize}
\item[(I1)] If $f\in L^2(0,T;H^{m_1}(A))\cap H^1(0,T;H^{m_2}(A))$, then $f\in H^\theta(0,T;H^m(A))$ for all $\theta\in [0,1]$ and $m=(1-\theta)m_1+\theta m_2$. In addition, there is a constant $C_1>0$, independent of $T$, such that 
\[
\norm{f}_{H^\theta(0,T;H^m(A))}\le C_1\norm{f}_{L^2(0,T;H^{m_1}(A))}^{1-\theta}\norm{f}_{H^1(0,T;H^{m_2}(A))}^\theta.
\]
\item[(I2)] If $f\in H^1(0,T;H^{m_1}(A))\cap H^2(0,T;H^{m_2}(A))$, then $f\in H^\theta(0,T;H^m(A))$ for all $\theta\in [1,2]$ and $m=(2-\theta)m_1+(\theta-1)m_2$. In addition, there is a constant $C_2>0$, independent of $T$, such that 
\[
\norm{f}_{H^\theta(0,T;H^m(A))}\le C_2\norm{f}_{H^1(0,T;H^{m_1}(A))}^{2-\theta}\norm{f}_{H^2(0,T;H^{m_2}(A))}^{\theta-1}.
\]
\end{itemize}
\end{lemma}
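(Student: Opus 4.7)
The plan is to reduce both (I1) and (I2) to the standard vector-valued interpolation identity
\[
\bigl[L^2(\R;H^{m_1}(A)),\,H^1(\R;H^{m_2}(A))\bigr]_{\theta}=H^{\theta}\bigl(\R;H^{m}(A)\bigr), \qquad m=(1-\theta)m_1+\theta m_2,
\]
valid on the whole real line (see Bergh--L\"ofstr\"om or Lions--Magenes), which combines two facts: complex interpolation of Bessel potential spaces in the second variable gives $[H^{m_1},H^{m_2}]_{\theta}=H^{m}$, and the corresponding interpolation of scalar-valued time-regularity is also a Bessel potential space. The single nontrivial point is that the constants in (I1)--(I2) must be independent of $T\in(0,1]$, which forces the use of extension and restriction operators whose norms do not depend on $T$.

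First I would construct, for each $T\in(0,1]$, a bounded extension operator $E_T$ from the time interval $(0,T)$ to $\R$. The standard Seeley higher-order reflection followed by multiplication with a fixed cutoff $\chi\in C^\infty_c(\R)$ satisfying $\chi\equiv 1$ on $[0,1]\supset(0,T)$ and $\mathrm{supp}\,\chi\subset(-1,2)$ works well here, because it preserves simultaneously the $L^2$, $H^1$ and (for (I2)) $H^2$ time regularities. A direct computation gives
\[
\norm{E_Tf}_{L^2(\R;H^{m_1}(A))}\le C\norm{f}_{L^2(0,T;H^{m_1}(A))},\quad \norm{E_Tf}_{H^1(\R;H^{m_2}(A))}\le C\norm{f}_{H^1(0,T;H^{m_2}(A))},
\]
with $C$ independent of $T\in(0,1]$ precisely because the reflection distance and cutoff support are fixed. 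Applying the interpolation identity above to $E_Tf$ produces the desired estimate on $\R$, and restriction back to $(0,T)$ is trivially bounded with constant one, which yields (I1).

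For (I2), I would simply differentiate in time and apply (I1) to $g:=\partial_t f$. Under the hypotheses of (I2), $g\in L^2(0,T;H^{m_1}(A))\cap H^1(0,T;H^{m_2}(A))$, with norms controlled by $\norm{f}_{H^1(0,T;H^{m_1})}$ and $\norm{f}_{H^2(0,T;H^{m_2})}$ respectively. Applying (I1) to $g$ with parameter $\theta':=\theta-1\in[0,1]$ gives $g\in H^{\theta-1}(0,T;H^{m}(A))$ with the same $m=(1-\theta')m_1+\theta'm_2=(2-\theta)m_1+(\theta-1)m_2$ demanded in (I2). Combining this with the bound on $f$ in $H^1(0,T;H^{m_1}(A))$ (which controls $f(0)$) recovers $f\in H^\theta(0,T;H^m(A))$ with the asserted multiplicative form of the constant after minimizing the elementary inequality $ab\le\frac{1}{p}a^p+\frac{1}{q}b^q$ applied at the interpolation exponent.

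The main obstacle is precisely the $T$-uniformity. A naive rescaling $t\mapsto t/T$ produces factors of $T$ in the time derivatives and hence spoils the constant. The extension-by-reflection device circumvents this by sending every $T\in(0,1]$ into a single fixed ambient interval on which the classical interpolation theorem applies once and for all; this is the only delicate ingredient, the remainder being a mechanical combination of standard abstract interpolation with the identification of $[H^{m_1},H^{m_2}]_\theta$ with $H^m$ on the spatial domain $A$.
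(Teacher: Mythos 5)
Your plan (extend $f$ from $(0,T)$ to the full line, apply the $\R$-based interpolation identity $[L^2(\R;H^{m_1}),H^1(\R;H^{m_2})]_\theta=H^\theta(\R;H^m)$, restrict back) is the natural route, but it breaks precisely at the step you flag as "the only delicate ingredient": there is no linear extension operator $E_T$ from $(0,T)$ to $\R$ that is simultaneously bounded from $L^2(0,T;X)$ to $L^2(\R;X)$ and from $H^1(0,T;X)$ to $H^1(\R;X)$ with a constant independent of $T\in(0,1]$. Take $f\equiv1$ on $(0,T)$; then $\norm{f}_{L^2(0,T)}=\norm{f}_{H^1(0,T)}=T^{1/2}$, so $g:=E_Tf$ would have $\norm{g}_{H^1(\R)}\le CT^{1/2}$ uniformly in $T$, and the embedding $H^1(\R)\hookrightarrow L^\infty(\R)$ would then give $\norm{g}_{L^\infty(\R)}\le C'T^{1/2}$, contradicting $g|_{(0,T)}\equiv1$ as soon as $T<(C')^{-2}$. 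The sentence "with $C$ independent of $T\in(0,1]$ precisely because the reflection distance and cutoff support are fixed" is therefore not correct: a Seeley reflection across the endpoints of the finite interval $(0,T)$ produces a smooth extension only on a neighbourhood of $(0,T)$ of size proportional to $T$, so either the cutoff must shrink with $T$ (reintroducing factors $\sim T^{-1}$ in the derivative estimates) or the fixed cutoff multiplies the zero-extension across the non-smooth edge of the Seeley domain and the product is not in $H^1(\R)$ at all.

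Since (I1) is nevertheless true (it holds with a universal constant already for $f\equiv1$), the $T$-uniformity must come from somewhere other than a global $T$-uniform extension — for instance by subtracting the trace $f(\cdot,0)$ and exploiting a Poincar\'e-type bound such as $\norm{f-f(\cdot,0)}_{L^2(0,T;X)}\lesssim T\norm{\partial_tf}_{L^2(0,T;X)}$ to absorb the $T^{-1}$ produced by the rescaling $t\mapsto t/T$, the constant part being handled by the spatial interpolation $\norm{f(\cdot,0)}_{H^m}\le\norm{f(\cdot,0)}_{H^{m_1}}^{1-\theta}\norm{f(\cdot,0)}_{H^{m_2}}^{\theta}$. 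The same caveat applies to your reduction of (I2) to (I1): reconstructing $f\in H^\theta(0,T;H^m)$ from $\partial_tf\in H^{\theta-1}$ again requires a $T$-uniform treatment of the initial trace, which is not automatic and needs to be made explicit. Note also that the paper does not supply its own proof of this lemma; it refers to \cite[Appendix A]{boulakia19}, and that is the argument you should compare your attempt with.
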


\begin{lemma}\label{lem:tsigma-s}
Let $\sigma\in (1/2,1]$, $s\in [0,\sigma]$, $T\in(0,1]$, and $\mathcal X$ a Banach space. Then, there exists a constant $C>0$ independent of $T$ such that 
\[
\norm{f}_{H^s(0,T;\mathcal X)}\le CT^{\sigma -s}\norm{f}_{H^\sigma(0,T;\mathcal X)}\qquad\text{for all }f\in {}_0H^\sigma(0,T;\mathcal X),
\]
where ${}_0H^\sigma(0,T;\mathcal X):=\{f=f(x,t)\in H^\sigma(0,T;\mathcal X):\; f(\cdot,0)=0\}$ .  
\end{lemma}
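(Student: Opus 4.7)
\medskip\noindent\emph{Proof plan.} My plan is to reduce the statement to two steps. First, I will establish the endpoint case $s=0$, namely $\norm{f}_{L^2(0,T;\mathcal X)}\le CT^\sigma\norm{f}_{H^\sigma(0,T;\mathcal X)}$ for every $f\in{}_0H^\sigma(0,T;\mathcal X)$; then I will upgrade this to the general range $s\in[0,\sigma]$ by operator interpolation. The $T$-dependence has to be tracked carefully at every step so that no hidden factors of $T$ enter the final constant.

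For the endpoint case, $\sigma=1$ is elementary: since $f(0)=0$, applying Cauchy--Schwarz to $f(t)=\int_0^t f'(r)\,dr$ gives $\norm{f(t)}_{\mathcal X}\le t^{1/2}\norm{f'}_{L^2(0,t;\mathcal X)}$, which integrated over $t\in(0,T)$ yields $\norm{f}_{L^2(0,T;\mathcal X)}\le (T/\sqrt{2})\norm{f}_{H^1(0,T;\mathcal X)}$. For $\sigma\in(1/2,1)$ I will rescale to the unit interval via $g(\tau):=f(T\tau)$, $\tau\in[0,1]$, which preserves the trace condition $g(0)=0$; a direct change of variables in the Slobodeckij seminorm produces the scaling identities
\[
\norm{g}_{L^2(0,1;\mathcal X)}^2=T^{-1}\norm{f}_{L^2(0,T;\mathcal X)}^2,\qquad [g]_{H^\sigma(0,1;\mathcal X)}^2=T^{2\sigma-1}[f]_{H^\sigma(0,T;\mathcal X)}^2.
\]
On the fixed unit interval the key ingredient is a $T$-independent Poincaré-type inequality $\norm{g}_{L^2(0,1;\mathcal X)}\le C[g]_{H^\sigma(0,1;\mathcal X)}$ for $g$ with $g(0)=0$, which is available because $\sigma>1/2$ makes the trace at $0$ a well defined continuous functional; it is then provable by the standard compactness-contradiction argument using the compact embedding $H^\sigma\hookrightarrow L^2$ and the weak continuity of the trace. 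Combining this inequality with the scaling identities above and bounding $[f]_{H^\sigma(0,T;\mathcal X)}\le \norm{f}_{H^\sigma(0,T;\mathcal X)}$ produces the endpoint bound.

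Given the endpoint case, the general range $s\in(0,\sigma)$ follows from operator interpolation applied to the identity map: as an operator ${}_0H^\sigma(0,T;\mathcal X)\to L^2(0,T;\mathcal X)$ it has norm at most $CT^\sigma$, while as an operator ${}_0H^\sigma(0,T;\mathcal X)\to H^\sigma(0,T;\mathcal X)$ it has norm at most $1$; using the identification $[L^2(0,T;\mathcal X),H^\sigma(0,T;\mathcal X)]_{s/\sigma}=H^s(0,T;\mathcal X)$ together with the classical interpolation inequality for operator norms gives $(CT^\sigma)^{1-s/\sigma}\cdot 1^{s/\sigma}=CT^{\sigma-s}$ as the norm of the identity into $H^s(0,T;\mathcal X)$, which is exactly the claim. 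The main obstacle is one of bookkeeping: both the Poincaré constant on the unit interval and the constant from the interpolation theorem are absolute, so the rescaling to the fixed unit interval is what ensures that the entire $T$-dependence is carried by the explicit factor $T^{\sigma-s}$ rather than by some implicit constant.
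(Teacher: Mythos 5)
The paper does not give its own proof of this lemma: it is taken from \cite[Appendix~A]{boulakia19}, so there is no in-paper argument to compare yours against. Your overall plan --- rescale to the unit interval, prove a fractional Poincar\'e inequality at the endpoint $s=0$, and then cover the intermediate range by interpolation --- is a sensible and workable route to this statement. However, there is one genuine gap and one point you should not leave as a bare assertion.

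The gap is in your proof of the Poincar\'e inequality $\norm{g}_{L^2(0,1;\mathcal X)}\le C[g]_{H^\sigma(0,1;\mathcal X)}$ on the unit interval. You appeal to a compactness--contradiction argument based on the ``compact embedding $H^\sigma\hookrightarrow L^2$,'' but for a general Banach space $\mathcal X$ the embedding $H^\sigma(0,1;\mathcal X)\hookrightarrow L^2(0,1;\mathcal X)$ is \emph{not} compact when $\dim\mathcal X=\infty$: take any fixed nonzero scalar $\phi\in H^\sigma(0,1)$ with $\phi(0)=0$ and set $g_n(\tau)=\phi(\tau)\,e_n$ for an orthonormal sequence $(e_n)$ in a Hilbert space $\mathcal X$; then $(g_n)$ is bounded in $H^\sigma(0,1;\mathcal X)$ but has no $L^2$-convergent subsequence. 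So the contradiction argument cannot be run. Fortunately the inequality is still true and admits a short direct proof that avoids compactness altogether: since $\sigma>1/2$ one has the Banach-valued H\"older embedding $H^\sigma(0,1;\mathcal X)\hookrightarrow C^{0,\sigma-1/2}([0,1];\mathcal X)$ with a bound of the form $\norm{g(\tau)-g(\tau')}_{\mathcal X}\le c\,|\tau-\tau'|^{\sigma-1/2}[g]_{H^\sigma(0,1;\mathcal X)}$, so using $g(0)=0$ gives $\norm{g(\tau)}_{\mathcal X}\le c\,[g]_{H^\sigma(0,1;\mathcal X)}$ pointwise and hence the $L^2$ bound. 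You should replace the compactness step with this.

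The second point concerns the interpolation step: you invoke the identification $\bigl[L^2(0,T;\mathcal X),H^\sigma(0,T;\mathcal X)\bigr]_{s/\sigma}=H^s(0,T;\mathcal X)$ and assert that the constant in this equivalence is ``absolute.'' That is exactly the kind of claim that, in a lemma whose entire content is the sharp $T$-dependence, should not be left implicit: the equivalence on the interval $(0,T)$ is usually established via extension to $\R$, and the extension operator's norm can depend on $T$. The cleanest way to sidestep this is simply to push the rescaling through the intermediate range as well. After setting $g(\tau)=f(T\tau)$ one finds $[f]_{H^s(0,T;\mathcal X)}=T^{1/2-s}[g]_{H^s(0,1;\mathcal X)}$, $[f]_{H^\sigma(0,T;\mathcal X)}=T^{1/2-\sigma}[g]_{H^\sigma(0,1;\mathcal X)}$ and $\norm{f}_{L^2(0,T;\mathcal X)}=T^{1/2}\norm{g}_{L^2(0,1;\mathcal X)}$, so the claimed inequality $[f]_{H^s(0,T)}\le CT^{\sigma-s}\norm{f}_{H^\sigma(0,T)}$ reduces (after dividing by $T^{1/2-s}$ and using $T\le1$) to the $T$-free embedding $[g]_{H^s(0,1;\mathcal X)}\le C\norm{g}_{H^\sigma(0,1;\mathcal X)}$ on the fixed unit interval, which is elementary. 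Combined with your (corrected) endpoint bound for the $L^2$ piece, this closes the proof without invoking any operator interpolation and keeps every $T$ visible.
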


In addition, we may silently use the following lemma which is a consequence of H\"older's inequality.
\begin{lemma}\label{lem:holder-st}
Let $p,q,r\in [1,+\infty]$, $s_1, s_2, s\in [0,\infty)$, and $\theta\in [0,1]$ satisfying the following conditions
\[
\frac 1r=\frac{\theta}{p}+\frac{1-\theta}{q},\qquad\qquad s=\theta s_1+(1-\theta)s_2.
\]
Then, there exists a constant $C>0$, independent of $T$, such that any $f\in L^p(0,T;H^{s_1}(A))\cap L^q(0,T;H^{s_2}(A))$ also satisfies $f\in L^r(0,T;H^s(A))$ with norm 
\[
\norm{f}_{L^r(0,T;H^s(A))}\le C\norm{f}_{L^p(0,T;H^{s_1}(A))}^\theta\norm{f}_{L^q(0,T;H^{s_2}(A))}^{1-\theta},
\]
where $T\in (0,+\infty)$ and $A\subset\R^3$ is a bounded smooth domain. 
\end{lemma}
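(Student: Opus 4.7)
The plan is to combine a pointwise (in $t$) spatial interpolation inequality for Bessel potential spaces with H\"older's inequality in the time variable. The crucial ingredient is the complex interpolation identity recalled in Section \ref{sec:preliminaries}, which in our setting reads $H^s(A)=[H^{s_1}(A),H^{s_2}(A)]_{1-\theta}$, since $s=(1-(1-\theta))s_1+(1-\theta)s_2$. When $s_1=s_2$ the conclusion is immediate from H\"older in time, so we may assume $s_1\ne s_2$.

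First I would invoke the general estimate satisfied by the complex interpolation functor, namely that for $g\in H^{s_1}(A)\cap H^{s_2}(A)$ one has $\norm{g}_{[H^{s_1},H^{s_2}]_{1-\theta}}\le \norm{g}_{H^{s_1}}^{\theta}\norm{g}_{H^{s_2}}^{1-\theta}$. Applied pointwise in time to $g=f(\cdot,t)$, and combined with the equivalence of norms on $[H^{s_1}(A),H^{s_2}(A)]_{1-\theta}$ and $H^s(A)$ (valid on a bounded smooth domain), this gives, for a.e. $t\in(0,T)$,
\[
\norm{f(\cdot,t)}_{H^s(A)}\le C\,\norm{f(\cdot,t)}_{H^{s_1}(A)}^{\theta}\,\norm{f(\cdot,t)}_{H^{s_2}(A)}^{1-\theta},
\]
where $C>0$ depends only on $A$, $s_1$, $s_2$ and $\theta$ and, crucially, is independent of $T$.

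Next I would raise this bound to the $r$-th power, integrate over $(0,T)$, and apply H\"older's inequality in time to the factored integrand with the conjugate exponents $p/(\theta r)$ and $q/((1-\theta) r)$. These exponents are admissible: from the hypothesis $\tfrac{1}{r}=\tfrac{\theta}{p}+\tfrac{1-\theta}{q}$ one gets
\[
\frac{\theta r}{p}+\frac{(1-\theta)r}{q}=1.
\]
Extracting the $r$-th root then yields the desired estimate, with the same constant $C$.

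The boundary cases $\theta\in\{0,1\}$ are trivial, and the endpoint cases $p=\infty$, $q=\infty$ or $r=\infty$ are handled by replacing the corresponding integrals with essential suprema and using the convention that the corresponding H\"older exponent is taken to be infinite. There is no substantive obstacle here: the proof is essentially an exercise in combining a standard pointwise interpolation inequality in space with H\"older's inequality in time. The only mild subtlety, worth verifying carefully, is that the $T$-independence of the constant is preserved because the spatial interpolation is performed pointwise in $t$, and that the pair $(p/(\theta r),q/((1-\theta)r))$ remains a valid conjugate pair for the full admissible range of $r\in[1,\infty]$.
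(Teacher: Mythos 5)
Your proposal is correct and matches the argument the paper has in mind: the paper merely notes the lemma is ``a consequence of H\"older's inequality,'' and the natural proof is exactly the one you give, namely the pointwise-in-time log-convexity estimate $\norm{f(\cdot,t)}_{H^s(A)}\le C\norm{f(\cdot,t)}_{H^{s_1}(A)}^{\theta}\norm{f(\cdot,t)}_{H^{s_2}(A)}^{1-\theta}$ coming from the complex interpolation identity $H^s(A)=[H^{s_1}(A),H^{s_2}(A)]_{1-\theta}$, followed by H\"older in $t$ with exponents $p/(\theta r)$ and $q/((1-\theta)r)$. Your verification that these exponents are admissible (i.e.\ $\theta r\le p$ and $(1-\theta)r\le q$ follow from $1/r=\theta/p+(1-\theta)/q$) and that the constant inherits $T$-independence from the purely spatial interpolation step is precisely the content one would spell out.
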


Next lemma is a particular case of \cite[Lemma A.7]{boulakia19}. 
\begin{lemma}\label{lem:ba7}
Let and $A\subset\R^3$ be a bounded smooth domain, $s\in[0,1/2]$, $\sigma\in (1/2,1]$, $T\in (0,1]$, and $p,q\in [0,\infty]$ such that 
\[
\frac 1p + \frac 1q = \frac 12.
\]
Then, there exists a constant $C>0$, independent of $T$, such that 
\begin{multline*}
\norm{fg}_{H^s(0,T;L^2(A))}\le CT^{\sigma-s-1/2}\norm{f}_{H^s(0,T;L^p(A))}\norm{g}_{H^\sigma(0,T;L^q(A))}
\\
+\norm{f}_{H^s(0,T;L^p(A))}\norm{g(0)}_{L^q(A)},
\end{multline*}
for all $f\in H^s(0,T;L^p(A))$ and $g\in H^\sigma(0,T;L^q(A))$. 
\end{lemma}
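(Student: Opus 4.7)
The plan is to split $g$ about its initial trace: set $h := g-g(\cdot,0)$, so that $h(\cdot,0)=0$ and $\norm{h}_{H^\sigma(0,T;L^q(A))}\le C\norm{g}_{H^\sigma(0,T;L^q(A))}$ uniformly in $T\in(0,1]$. Then $fg = f\,g(0) + f\,h$, and we estimate the two pieces separately: $f\,g(0)$ will produce the second summand of the claim, while $f\,h$ will produce the $T^{\sigma-s-1/2}$-gain summand.

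For $f\,g(0)$: since $g(0)$ has no time dependence, the spatial H\"older inequality (with $1/p+1/q=1/2$) applies pointwise in $t$ as well as to time differences, so $g(0)$ factors out of both the $L^2$-in-time part and the Slobodeckij seminorm defining the $H^s(0,T;L^2(A))$ norm, yielding
\[
\norm{f\,g(0)}_{H^s(0,T;L^2(A))} \le \norm{f}_{H^s(0,T;L^p(A))}\,\norm{g(0)}_{L^q(A)}.
\]
This provides the second summand of the claim, with no $T$-factor.

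For $f\,h$, the key ingredient is the refined embedding
\[
\norm{h}_{L^\infty(0,T;L^q(A))} \le C\,T^{\sigma-1/2}\,\norm{h}_{H^\sigma(0,T;L^q(A))}\qquad(\sigma>1/2,\ h(0)=0),
\]
with $C$ independent of $T\in(0,1]$; this follows by rescaling $[0,T]$ to $[0,1]$ and using Hardy's inequality (available precisely because $h(0)=0$) to absorb the $L^2$-in-time part of the $H^\sigma$ norm into its seminorm. Combined with spatial H\"older, it yields the $s=0$ case of the claim directly:
\[
\norm{f h}_{L^2(0,T;L^2(A))} \le \norm{f}_{L^2(0,T;L^p(A))}\,\norm{h}_{L^\infty(0,T;L^q(A))} \le C\,T^{\sigma-1/2}\,\norm{f}_{H^s(0,T;L^p(A))}\,\norm{h}_{H^\sigma(0,T;L^q(A))}.
\]
For $s\in(0,1/2]$, one expands the Slobodeckij seminorm of $fh$ via the splitting $f(t)h(t)-f(\tau)h(\tau) = (f(t)-f(\tau))h(t) + f(\tau)(h(t)-h(\tau))$: the first term is again controlled by $\norm{h}_{L^\infty(0,T;L^q(A))}$ as above, while the second uses the time H\"older embedding $H^\sigma(0,T)\hookrightarrow C^{\sigma-1/2}([0,T])$ applied to $h$, together with the local integrability of $|t-\tau|^{-(2+2s-2\sigma)}$ on $[0,T]^2$.

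The main obstacle is the borderline regime $s+1/2\to\sigma^-$, where the singular kernel in this second Slobodeckij piece becomes only marginally integrable and the naive splitting does not close. The cleanest workaround is to combine the $s=0$ estimate above with a complementary upper endpoint bound (for instance via a fractional Leibniz rule on $\partial_t^\sigma(fh)$) and then obtain the full range $s\in(0,1/2]$ by real interpolation of $H^s(0,T;L^2(A))$ between $L^2$ and the higher endpoint, with the resulting $T$-power convex-combining to $T^{\sigma-s-1/2}$; keeping all constants independent of $T$ throughout requires the same $[0,T]\to[0,1]$ rescaling used for the refined embedding. Since the present statement is a special case of the more general product estimate \cite[Lemma~A.7]{boulakia19}, one may alternatively just appeal to that reference.
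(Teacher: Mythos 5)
The paper does not actually prove Lemma~\ref{lem:ba7}: the line immediately preceding the statement reads ``Next lemma is a particular case of [Lemma A.7 of Boulakia--Guerrero--Takahashi],'' and that citation is the whole of the paper's justification. So your self-contained sketch is a genuinely different route, and your closing remark that ``one may alternatively just appeal to that reference'' is in fact what the paper does.

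As a direct proof, though, your sketch has a concrete gap that you correctly flag but do not close. The splitting $g = g(0) + h$, the spatial H\"older step for $f\,g(0)$, and the scale-invariant refined embedding $\norm{h}_{L^\infty(0,T;L^q)}\le C\,T^{\sigma-1/2}\norm{h}_{H^\sigma(0,T;L^q)}$ for $h(0)=0$ are all fine, and your Slobodeckij estimate for $f\,h$ closes when the kernel exponent $2+2s-2\sigma$ is strictly less than $1$, i.e.\ when $s<\sigma-\tfrac12$. But the lemma's stated range allows $s\in[0,\tfrac12]$ and $\sigma\in(\tfrac12,1]$, so for example $s=\tfrac12$, $\sigma=\tfrac34$ falls squarely in $s\ge\sigma-\tfrac12$, where the inner integral $\int_0^T|t-\tau|^{-(2+2s-2\sigma)}\,dt$ diverges and the naive splitting simply does not produce a bound. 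Your proposed repair --- interpolate between $s=0$ and ``a complementary upper endpoint bound via a fractional Leibniz rule on $\partial_t^\sigma(fh)$'' --- is not an argument yet: you never state the endpoint estimate, and since the target inequality is a sum of two terms with different $T$-weights ($T^{\sigma-s-1/2}$ on one summand, $T^0$ on the other), real interpolation of the map $(f,g)\mapsto fg$ does not automatically convex-combine the two $T$-powers separately. (It is worth noting that every application of this lemma inside the paper --- $s=0,\sigma=3/4$ and $s=1/4,\sigma=1$ --- happens to lie in the easy regime $s<\sigma-\tfrac12$, so your argument would in fact suffice for the paper's purposes; but it does not establish the lemma as stated.) Either supply the missing endpoint estimate and carry out the interpolation with care for the two-term right-hand side, or, as the paper does, defer to Lemma A.7 of Boulakia--Guerrero--Takahashi.
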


The following proposition is an immediate consequence of 
the fact that, for $s$ satisfying \eqref{eq:trace-s} below, $B^s_{pp}(A)$ is the set of the (time-)traces at $t=0$ of functions in 
\[
L^p(0,\infty;H^{s_1}_p(A)) \cap W^{1,p}(0,\infty; H^{s_0}_p(A))).  
\]
In addition, the following two embeddings 
\[\begin{split}
&L^p(0,\infty;H^{s_1}_p(A)) \cap W^{1,p}(0,\infty; H^{s_0}_p(A)))  \hookrightarrow C_b([0,\infty);B^s_{pp}(A)),
\\
&L^p(0,T;H^{s_1}_p(A)) \cap {_0}W^{1,p}(0,T; H^{s_0}_p(A))) \hookrightarrow {_0}C_b([0,T];B^s_{pp}(A))
\end{split}\]
are valid for $0\le s_0< s_1\in \R$, $p\in (1, \infty)$, and with $s=\frac{s_0}{p}+\left(1-\frac 1p\right)s_1$. For the first claim and for the first embedding, use \cite[Proposition 1.13 \& Corollary 1.14]{MR3753604} (with $X=H^{s_0}_p(A)$ and $Y=H^{s_1}_p(A)$ together with the definition of Bessel potentials given above). For the second one, see \cite[Theorem 4.2]{MR2863860}. Note that the notation ``${_0}W$'' (respectively, ``${_0}C$'') stands for the set of functions for which $u(\cdot,0)=0$ in $A$. For the second embedding, the embedding constant is independent of $T$. 

\begin{proposition}\label{prop:traceat0}
Let $A\subset \R^3$ be a bounded smooth domain and $T\in (0,\infty)$. Suppose that $0\le s_0< s_1\in \R$, $p\in (1, \infty)$, and take 
\begin{equation}\label{eq:trace-s}
s=\frac{s_0}{p}+\left(1-\frac 1p\right)s_1.
\end{equation}
If $w\in L^p(0,T;H^{s_1}_p(A)) \cap W^{1,p}(0,T; H^{s_0}_p(A)))$ and $w(\cdot,0)\in B^s_{pp}(A)$, then 
\[
w\in C([0,T];B^s_{pp}(A)),
\]
and there exists a constant $C>0$, independent of $T$, such that 
\begin{multline*}
\sup_{t\in [0,T]}\norm{w(\cdot,t)}_{B^s_{pp}(A)}\le C\left(\norm{w}_{W^{1,p}(0,T; H^{s_0}_p(A)))\cap L^p(0,T;H^{s_1}_p(A))}
%\right.
%\\
%\left.
+\norm{w(\cdot,0)}_{B^s_{pp}(A)}\right).
\end{multline*}
\end{proposition}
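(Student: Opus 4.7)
The plan is to split $w$ as $w = v + \tilde w$, where $v$ is an extension of $w(\cdot,0)$ to $[0,\infty)$ that lives in $L^p(0,\infty;H^{s_1}_p(A)) \cap W^{1,p}(0,\infty; H^{s_0}_p(A))$, and $\tilde w := w - v|_{[0,T]}$ vanishes at $t=0$. This way each summand lands in a space for which a continuity-in-time embedding is already recorded in the paragraph preceding the statement, and the $T$-independence of the final estimate is preserved.

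First, I would invoke the trace characterization recalled above: since $w(\cdot,0) \in B^s_{pp}(A)$ and the trace operator $u \mapsto u(\cdot,0)$ from $L^p(0,\infty;H^{s_1}_p(A)) \cap W^{1,p}(0,\infty; H^{s_0}_p(A))$ onto $B^s_{pp}(A)$ admits a bounded right inverse, there exists $v$ in that intersection with $v(\cdot,0)=w(\cdot,0)$ and
\[
\|v\|_{L^p(0,\infty; H^{s_1}_p(A))} + \|v\|_{W^{1,p}(0,\infty; H^{s_0}_p(A))} \le C\|w(\cdot,0)\|_{B^s_{pp}(A)},
\]
with $C$ depending only on $A,p,s_0,s_1$ (in particular, independent of $T$). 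The first embedding recalled before the statement then gives $v \in C_b([0,\infty); B^s_{pp}(A))$ with
\[
\sup_{t\ge 0}\|v(\cdot,t)\|_{B^s_{pp}(A)} \le C\|w(\cdot,0)\|_{B^s_{pp}(A)}.
\]

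Next, set $\tilde w := w - v|_{[0,T]}$. By construction $\tilde w(\cdot,0)=0$, so $\tilde w \in L^p(0,T;H^{s_1}_p(A)) \cap {}_0W^{1,p}(0,T;H^{s_0}_p(A))$, and the triangle inequality yields
\[
\|\tilde w\|_{L^p(0,T;H^{s_1}_p(A)) \cap W^{1,p}(0,T;H^{s_0}_p(A))} \le \|w\|_{L^p(0,T;H^{s_1}_p(A)) \cap W^{1,p}(0,T;H^{s_0}_p(A))} + C\|w(\cdot,0)\|_{B^s_{pp}(A)}.
\]
The second (zero-initial-trace) embedding recalled before the statement, whose constant is independent of $T$, then gives $\tilde w \in {}_0C_b([0,T];B^s_{pp}(A))$ with a matching bound on $\sup_{t\in[0,T]}\|\tilde w(\cdot,t)\|_{B^s_{pp}(A)}$. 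Writing $w = v + \tilde w$ and adding the two bounds yields both $w \in C([0,T];B^s_{pp}(A))$ and the desired $T$-uniform estimate.

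The step I expect to need the most care is precisely preserving the $T$-independence of the final constant: if instead one extended $w$ itself from $[0,T]$ to $[0,\infty)$ and applied the first embedding, the extension operator would typically introduce a $T$-dependent factor. Extending only the initial datum and routing the remainder through the zero-trace embedding, whose constant is $T$-uniform, is what circumvents this difficulty.
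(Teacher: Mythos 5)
Your proof is correct and follows essentially the route the paper intends: the paper asserts the proposition is ``an immediate consequence'' of the trace characterization of $B^s_{pp}(A)$ and of the two continuity embeddings it records (one on $[0,\infty)$, one on $[0,T]$ with zero initial trace and $T$-independent constant), and your decomposition $w = v + \tilde w$ is precisely the natural way to combine those three ingredients, including your correct observation that $T$-uniformity forces the initial datum to be routed through a $[0,\infty)$-extension while the remainder goes through the zero-trace embedding.
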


\section{Formulation of the problem}\label{sec:formulation}
Let $\mathcal B$ be an elastic solid with an interior cavity completely filled by a viscous incompressible fluid $\mathcal L$. We denote by $\mathcal S$ the whole fluid-solid system. Let $\Omega$ and $\Omega_L$ be bounded domains in $\R^3$ with smooth boundaries, and with $\overline{\Omega_L}\subset\Omega$. We choose a reference configuration in which the solid occupies the domain $\Omega_B:=\Omega\setminus\overline{\Omega_L}$,  and the fluid occupies $\Omega_L$. The whole system $\mathcal S$ then occupies the bounded domain $\Omega$ in such a reference configuration. We denote $\Gamma_L:=\partial \Omega_B\cap\partial \Omega_L$, and $\Gamma_B:=\partial\Omega_B\setminus \Gamma_L$. We denote with $X$ the position vector of a material point in $\mathcal S$, having coordinates $(X_1,X_2,X_3)$ with respect a (Cartesian) coordinate system with natural basis $\{e_1,e_2,e_3\}$.

As time evolves and the solid $\mathcal B$ deforms, both fluid and solid volumes will change. We consider the {\em deformation mapping} 
\begin{equation}\label{eq:dm}
\chi(\cdot,t):\;  X\in \Omega\mapsto x=\chi(X,t)\in \R^3
\end{equation}
that gives the the position at time $t$ (also called {\em current position}) of the material point $P$ that occupied the position $X$ in the reference configuration, see Figure \ref{fig:dm}. 
\begin{figure}[h]
%\psfrag{B}{$\Omega_B$}
%\psfrag{L}{$\Omega_L$}
%\psfrag{S}{$\Omega_B^t$}
%\psfrag{F}{$\Omega_L^t$}
%\psfrag{O}{\small $O$}
%\psfrag{P}{$P$}
%\psfrag{d}{$\chi$}
%\psfrag{r}{$X$}
%\psfrag{c}{$x$}
%\psfrag{1}{\small $e_1$}
%\psfrag{2}{\small $e_2$}
%\psfrag{3}{\small $e_3$}
%\psfrag{0}{\text{reference configuration}}
%\psfrag{t}{\text{current configuration}}
%\includegraphics[width=\textwidth]{E-L}
\includegraphics[width=\textwidth]{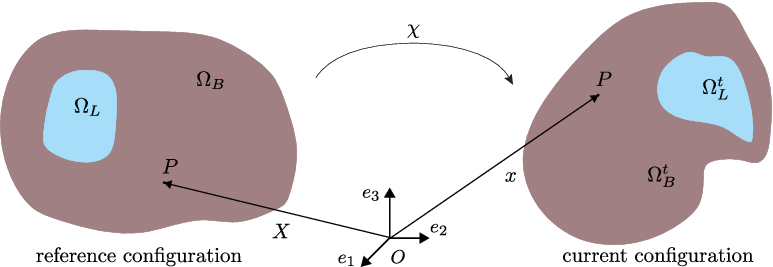}
\caption{The deformation mapping.}\label{fig:dm}
\end{figure}

The range of $\chi(\cdot,t)$ is denoted by $\Omega(t)$. More precisely, 
\[
\Omega(t):=\chi(\Omega,t)=\chi(\Omega_B,t)\cup\overline{\chi(\Omega_L,t)}\equiv\Omega_B^t\cup\overline{\Omega_L^t},\quad\text{for all }t\ge0.
\]
We refer to $\Omega_B^t$ and $\Omega_L^t$ as the {\em current} solid and fluid domain, respectively. 
%We denote $\Gamma_L:=\partial \Omega_B\cap\partial \Omega_L$, and $\Gamma_B:=\partial\Omega_B\setminus \Gamma_L$.  
We use the same frame of reference for both current and reference configuration, and we denote with $(x_1,x_2,x_3)$ the coordinates of the material point that occupied the point $(X_1,X_2,X_3)$ in the reference configuration. With respect to the basis $\{e_1,e_2,e_3\}$, we have that $x_i=\chi_i(X_1,X_2,X_3,t)$ for all $t\ge 0$ and $i=1,2,3$. 

We need the following assumptions regarding $\chi$: 
\begin{itemize}
\item[(A1)] $\chi(\cdot,0)=I$ in $\Omega_L$, where $I$ denotes the identity map. In general, $\chi(\cdot,0)\ne I$ in $\Omega_B$. 
\item[(A2)] Let $F$ be the {\em deformation gradient}: $F(X,t)=\nabla \chi(X,t)=\displaystyle\frac{\partial \chi_\alpha}{\partial X_\beta}e_\alpha\otimes e_\beta$, where $\otimes$ denotes the tensor product, defined through the dot product as $(a\otimes b)\cdot c=(b\cdot c)a$ for $a$, $b$, $c\in \R^3$. We then assume that the Jacobian 
\[
J:=\det\left[\frac{\partial \chi_\alpha}{\partial X_\beta}\right]_{(\alpha,\beta)}
>0\qquad\text{for all }(X,t)\in \overline{\Omega}\times[0,T).
\]
As a consequence of the latter, the deformation gradient $F$ is invertible, and the following equation hold 
\begin{equation}\label{eq:dt-F-1}
\frac{\partial F^{-1}}{\partial t}=-F^{-1}\cdot\nabla v\cdot F^{-1} \quad\text{in }\overline{\Omega}\times[0,T).
\end{equation}
The proof of the last displayed equation is a consequence of standard tensor calculus, see e.g. \cite[pages 12 \& 20]{ciarlet}. 
\item[(A3)] Let $u:=x-X$ denote the displacement of material points within $\mathcal S$. 
From (A1), we notice that $u(\cdot,0)\ne 0$, in general.  
\item[(A4)] The fluid is {\em incompressible}, then $J=1$ for every $(X,t)\in \overline{\Omega_L}\times [0,T)$. 
\item[(A5)] Since $J\ne 0$ in $\overline{\Omega}\times [0,T)$, the {\em cofactor tensor} $\cof{F}:=JF^{-T}$ must satisfy the following equation, known as {\em Piola condition}:  
\begin{equation}\label{eq:piola-condition}
\div (\cof{F})=\frac{\partial (\cof{F})_{\alpha\beta}}{\partial X_\beta}e_\alpha=\frac{\partial (JF^{-1}_{\beta\alpha})}{\partial X_\beta}e_\alpha=0\qquad \text{in }\overline{\Omega}\times [0,T).
\end{equation}
For a proof of the above equality we refer to \cite[end of page 39]{ciarlet}. 
\end{itemize}

The equations describing the motion of $\mathcal S$ are 
\begin{equation}\label{eq:eom}
\begin{aligned}
&\rho_B\frac{\partial^2 u}{\partial t^2}=\div P(u) &&\text{for }(X,t)\in \Omega_B\times(0,T),
\\
&\div v=0
&&\text{for }(x,t)\in \bigcup_{t\in (0,T)}\Omega_L^t\times \{t\},
\\
&\rho_{_{L}}\left(\frac{\partial v}{\partial t}+v_\alpha\frac{\partial v}{\partial x_\alpha}\right)=\div \mathcal T(v,p) &&\text{for }(x,t)\in \bigcup_{t\in (0,T)}\Omega_L^t\times \{t\},
\\
&\frac{\partial \chi}{\partial t}=v(\chi(X,t),t) &&\text{for }(X,t)\in \Omega_L\times(0,T),
\\
&P(u)\cdot \n=0 &&\text{for }(X,t)\in \Gamma_B\times(0,T),
\\
&v(\chi(X,t),t)=\frac{\partial u}{\partial t}(X,t)&&\text{for }(X,t)\in \Gamma_L\times(0,T),
\\
&P(u(X,t))\cdot \n(X,t)
\\
&\ =\mathcal T(v(\chi(X,t),t),p(\chi(X,t),t))\cdot F^{-T}(X,t)\cdot \n(X,t)
&&\text{for }(X,t)\in \Gamma_L\times(0,T). 
\end{aligned}
\end{equation}
In the above equations, $u=u(X,t)$ denotes the Lagrangian displacement of material points in $\mathcal B$, $\rho_B$ is the solid density (which we assume to be constant). 
The tensor $P(u(X,t))$ is the first Piola-Kirchhoff stress. We need a constitutive equation for such a tensor. We assume that $\mathcal B$ is  composed of a {\em linear elastic isotropic material}. The constitutive equation is given by the following {\em generalized Hooke's law}
\begin{equation}\label{eq:piola}
P(u):=\lambda(\tr(E(u))I+2\hat \mu E(u),\qquad E(u):=\frac 12 (\nabla u+(\nabla u)^T).
\end{equation}
The constants $\lambda$ and $\hat \mu$ are called Lam\'e constants, and they are both positive constants. The tensor $E(u)$ is called  (Lagrangian) infinitesimal strain tensor. Note that we will use the same symbols for differential operators --like divergence ($\div \cdot$), Laplacian ($\Delta\cdot$) and gradient ($\nabla \cdot$)-- with respect Lagrangian and Eulerian variables. It will be clear from the context with respect to which variables the differentiation is taken. For what concerns the fluid, $v=v(x,t)$ and $p=p(x,t)$ denote the Eulerian description of the fluid velocity and pressure, respectively. Due to assumption (A4), the fluid density $\rho_{_{L}}$ is constant. The tensor $\mathcal T=\mathcal T(v,p)$ denotes the (Eulerian) Cauchy stress tensor. We assume that $\mathcal L$ is a {\em viscous incompressible Newtonian fluid}. The constitutive equation for $\mathcal T$ is given by 
\begin{equation}\label{eq:cauchy}
\mathcal T(v,p):=-pI+2\mu D(v),\qquad D(v):=\frac 12 (\nabla v+(\nabla v)^T),
\end{equation}
where $\mu>0$ is the coefficient of dynamic viscosity, and $D(v)$ is called the rate of deformation tensor. Note that the the operators $E$ and $D$ in equations \eqref{eq:piola} and \eqref{eq:cauchy}, respectively, are the same differential operators but applied to different vector fields (i.e., they have different domains). Physically speaking $E(u)$ and $D(v)$ have different units, and thus different physical meanings. This explains also their different names. 

Equation \eqref{eq:eom}$_1$ are the so-called {\em Navier equations
%{\em Navier-Cauchy equations} }
%Lam\'e-Navier equations}, 
of linear elasticity}, they represent the balance of linear momentum for $\mathcal B$. Equations \eqref{eq:eom}$_{2,3}$ are the so-called {\em Navier-Stokes equations}, and describe the conservation of mass and balance of linear momentum for $\mathcal L$. 

Concerning the boundary conditions \eqref{eq:eom}$_{5,6,7}$, they represent -in order of appearance- the traction-free condition on the outer boundary of $\mathcal B$\footnote{We assume that there is vacuum outside $\mathcal B$, and no other force is applied on the outer boundary of $\mathcal B$. }, and the continuity of velocities and tractions on the fluid-solid interface. We note that the last two conditions in \eqref{eq:eom} should be read as limits approaching $\Gamma_L$ from the exterior and the interior of the fluid, respectively. 

The driving mechanism for the deformation/motion of $\mathcal S$ will be only the initial conditions imparted on the system. The most general initial conditions are 
\begin{equation}\label{eq:ic}
\begin{aligned}
&u(\cdot,0)=u_0,\quad \frac{\partial u}{\partial t}(\cdot,0)=u_1\qquad &&\text{in }\Omega_B,
\\
&v(\cdot,0)=v_0, \qquad \chi(\cdot, 0)=I\qquad &&\text{in }\Omega_L,
\end{aligned}\end{equation}
and they must satisfy the {\em compatibility conditions}
\begin{equation}\label{eq:compatibility}
\begin{aligned}
&\div v_0=0&&\text{on }\Omega_L,
\\
&P(u_0)\cdot \n=0 &&\text{on }\Gamma_B,
\\
&v_0=u_1&&\text{on }\Gamma_L,
\\
&P(u_0)\cdot \n =\mathcal T(v_0,p_0)\cdot \n
&&\text{on }\Gamma_L. 
\end{aligned}
\end{equation}
The following {\em energy balance} characterizes ``sufficiently'' regular solutions of \eqref{eq:eom}. 
\begin{lemma}\label{lem:energy}
Smooth solutions to \eqref{eq:eom} satisfy the following {\em energy balance}:
\begin{equation}\label{eq:energy}
\frac{d}{d t}\left[\frac{\rho_{_{L}}}{2}\norm{v}^2_{\Omega_L^t}+\frac{\rho_B}{2}\norm{\frac{\partial u}{\partial t}}^2_{\Omega_B}+\frac{\lambda}{2}\norm{\div(u)}^2_{\Omega_B}+\hat \mu\norm{E(u)}^2_{\Omega_B}\right]+2\mu\norm{D(v)}^2_{\Omega_L^t}=0. 
\end{equation}
\end{lemma}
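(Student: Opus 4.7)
The plan is to test each momentum equation against its natural energy multiplier and then reconcile the two resulting identities via the interface conditions \eqref{eq:eom}$_{6,7}$. First I would multiply the Navier equation \eqref{eq:eom}$_1$ by $\partial_t u$ and integrate over the (fixed) reference solid domain $\Omega_B$. The inertial term produces $\tfrac{d}{dt}\tfrac{\rho_B}{2}\norm{\partial_t u}^2_{\Omega_B}$. Integrating the stress divergence by parts and using the symmetry of $P(u)$ and $E(u)$ together with the constitutive law \eqref{eq:piola}, the bulk contribution identifies with
\[
-\frac{d}{dt}\left[\frac{\lambda}{2}\norm{\div u}^2_{\Omega_B}+\hat\mu\norm{E(u)}^2_{\Omega_B}\right].
\]
The boundary integral splits into a piece on $\Gamma_B$, which vanishes by the traction-free condition \eqref{eq:eom}$_5$, and an interface piece
\[
I_B:=\int_{\Gamma_L}\bigl(P(u)\cdot\n\bigr)\cdot\partial_t u\;dS_X.
\]

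Next, I would test the Navier--Stokes momentum equation \eqref{eq:eom}$_3$ with $v$ and integrate over the moving fluid domain $\Omega_L^t$. Since the boundary of $\Omega_L^t$ moves with velocity $v$, the Reynolds transport theorem combined with $\div v=0$ collapses the two inertial terms into $\tfrac{d}{dt}\tfrac{\rho_L}{2}\norm{v}^2_{\Omega_L^t}$: the convective surface contribution and the Reynolds boundary correction are equal and cancel. Integrating $\div\mathcal T(v,p)\cdot v$ by parts in the current configuration, using $\div v=0$ and the symmetry of $D(v)$, produces the dissipation $-2\mu\norm{D(v)}^2_{\Omega_L^t}$ in the bulk and the interface term
\[
I_L:=\int_{\partial\Omega_L^t}\bigl(\mathcal T(v,p)\cdot\nu\bigr)\cdot v\;dS_x,
\]
where $\nu$ denotes the outer unit normal to $\Omega_L^t$.

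It remains to show that $I_B+I_L=0$. I would pull $I_L$ back to $\Gamma_L$ via Nanson's formula: taking $\n$ to be the outer normal to $\Omega_B$ on $\Gamma_L$ (hence pointing \emph{into} $\Omega_L$), the consistently oriented image normal points into $\Omega_L^t$, so $\nu\,dS_x=-JF^{-T}\n\,dS_X$. Using the incompressibility $J=1$ on $\Omega_L$ from assumption (A4), the kinematic interface condition \eqref{eq:eom}$_6$ to replace $v\circ\chi$ by $\partial_t u$, and the dynamic interface condition \eqref{eq:eom}$_7$ to replace $\mathcal T\cdot F^{-T}\cdot\n$ by $P(u)\cdot\n$, the pullback equals $-I_B$; adding the two energy identities then yields \eqref{eq:energy}. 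The main obstacle---in fact the only genuinely delicate step---is the orientation bookkeeping on $\Gamma_L$ that produces this cancellation; once the orientations are fixed correctly, all remaining steps are standard manipulations permitted by the smoothness hypothesis.
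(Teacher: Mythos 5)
The paper states Lemma~\ref{lem:energy} without proof, so there is no paper argument to compare against; your proposal supplies the standard derivation, and it is correct. You test \eqref{eq:eom}$_1$ against $\partial_t u$ over the fixed domain $\Omega_B$, test \eqref{eq:eom}$_3$ against $v$ over $\Omega_L^t$ using Reynolds transport with $\div v=0$, and then cancel the two interface integrals. The orientation bookkeeping is handled properly: with $\n$ the outward normal to $\Omega_B$, Nanson's formula applied to the outward normal of $\Omega_L$ (which is $-\n$) together with $J=1$ from (A4) gives $\nu\,dS_x=-F^{-T}\n\,dS_X$, and the substitution via \eqref{eq:eom}$_6$ and \eqref{eq:eom}$_7$ yields $I_L=-I_B$ as required. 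One small phrasing point: the convective surface term and the Reynolds boundary term do not literally ``cancel'' each other; rather both equal $\int_{\partial\Omega_L^t}\tfrac{1}{2}|v|^2(v\cdot\nu)$, so that equating the integrated momentum equation with the Reynolds identity for $\tfrac{d}{dt}\int_{\Omega_L^t}\tfrac12|v|^2$ removes them. The substance of the argument is unaffected.
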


Equations \eqref{eq:eom} provide a mixed Lagrangian-Eulerian formulation of the equations of motion. In particular, the (Eulerian) velocity and pressure of the fluid vary in a domain which is time-dependent, and whose description depends on the deformation mapping $\chi$, defined in \eqref{eq:dm}, which is also an unknown of the problem (see \eqref{eq:eom}$_4$). Thus, as customary in this situation, we will solve the corresponding problem in its full Lagrangian formulation. Before proceeding, we need some notation and observations. 

For every vector field $f$ and tensor field $S$ defined on (or on a part of) $\Omega$, we write 
\[\begin{split}
f=f(x,t)=f(\chi(X,t),t)=:f^\chi(X,t),
\\
S=S(x,t)=S(\chi(X,t),t)=:S^\chi(X,t).
\end{split}\]
We may also need the so-called {\em Piola transforms} (see \cite[Exercise 1.12 \& p. 38]{ciarlet}: 
\[\begin{split}
&f_\chi(X,t):=(\cof{F(X,t)})^T\cdot f^\chi(X,t),
\\
& S_\chi(X,t):=S^\chi(X,t)\cdot\cof{F(X,t)}. 
\end{split}\] 
It follows that (see \cite[Exercise 1.12 \& Theorem 1.7-1]{ciarlet}):
\[\begin{split}
&\div_X f_\chi=J\div_x f,
\\
&\div_X S_\chi= J\div_x S. 
\end{split}\]
Using the above facts, Piola condition \eqref{eq:piola-condition}, and that $J=1$ on $\Omega_L$, we find the following Lagrangian descriptions of the divergence of the fluid velocity and stress, respectively: 
\begin{align}
&\div_x v
=\frac{\partial}{\partial X_i} [(\cof{F})_{ji}v^\chi_j]=(\cof{F})_{ji}\frac{\partial v^\chi_j}{\partial X_i}=\nabla_Xv^\chi:\cof{F}, \label{eq:piola-div-v}
\\
&\div_x \mathcal T(v,p)=\div_X \mathcal T_\chi(v^\chi,p^\chi),\label{eq:piola-div-T}
\end{align}
where 
\begin{multline}\label{eq:piola-transform-T}
\mathcal T_\chi(v^\chi,p^\chi)=-p^\chi\cof{F}+\mu\nabla_Xv^\chi\cdot(\cof{F})^T\cdot\cof{F}
\\
+\mu\cof{F}\cdot(\nabla_Xv^\chi)^T\cdot\cof{F}.
\end{multline}

The Lagrangian description of the equations of motion of $\mathcal S$ then reads as follows 
\begin{equation}\label{eq:eom-L}
\begin{aligned}
&\rho_B\frac{\partial^2 u}{\partial t^2}=\div P(u) &&\text{for }(X,t)\in \Omega_B\times(0,T),
\\
&\div v^\chi=0
&&\text{for }(X,t)\in \Omega_L\times (0,T),
\\
&\rho_{_{L}}\frac{\partial v^\chi}{\partial t}=\div \mathcal T_\chi(v^\chi,p^\chi) &&\text{for }(X,t)\in \Omega_L\times (0,T),
\\
&\frac{d \chi}{d t}=v^\chi(X,t)&&\text{for all }(X,t)\in \Omega_L\times (0,T),
\\
&P(u)\cdot \n=0 &&\text{for }(X,t)\in \Gamma_B\times(0,T),
\\
&v^\chi(X,t)=\frac{\partial u}{\partial t}(X,t)&&\text{for }(X,t)\in \Gamma_L\times(0,T),
\\
&P(u)\cdot \n=\mathcal T_\chi(v^\chi,p^\chi)\cdot \n
&&\text{for }(X,t)\in \Gamma_L\times(0,T). 
\end{aligned}
\end{equation}
It should be emphasized that, despite the above notation, the Cauchy stress tensor $\mathcal T^\chi(v^\chi,p^\chi)$) as well as the divergence free condition \eqref{eq:eom-L}$_2$ are nonlinear in $\chi$ (which remains an unknown of the problem). 

It will be more convenient to rewrite \eqref{eq:eom-L} in a more explicit form. We do so by rewriting the terms in \eqref{eq:eom-L}$_{2,3}$ exploiting the equations  \eqref{eq:piola-div-v}, \eqref{eq:piola-div-T}, \eqref{eq:piola-transform-T}, and using Piola condition \eqref{eq:piola-condition}. In addition, we drop the dependence on $\chi$ for $v$ and $p$ as it is now clear that we are working with Lagrangian variables. We also recall the hypotheses and notation introduced in (A1)--(A5), for example we recall that $F=\nabla \chi$ and $J=\det(F)=1$ on $\overline{\Omega_L}\times[0,T)$. Equations \eqref{eq:eom-L} are then equivalent to the following system of equations\footnote{All derivatives are now taken with respect to the Lagrangian variables. }
\begin{equation}\label{eq:eom-L-c}
\begin{aligned}
&\rho_B\frac{\partial^2 u}{\partial t^2}=\div P(u) &&\text{in }\Omega_B\times(0,T),
\\
&\nabla v:\cof{F}=0
&&\text{in }\Omega_L\times (0,T),
\\
&\rho_{_{L}}\frac{\partial v}{\partial t}=-\cof{F}\cdot\nabla p &&\text{in }\Omega_L\times (0,T),
\\
&\qquad\qquad\qquad\qquad+\mu\frac{\partial}{\partial X_\beta}\left[(\cof{F})_{\alpha\beta}(\cof{F})_{\alpha \gamma}\frac{\partial v}{\partial X_\gamma}\right]&&
\\
&\qquad\qquad\qquad\qquad+\mu\frac{\partial}{\partial X_j}\left[ (\cof{F})_{\ell j}(\cof{F})_{ik}\frac{\partial v_\ell}{\partial X_k}\right]e_i
\\
&\frac{d \chi}{d t}=v&&\text{in }\Omega_L\times (0,T),
\\
&P(u)\cdot \n=0 &&\text{on }\Gamma_B\times(0,T),
\\
&v=\frac{\partial u}{\partial t}&&\text{on }\Gamma_L\times(0,T),
\\
&P(u)\cdot \n=\left[-pI+\mu\nabla v \cdot(\cof{F})^T\right.
&&\text{on }\Gamma_L\times(0,T). 
\\
&\qquad\qquad\qquad\qquad\quad\quad\left. +\mu\cof{F}\cdot(\nabla v)^T\right]\cdot\cof{F}\cdot \n &&
\end{aligned}
\end{equation}
To the above equations, we append the following initial conditions 
\begin{equation}\label{eq:ic-L}
\begin{aligned}
&u(\cdot,0)=u_0,\quad \frac{\partial u}{\partial t}(\cdot,0)=u_1,\qquad &&\text{in }\Omega_B,
\\
&v(\cdot,0)=v_0, \qquad \chi(\cdot,0)=I,\qquad &&\text{in }\Omega_L,
\end{aligned}\end{equation}
satisfying ``{\em suitable}'' {\em compatibility conditions} (see conditions {\em (i)--(iv)} in Theorem \ref{th:main}).

\section{Main theorem and strategy of its proof}\label{sec:main-th}
Objective of this paper is to prove the local well-posedness of \eqref{eq:eom-L-c}. The following is the main theorem of this paper. 
\begin{theorem}\label{th:main}
Let $(u_0,u_1,v_0)\in H^{3}(\Omega_B)\times H^{3/2}(\Omega_B) \times H^{5/2}(\Omega_L)$ satisfying the {\em compatibility conditions}:  
\begin{itemize}
\item[(i)] $P(u_0)\cdot\n=0$ on $\Gamma_B$, and $u_1=v_0$ on $\Gamma_L$;
\item[(ii)] $\div v_0=0$ on $\Omega_L$;
\item[(iii)] There exists $u_2\in H^1(\Omega_B)$ such that $\rho_Bu_2=\div P(u_0)$ in $\Omega_B$;
\item[(iv)] There exist $(v_1,q_0)\in H^{1}(\Omega_L)\times H^{3/2}(\Omega_L)$ such that 
\begin{equation}\label{eq:compatibility-main}
\begin{aligned}
&\div v_1=\nabla v_0 :(\nabla v_0)^T&&\text{in }\Omega_L,
\\
&\rho_Lv_1=-\nabla q_0+\mu \Delta v_0&&\text{in }\Omega_L,
\\
&v_1=u_2 &&\text{on }\Gamma_L,
\\
&\mathcal T(v_0,q_0)\cdot \n=P(u_0)\cdot\n&&\text{on }\Gamma_L.
\end{aligned}
\end{equation}
\end{itemize}
Then, there exists $T=T(u_0,u_1,v_0)>0$ such that equations \eqref{eq:eom-L-c}--\eqref{eq:ic-L} admit a unique strong solution $(u,v,p,\chi)$ satisfying 
\[\begin{aligned}
&u\in \{\xi\in C([0,T];H^{5/2}(\Omega_B))\cap C^1([0,T];H^{3/2}(\Omega_B))\cap C^2([0,T];H^{1/2}(\Omega_B)):
\\
&\qquad\qquad\qquad\qquad\qquad\qquad\qquad\qquad\qquad P(u)\cdot \n\in 
H^{3/2,3/2}(\partial \Omega_B\times(0,T))\},
\\
&v\in L^2(0,T;H^{3}(\Omega_L))\cap H^1(0,T;H^2(\Omega_L))\cap H^2(0,T;L^2(\Omega_L)),
\\
&p\in \{\pi\in L^2(0,T;H^{2}(\Omega_L))\cap H^1(0,T;H^1(\Omega_L)):\; \frac{\partial \pi}{\partial t}\in H^{1/4}(0,T;L^2(\Gamma_L))\},
\\
&\chi\in H^1(0,T;H^3(\Omega_L))\cap H^2(0,T;H^2(\Omega_L))
\cap H^3(0,T;L^2(\Omega_L)).
\end{aligned}\]
In addition, $\chi(\cdot,t):\; \Omega_L\to\Omega_L^t$ is a volume preserving diffeomorphism for all $t\in (0,T)$, and satisfies  
$\div ((\nabla \chi)^{-T})=0$ in $\Omega_L\times(0,T)$. 
\end{theorem}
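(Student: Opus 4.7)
The approach is to construct the solution via two nested Banach fixed-point arguments, following the strategy outlined in the introduction. The outer iteration runs on the deformation mapping $\chi$, while the inner iteration resolves the coupled fluid-elasticity system \eqref{eq:eom-L-c} with the geometry frozen. The function class for the outer unknown is
\[
\mathcal X_\chi:=\{\chi\in H^1(0,T;H^3(\Omega_L))\cap H^2(0,T;H^2(\Omega_L))\cap H^3(0,T;L^2(\Omega_L)):\ \chi(\cdot,0)=I\},
\]
and $T$ will be chosen small enough that $F=\nabla\chi$ and $\cof{F}$ stay uniformly close to the identity in every trace norm needed for the variable-coefficient Stokes problem to be coercive and for the map $X\mapsto\chi(X,t)$ to be a $C^1$-diffeomorphism. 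The initial datum of the iteration is the ``ideal extension'' of $(u_0,u_1,u_2,v_0,v_1,q_0)$ provided by the compatibility conditions (i)--(iv).

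For the inner fixed point, with $\chi\in\mathcal X_\chi$ frozen, I would define an operator $\Psi_\chi\colon v\mapsto v^*$ on a suitable closed ball in
\[
\mathcal X_v:=L^2(0,T;H^3(\Omega_L))\cap H^1(0,T;H^2(\Omega_L))\cap H^2(0,T;L^2(\Omega_L)).
\]
Given a candidate $v$, the first step is to take its trace on $\Gamma_L$ and solve the Navier elasticity system \eqref{eq:eom-L-c}$_1$ on $\Omega_B$ with the traction-free condition on $\Gamma_B$ and the Dirichlet-type condition $\partial_t u=v$ on $\Gamma_L$; Theorem~\ref{th:N} yields $u$ in the claimed class, and in particular its hidden regularity gives $P(u)\cdot\n\in H^{3/2,3/2}(\partial\Omega_B\times(0,T))$. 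The second step is to substitute this trace into \eqref{eq:eom-L-c}$_7$ as Neumann data for the $\chi$-dependent Stokes problem \eqref{eq:eom-L-c}$_{2,3}$, moving all non-constant $\chi$-dependent contributions (including the divergence defect $\nabla v:(\cof{F}-I)$) to the right-hand side and applying Theorems \ref{th:nStokes}--\ref{th:nStokes_r}, which produces a new velocity--pressure pair $(v^*,p)$. Contractivity of $\Psi_\chi$ for $T$ small follows from the parabolic smoothing of the Stokes operator and from repeated use of Lemmas \ref{lem:interpolation}, \ref{lem:tsigma-s}, \ref{lem:holder-st} and \ref{lem:ba7} to majorize the nonlinear $\chi$-dependent terms by a small positive power of $T$ times the norm difference of two iterates.

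With $\Psi_\chi$ resolved, the outer map
\[
\Phi(\chi)(X,t):=X+\int_0^t v(X,s)\,ds,\qquad (u,v,p)=\text{fixed point of }\Psi_\chi,
\]
is well defined on the admissible ball in $\mathcal X_\chi$. Its self-mapping property follows because the regularity of $v$ loses exactly one time-derivative relative to that demanded of $\chi$, and its contractivity is obtained by subtracting the inner problems for two nearby deformation mappings $\chi_1,\chi_2$: the difference $\cof{F_1}\cdot(\cof{F_1})^T-\cof{F_2}\cdot(\cof{F_2})^T$ decomposes as a small factor (linear in $\chi_1-\chi_2$) times a bounded factor, yielding a $T^\varepsilon$ contraction constant. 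The auxiliary conclusions $\div((\nabla\chi)^{-T})=0$ and $J\equiv 1$ on $\Omega_L$ are then propagated from $t=0$ by the Piola identity \eqref{eq:piola-condition} and the divergence constraint $\nabla v:\cof{F}=0$.

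The hardest part, in my view, will be synchronizing the two interface conditions along the iteration. The Dirichlet trace $\partial_t u=v$ couples a hyperbolic and a parabolic variable, while the Neumann trace $P(u)\cdot\n$ has sufficient regularity to serve as Stokes data only through the hidden-regularity estimate of Theorem~\ref{th:N}; preserving both matchings at $t=0$ for every iterate is exactly what forces the technical compatibility hypotheses (iii)--(iv) on the auxiliary data $(u_2,v_1,q_0)$, and any attempt to close the contraction without tracking these initial traces precisely will fail. Carrying out the multilinear estimates on the $\cof{F}$-terms, which are quadratic in $F$ (hence cubic in $\nabla\chi$ once the divergence equation is included), without dissipating the small $T^\varepsilon$ gain required for contractivity is the most delicate bookkeeping step.
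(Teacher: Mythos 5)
Your overall architecture matches the paper's proof: an outer Banach fixed point on the deformation mapping $\chi$ via $\Lambda(\hat\chi)(X,t)=X+\int_0^t v\,ds$, an inner fixed point for the coupled fluid--elastic system with $\hat\chi$ frozen, the elastic subproblem solved by the hidden-regularity result of Theorem~\ref{th:N} to produce $P(u)\cdot\n$ as Neumann data, the nonhomogeneous Stokes theory of Theorems~\ref{th:nStokes}--\ref{th:nStokes_r} for the fluid subproblem, and small powers of $T$ from Lemmas~\ref{lem:interpolation}--\ref{lem:ba7} to close both contractions. The paper proves this as Theorem~\ref{th:existence-eom-L-c-hat} (inner problem) and then closes the outer loop using Proposition~\ref{prop:nStokes-hatchi-fgd}; your sketch, though it does not explicitly isolate that auxiliary proposition, describes the same difference argument for the outer contraction, including the factorization of $\cof{\hat F^{(1)}}-\cof{\hat F^{(2)}}$ that yields the $T^\varepsilon$ gain.

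There is one genuine gap: you set up the inner map as $\Psi_\chi\colon v\mapsto v^*$ on a ball in the velocity space $\mathcal X_v$ alone. Once the $\chi$-dependent Stokes operator is linearized around the constant-coefficient operator (as you propose), the pressure does not disappear from the right-hand side: the bulk forcing is $f=(I-\cof{\hat F})\cdot\nabla\tilde p+\mu\,\div S(\tilde v,\hat\chi)$ and the Neumann defect $d$ contains $-\tilde p(I-\cof{\hat F})\cdot\n$, so both the interior and boundary corrections are linear in the \emph{pressure} of the previous iterate, not only in its velocity. Iterating on $v$ alone therefore does not give a well-defined map, and the contraction cannot close without controlling $\tilde p$ in the same norm in which it is fed back. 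The paper handles this by fixing the iterate set $\mathcal E$ as a subset of $\mathcal V_2\times\Pi_2$ (velocity--pressure pairs with prescribed traces $V_0,V_1,Q_0$ at $t=0$) and proving the contraction in the product norm $\|\cdot\|_{\mathcal V_2}+\|\cdot\|_{\Pi_2}$; this is precisely why $\Pi_2$ is chosen to retain $\partial_t p\in H^{1/4}(0,T;L^2(\Gamma_L))$ --- that specific trace regularity is needed for the term $-\tilde p(I-\cof{\hat F})\cdot\n$ in $d$. You should enlarge the inner fixed-point variable to the pair $(v,p)$ and run the contraction estimate in $\mathcal V_2\times\Pi_2$; once this is fixed, the remainder of your plan is sound.
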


\begin{remark}\label{rm:compatibility-conditions}
The compatibility conditions $(i)$--$(iv)$ are required because of the regularity of the data and of the corresponding solution. In particular, by Proposition \ref{prop:traceat0}, our solution satisfies 
\[\begin{split}
&\frac{\partial \chi}{\partial t}=v\in C([0,T];H^{5/2}(\Omega_L)),\qquad 
\\
&\frac{\partial v}{\partial t}\in C([0,T];H^1(\Omega_L)),\quad p\in C([0,T];H^{3/2}(\Omega_L)). 
\end{split}\]
So, equations \eqref{eq:eom-L-c} must be satisfied at time $t=0$, this immediately explains the compatibility conditions $(i)$--$(iii)$ and the equations \eqref{eq:compatibility-main}$_{2,3,4}$. In particular, 
\[
u_2=\frac{\partial^2 u}{\partial t^2}(\cdot,0),\qquad v_1=\frac{\partial v}{\partial t}(\cdot,0),\qquad q_0=p(\cdot, 0).
\]
We further notice that $\nabla v:\cof{F}\in C^1([0,T];L^2(\Omega_L))$,  
thus
\[
\frac{\partial }{\partial t}[\nabla v:\cof{F}]|_{t=0}=0.
\]
From the above equality and Piola condition \eqref{eq:piola-condition}, we find \eqref{eq:compatibility-main}$_1$.  

We observe that $\nabla v_0:(\nabla v_0)^T\in L^2(\Omega_L)$ with $\norm{\nabla v_0:(\nabla v_0)^T}_{L^2(\Omega_L)}\le c\norm{v_0}^2_{H^2(\Omega_L)}$ (thanks to H\"older's inequality and Sobolev embedding theorem). Let $U_2\in H^1(\Omega_L)$ be an extension of $u_2|_{\Gamma_L}$ in $\Omega_L$ given by \cite[Chapter 1, Theorem 8.3]{lionsmagenesI}, and consider $w:=v_1-U_2$ the solution to the following problem  
\begin{equation}\label{eq:div-w}\begin{aligned}
&\div w=\nabla v_0:(\nabla v_0)^T+\div U_2 \qquad &&\text{in }\Omega_L,
\\
&w=0 \qquad &&\text{on }\Gamma_L.
\end{aligned}\end{equation}
Note that, by assuming \eqref{eq:compatibility-main}$_{1,3}$, we necessarily have that 
\[
\int_{\Gamma_L}\n\cdot\left[(\nabla v_0)\cdot v_0\right]=\int_{\Gamma_L}u_2\cdot \n.
\]
By \cite[Theorem III.3.1]{Ga}, Poincar\'e inequality, and continuity of the right-inverse of the trace (see again \cite[Chapter 1, Theorem 8.3]{lionsmagenesI}), we infer that there exists at least one solution $w\in H^1_0(\Omega_L)$ of \eqref{eq:div-w} satisfying the estimate 
\[
\norm{w}_{H^1(\Omega_L)}\le c\left(\norm{v_0}^2_{H^2(\Omega_L)}+\norm{u_2}_{H^{1/2}(\Gamma_L)}\right),
\]
with a positive constant $c$ depending only on $\Omega_L$. Recalling the definition of $w$ and compatibility condition $(iii)$, by possibly updating the constant $c$, we obtain that a solution $v_1\in H^1(\Omega_L)$ to \eqref{eq:compatibility-main}$_{1,3}$ must satisfy the following estimate  
\begin{equation}\label{eq:estimate-compatibility-v1}
\norm{v_1}_{H^1(\Omega_L)}\le c\left(\norm{v_0}^2_{H^2(\Omega_L)}+\norm{u_0}_{H^{3}(\Omega_B)}\right).
\end{equation}
Thanks to the above estimate, we can also provide an estimate (in terms of the initial data) for the corresponding pressure field $q_0$ satisfying \eqref{eq:compatibility-main}$_{2,4}$. In fact, by the Helmholtz-Weyl decomposition of $L^2(\Omega_L)$ (see \cite[Theorem III.1.1]{Ga}), we can write 
\[
\mu\Delta v_0=w_1+w_2,
\]
where $w_1=\nabla \pi$ for some $\pi\in H^1(\Omega_L)$ and $w_2\in L^2_\sigma(\Omega_L)$, where $L^2_\sigma(\Omega_L)=\{\varphi\in L^2(\Omega_L):\; \div\varphi=0\text{ in }\Omega_L\text{ and }\varphi\cdot\n=0\text{ on }\Gamma_L\}$. Set $p_0:=q_0-\pi$, then $p_0$ is a weak (in the sense of distributions) solution to the following elliptic problem 
\[\begin{aligned}
&\Delta p_0=-\rho_L\nabla v_0:(\nabla v_0)^T\qquad&&\text{in }\Omega_L,
\\
&p_0=\left[2\mu D(v_0)-P(u_0)\right]\cdot\n-\pi\qquad&&\text{on }\Gamma_L.
\end{aligned}\] 
By elliptic estimates, we have that 
\[
\norm{p_0}_{H^1(\Omega_L)}\le c\left(\norm{v_0}^2_{H^{2}(\Omega_L)}+\norm{v_0}_{H^{2}(\Omega_L)}+\norm{u_0}_{H^3(\Omega_B)}\right),
\]
and since 
\[
\nabla q_0=\rho_L v_1-\mu\Delta v_0\in H^{1/2}(\Omega_L),
\]
we infer that 
\begin{equation}\label{eq:estimate-compatibility-q0}
\norm{q_0}_{H^{3/2}(\Omega_L)}\le c\left(\norm{v_0}^2_{H^{2}(\Omega_L)}+\norm{v_0}_{H^{5/2}(\Omega_L)}+\norm{u_0}_{H^3(\Omega_B)}\right). 
\end{equation}
\end{remark}

To prove the existence of solutions to the above initial-boundary value problem, we will use the following fixed point argument. 
We denote 
\begin{equation}\label{eq:spaceD}
\begin{split}
\mathcal D&:=H^1(0,T;H^3(\Omega_L))
\cap H^2(0,T;H^2(\Omega_L))\cap H^3(0,T;L^2(\Omega_L)),
\\
\mathcal D_1&:=\mathcal D \cap W^{1,\infty}(0,T;H^{5/2}(\Omega_L))\cap W^{2,\infty}(0,T;H^1(\Omega_L)),
\end{split}
\end{equation}
endowed with the norms 
\begin{equation}\label{eq:normD}
\begin{split}
&\norm{\xi}_{\mathcal D}:=\norm{\xi}_{H^1(0,T;H^3(\Omega_L))} + \norm{\xi}_{H^2(0,T;H^2(\Omega_L))} + \norm{\xi}_{H^3(0,T;L^2(\Omega_L))}
\\
&\norm{\xi}_{\mathcal D_1}:=\norm{\xi}_{\mathcal D}+\norm{\xi}_{W^{1,\infty}(0,T;H^{5/2}(\Omega_L))}+\norm{\xi}_{W^{2,\infty}(0,T;H^1(\Omega_L))}. 
\end{split}\end{equation}
Let $R>0$ be sufficiently large so that $\norm{v_0}_{H^{5/2}(\Omega_L)}\le R$, and the following set is non-empty  
\begin{multline}\label{eq:B_R}
B_R=\{\xi\in \mathcal D_1:\; \xi(\cdot,0)=I,\ \frac{\partial \xi}{\partial t}(\cdot,0)=v_0,\ 
\det(\nabla \xi)\ne 0\text{ in }\Omega_L\times(0,T),
\\
\norm{\xi}_{\mathcal D_1}\le R\}. 
\end{multline}

Fix $\hat \chi\in B_R$, and denote by $\hat F=\nabla \hat \chi$ the corresponding deformation gradient (recall that $\hat J=\det(\hat F)\ne 0$ on $\Omega_L$, and then \eqref{eq:piola-condition} is satisfied). Furthermore, suppose that 
\[
\hat \chi (\cdot,0)= I\qquad\text{and}\qquad \frac{\partial \hat \chi}{\partial t}(\cdot,0)=v_0.
\]
We seek a solution $(u,v,p)$ of 
\begin{equation}\label{eq:eom-L-c-hat}
\begin{aligned}
&\rho_B\frac{\partial^2 u}{\partial t^2}=\div P(u) &&\text{in }\Omega_B\times(0,T),
\\
&\nabla v:\cof{\hat F}=0 
&&\text{in }\Omega_L\times (0,T),
\\
&\rho_{_{L}}\frac{\partial v}{\partial t}=-\cof{\hat F}\cdot\nabla p && \text{in }\Omega_L\times (0,T),
\\
&\qquad\qquad\qquad\qquad+\mu\frac{\partial}{\partial X_\beta}\left[(\cof{\hat F})_{\alpha\beta}(\cof{\hat F})_{\alpha \gamma}\frac{\partial v}{\partial X_\gamma}\right]&&
\\
&\qquad\qquad\qquad\qquad+\mu\frac{\partial}{\partial X_j}\left[ (\cof{\hat F})_{\ell j}(\cof{\hat F})_{ik}\frac{\partial v_\ell}{\partial X_k}\right]e_i
\\
&P(u)\cdot \n=0 &&\text{on }\Gamma_B\times(0,T),
\\
&v=\frac{\partial u}{\partial t}&&\text{on }\Gamma_L\times(0,T),
\\
&P(u)\cdot \n=\left[-pI+\mu\nabla v \cdot(\cof{\hat F})^T \right. && \text{on }\Gamma_L\times(0,T),
\\
&\qquad\qquad\qquad\qquad\qquad \left. +\mu\cof{\hat F}\cdot(\nabla v)^T\right]\cdot\cof{\hat F}\cdot \n,
&&
\end{aligned}
\end{equation}
with $P(u)$  the first Piola-Kirchhoff stress tensor introduced in \eqref{eq:piola}, and
with initial conditions 
\begin{equation}\label{eq:ic-hat}
\begin{aligned}
&u(\cdot, 0)=u_0,\qquad \frac{\partial u}{\partial t}(\cdot, 0)=u_1 &&\text{in }\Omega_B,
\\
&v(\cdot, 0)=V_0 &&\text{in }\Omega_L.
\end{aligned}
\end{equation}
These initial conditions need to satisfy suitable compatibility conditions (see {\em (i)--(iv)} in Theorem \ref{th:existence-eom-L-c-hat}). 
Once $(u,v,p)$ is found, consider the map  
\begin{equation}\label{eq:fixed-point-map-chi}
\Lambda:\; \hat \chi \mapsto \Lambda(\hat \chi)=\chi(X,t):=X+\int^t_0v(X,s)\; ds.
\end{equation}
A fixed point of the latter will provide the solution $(u,v,p,\chi)$ to \eqref{eq:eom-L-c}. 

Before we embark on the proof of existence of solutions to \eqref{eq:eom-L-c-hat}, and then of \eqref{eq:eom-L-c},  
let us set up some notation and prove some useful estimates concerning the deformation mapping $\hat \chi$. 

With respect to the natural basis associated with the Cartesian coordinates $(X_1,X_2,X_3)$, we can represent the deformation gradient as $\hat F=\hat F_{\alpha\beta} e_\alpha\otimes e_\beta$, for some functions $\hat F_{\alpha\beta}=\hat F_{\alpha\beta}(X_1,X_2,X_3,t)$, $\alpha,\beta=1,2,3$, such that 
\[
\hat F_{\alpha\beta}(X_1,X_2,X_3,t)=\delta_{\alpha\beta}+\int^t_0\frac{\partial }{\partial s}\frac{\partial \hat \chi_\alpha}{\partial X_\beta}(X_1,X_2,X_3,s)\; d s.
\] 
The third-order tensor defining the gradient of $\hat F$ and the fourth-order tensor of second derivatives of ${\hat F}$ are denoted as follows 
\[\begin{split}
\nabla \hat F&=\frac{\partial {\hat F}_{\alpha\beta}}{\partial X_\gamma} e_\alpha\otimes e_\beta\otimes  e_\gamma,\qquad
D^2 {\hat F}=\frac{\partial^2 {\hat F}_{\alpha\beta}}{\partial X_\delta\partial X_\gamma} e_\alpha\otimes e_\beta\otimes  e_\gamma\otimes e_\delta.
\end{split}\]
 By Jensen's inequality, we have that  
\begin{equation}\label{eq:iv0.1}\begin{split}
\norm{\nabla {\hat F}}_{L^2(\Omega_L)}
&\le T^{1/2}\norm{\hat\chi}_{H^1(0,T;H^2(\Omega_L))},
\end{split}\end{equation}
and similarly
\begin{equation}\label{eq:iv0.2}\begin{split}
\norm{D^2 {\hat F}}_{L^2(\Omega_L)}
&\le T^{1/2}\norm{\hat\chi}_{H^1(0,T;H^3(\Omega_L))}.
\end{split}\end{equation}

Furthermore, if we set $\hat v=\partial \hat \chi/\partial t$, we have that 
\[
D^2\hat v\in L^2(0,T;H^1(\Omega_L))\cap L^\infty(0,T;L^2(\Omega_L)),
\] 
and by Lemma \ref{lem:holder-st} (with $p=2$, $s_1=1$, $q=+\infty$, $s_2=0$, and $\theta=2/3$), we find that 
\[
\int^t_0\nabla(\nabla \hat v)\;d s=D^2\hat \chi\in C([0,T];L^3(\Omega_L))\text{ with }\norm{D^2\hat \chi}_{C([0,T];L^3(\Omega_L))}\le CT^{2/3}R,
\]
where $C>0$ is a constant independent of $T$. 

Thanks to the above observations and the following facts \footnote{Throughout  the paper, $\nabla {\hat F}^{-1}:=\nabla G({\hat F})$ and $D^2{\hat F}^{-1}=D^2 G({\hat F})$, where $G$ is the tensor field defined as $G({\hat F})={\hat F}^{-1}$. Moreover, the $\alpha\beta$-component of ${\hat F}^{-1}$ with respect to the basis $\{e_\alpha\otimes e_\beta:\; \alpha,\beta=1,2,3\}$ will be denoted with ${\hat F}^{-1}_{\alpha\beta}$. We also recall that $\cof{\hat F}=\hat J\hat F^{-T}$, $\hat J=\det(\hat F)$, $\frac{d \hat J}{d t}=\nabla \hat v:\cof{\hat F}$ (by \cite[Section 10]{gurtin}  and \eqref{eq:piola-div-v}), and $\hat F$ satisfies \eqref{eq:dt-F-1}.  }
\[\begin{split}
&\frac{\partial {\hat F^{-1}}_{\alpha\beta}}{\partial X_\gamma}=-{\hat F}^{-1}_{\alpha\delta}\frac{\partial {\hat F}_{\delta\epsilon}}{\partial X_\gamma}{\hat F}^{-1}_{\epsilon\beta},\qquad \text{for all $\alpha,\beta,\gamma=1,2,3$, }
\\
&\frac{\partial }{\partial t}\cof{\hat F}=\left[(\nabla \hat v:\cof{\hat F})I-\cof{\hat F}\cdot(\nabla \hat v)^T\right]\cdot \hat F^{-T},
\end{split}\]
by repeatedly using Sobolev embeddings, H\"older's inequality, and the trace theorem (see e.g. \cite[Subsection 2.1]{boulakia19} for similar estimates), we can easily obtain the proof of the following lemma. 

\begin{lemma}\label{lem:estimate_hatchi}
Let $\hat\chi\in B_R$, and denote by ${\hat F}=\nabla \hat\chi$ the corresponding deformation gradient. Then, there exists a positive constant $C$, independent of $T$, such that following estimates hold:
\begin{itemize}
\item[(i)] $\norm{\hat F-I}_{C([0,T];H^2(\Omega_L))}$, $\norm{{\hat F}-I}_{C([0,T]\times\overline{\Omega_L})}\le CT^{1/2}R$.
\item[(ii)] $\displaystyle\norm{\frac{\partial \chi}{\partial t}}_{L^{2/\theta}(0,T;H^{2+\theta}(\Omega_L))}\le CR$ for all $\theta\in [0,1]$. 
\item[(iii)] $\norm{\nabla{\hat F}}_{C([0,T];L^3(\Omega_L))}\le CT^{2/3}R$.
\item[(iv)] If $T^{1/2}R\le 1$, then 
\begin{equation}\label{eq:iv1}
\begin{split}
&\norm{{\hat F}}_{C([0,T]\times\overline{\Omega_L})}, \qquad  \norm{\cof{\hat F}}_{C([0,T]\times\overline{\Omega_L})},
\\ 
&\norm{{\hat F}}_{C([0,T];H^2(\Omega_L))}, \qquad \norm{\cof{\hat F}}_{C([0,T];H^2(\Omega_L))}\le C;
\end{split}
\end{equation}
\begin{equation}\label{eq:iv2}
\norm{\cof{\hat F}-I}_{C([0,T]\times\overline{\Omega_L})}\le CT^{1/2}R;
\end{equation}
\begin{equation}\label{eq:iv3}
\norm{\nabla \cof{\hat F}}_{C([0,T];L^3(\Omega_L))}\le CT^{2/3}R.
\end{equation}
\begin{equation}\label{eq:iv4}
\norm{\cof{\hat F}-I}_{H^1(0,T;C(\Gamma_L))},\quad \norm{\cof{\hat F}\cdot(\cof{\hat F}-I)}_{H^1(0,T;C(\Gamma_L))} \le CR. 
\end{equation}
\begin{equation}\label{eq:iv4.1}
\norm{\nabla \cof{\hat F}}_{C([0,T];L^2(\Omega_L))},\quad \norm{D^2 \cof{\hat F}}_{C([0,T],L^2(\Omega_L))}\le CT^{1/2}R.
\end{equation}
\begin{equation}\label{eq:iv5}
\norm{\frac{\partial }{\partial t}\nabla \cof{\hat F}}_{L^2(0,T;L^3(\Omega_L))}\le CRT^{1/6}.
\end{equation}
\begin{equation}\label{eq:iv6}
\norm{\frac{\partial }{\partial t}\cof{\hat F}}_{L^\infty(0,T;H^{3/2}(\Omega_L))},\quad \norm{\frac{\partial^2 }{\partial t^2}\cof{\hat F}}_{L^\infty(0,T;L^2(\Omega_L))}\le CR. 
\end{equation}
\begin{multline}\label{eq:iv7}
\norm{\frac{\partial}{\partial t}\cof{\hat F}}_{H^{1/4}(0,T;L^4(\Gamma_L))}, 
%\\
\norm{\cof{\hat F}\cdot\frac{\partial}{\partial t}\cof{\hat F}}_{H^{1/4}(0,T;L^4(\Gamma_L))}\le CRT^{1/12}. 
\end{multline}
\end{itemize}
\end{lemma}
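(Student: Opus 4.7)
The plan is to derive every estimate from two universal inputs: the fundamental-theorem-of-calculus identity $\hat F(\cdot,t)=I+\int_0^t\partial_s\hat F\,ds$ (and its spatial and higher time-derivative analogues), together with the defining bound $\|\hat\chi\|_{\mathcal D_1}\le R$ of $B_R$. Any time integration is converted to a pointwise-in-$t$ estimate with an explicit power $T^\alpha$ via Minkowski and Hölder; spatial products are handled by Hölder in $X$ combined with the three-dimensional Sobolev embeddings $H^2(\Omega_L)\hookrightarrow C(\overline{\Omega_L})$ and $H^1(\Omega_L)\hookrightarrow L^6(\Omega_L)$, and with the Banach-algebra property of $H^s(\Omega_L)$ for $s>3/2$. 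For boundary norms I pass via the trace $H^{s+1/2}(\Omega_L)\hookrightarrow H^s(\Gamma_L)$.

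Items (i)--(iii) are immediate. For (i), Minkowski and Cauchy--Schwarz in time give $\|\hat F-I\|_{C([0,T];H^2)}\le T^{1/2}\|\partial_t\hat F\|_{L^2(0,T;H^2)}\le T^{1/2}R$, and the pointwise bound follows from $H^2\hookrightarrow C$. Estimate (ii) is a direct application of Lemma \ref{lem:holder-st} with $p=2$, $q=\infty$, $s_1=3$, $s_2=2$, interpolating $\partial_t\hat\chi\in L^2(0,T;H^3)\cap L^\infty(0,T;H^2)$, both already built into $\mathcal D_1$. For (iii), Lemma \ref{lem:holder-st} with $\theta=2/3$ places $D^2\hat v\in L^3(0,T;H^{2/3})\hookrightarrow L^3(0,T;L^3(\Omega_L))$; integrating from $0$ to $t$ and applying Hölder with exponent $3/2$ in time produces the factor $T^{2/3}$.

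All bounds in (iv) exploit the smallness condition $T^{1/2}R\le 1$, which by (i) forces $\|\hat F-I\|_{C([0,T]\times\overline{\Omega_L})}\le 1$, so $\hat F$ is pointwise invertible via Neumann series and $\hat F^{-1}$ is uniformly bounded. Combined with $\cof{\hat F}=\hat J\hat F^{-T}$ and the Taylor expansion $\hat J=1+\tr(\hat F-I)+O(|\hat F-I|^2)$, this delivers \eqref{eq:iv1} and \eqref{eq:iv2}; the spatial-derivative bounds \eqref{eq:iv3} and \eqref{eq:iv4.1} reduce to (i) and (iii) via the chain-rule identity $\partial_\gamma\hat F^{-1}=-\hat F^{-1}\cdot\partial_\gamma\hat F\cdot\hat F^{-1}$ stated in the footnote. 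For the time-derivative estimates, the identity $\partial_t\cof{\hat F}=[(\nabla\hat v:\cof{\hat F})I-\cof{\hat F}\cdot(\nabla\hat v)^T]\cdot\hat F^{-T}$, together with $\hat v\in L^\infty(0,T;H^{5/2})$ and the Banach-algebra structure of $H^{3/2}$, produces \eqref{eq:iv6}; differentiating once more and using $\partial_t\hat v\in L^\infty(0,T;H^1)$ from $\mathcal D_1$ handles the second time derivative. Estimate \eqref{eq:iv4} is then immediate from the same expression combined with trace and $H^{3/2}(\Gamma_L)\hookrightarrow C(\Gamma_L)$.

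The main obstacle I anticipate is the fractional-in-time boundary estimate \eqref{eq:iv7}. After the same product expansion of $\partial_t\cof{\hat F}$ and trace to $\Gamma_L$, one must control a bilinear-type expression in $H^{1/4}(0,T;L^4(\Gamma_L))$, which calls for a Lemma \ref{lem:ba7}-style product estimate compatible with the different time regularities of $\nabla\hat v$ and $\cof{\hat F}$. The small positive power $T^{1/12}$ must then be extracted by combining a Hölder step in time between $H^{1/4}$ and a stronger fractional regularity with Lemma \ref{lem:tsigma-s} applied to the factor $\cof{\hat F}-I$, which vanishes at $t=0$ because $\hat\chi(\cdot,0)=I$. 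Making the exponent arithmetic land exactly on $1/12$ is the only genuinely non-routine bookkeeping in the lemma.
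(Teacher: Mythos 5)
Your overall strategy --- the FTC representation $\hat F = I + \int_0^t\partial_s\hat F\,ds$, Jensen/H\"older in time, Sobolev embeddings in space, the interpolation Lemma~\ref{lem:holder-st}, and the two identities for $\nabla\hat F^{-1}$ and $\partial_t\cof{\hat F}$ --- is exactly what the paper invokes, and (i)--(iii) plus the algebraic bounds \eqref{eq:iv1}--\eqref{eq:iv4.1} come out correctly. Two technical misstatements do need fixing, however. First, $H^{3/2}(\Omega_L)$ is \emph{not} a Banach algebra in $\R^3$ (it is exactly the critical exponent $d/2$); what makes \eqref{eq:iv6} work is the bounded pointwise multiplication $H^2(\Omega_L)\times H^{3/2}(\Omega_L)\to H^{3/2}(\Omega_L)$, combining the uniform $C([0,T];H^2)$ bounds on $\cof{\hat F}$ and $\hat F^{-T}$ from \eqref{eq:iv1} with $\nabla\hat v\in L^\infty(0,T;H^{3/2})$. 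Second, for \eqref{eq:iv4} you cannot pass from $\partial_t\cof{\hat F}\in L^\infty(0,T;H^{3/2}(\Omega_L))$ to $C(\Gamma_L)$ via trace: the trace of $H^{3/2}(\Omega_L)$ lands in $H^1(\Gamma_L)$, and $H^1$ on a two-dimensional surface does \emph{not} embed in $C$. The correct route is to use $\nabla\hat v\in L^2(0,T;H^2(\Omega_L))\hookrightarrow L^2(0,T;C(\overline{\Omega_L}))$ directly, which yields the $H^1(0,T;C(\Gamma_L))$ regularity claimed (note the statement asks for $H^1$ in time, not $L^\infty$).

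Two items also remain unaddressed. Estimate \eqref{eq:iv5} is skipped; it follows by the same Lemma~\ref{lem:holder-st}/time-H\"older mechanism as (iii), applied to $\partial_t\nabla\cof{\hat F}$, which is bilinear in $(D^2\hat v,\nabla\hat v)$ and $(\hat F,\nabla\hat F)$ with all the $\hat F$-factors uniformly bounded. More substantively, your sketch for \eqref{eq:iv7} does not close: applying Lemma~\ref{lem:tsigma-s} to the factor $\cof{\hat F}-I$ (or $\hat F^{-T}-I$) handles only the remainder terms in the expansion of $\partial_t\cof{\hat F}$, not the leading term, and Lemma~\ref{lem:tsigma-s} cannot be applied to $\partial_t\cof{\hat F}$ itself because $\partial_t\cof{\hat F}(\cdot,0)=-(\nabla v_0)^T\ne 0$. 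You need the additional split $\nabla\hat v=\nabla v_0+(\nabla\hat v-\nabla v_0)$: the constant piece carries an explicit $T^{1/2}$ from the $H^{1/4}(0,T)$-norm of a time-constant function, while the zero-trace piece gains a positive power of $T$ from Lemma~\ref{lem:tsigma-s} after placing $\nabla\hat v$ in $H^\sigma(0,T;L^4(\Gamma_L))$ for some $\sigma\in(1/2,1]$ via Lemma~\ref{lem:interpolation} and the trace theorem (e.g.\ $\sigma=2/3$ gives $\nabla\hat v|_{\Gamma_L}\in H^{2/3}(0,T;H^{5/6}(\Gamma_L))\hookrightarrow H^{2/3}(0,T;L^4(\Gamma_L))$). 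With this added step the plan goes through.
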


\section{Existence of solutions to \eqref{eq:eom-L-c-hat}-\eqref{eq:ic-hat}}\label{sec:proof-thm-eom-L-c-hat}\label{sec:proof-main0}
In this section, $\hat \chi\in B_R$ is fixed. Correspondingly, we have $\hat F=\nabla \hat \chi$, and suppose that 
\[
\frac{\partial \hat \chi}{\partial t}(\cdot,0)=v_0,
\]
with $\norm{v_0}_{H^{5/2}(\Omega_L)}\le R$. 

We use the following notation %(introduced in \eqref{eq:spaces2} for a generic bounded domain) 
for the functional spaces  
\begin{equation*}
\begin{split}
&\mathcal V_2:=L^2(0,T;H^3(\Omega_L))\cap H^1(0,T;H^2(\Omega_L)) \cap H^2(0,T;L^2(\Omega_L))
\\
&\Pi_2:=L^2(0,T;H^2(\Omega_L))\cap H^1(0,T;H^1(\Omega_L))\cap H^{1+1/4}(0,T;L^2(\Gamma_L)). 
\end{split}\end{equation*}
with norms defined as follows 
\begin{equation}\label{eq:normV2}
\norm{v}_{{\mathcal V}_2}:=\norm{v}_{L^2(0,T;H^3(\Omega_L))}+\norm{\frac{\partial v}{\partial t}}_{L^2(0,T;H^2(\Omega_L))}+\norm{\frac{\partial^2 v}{\partial t^2}}_{L^2(0,T;L^2(\Omega_L))}
\end{equation}
and 
\begin{equation}\label{eq:normPi2}
\norm{p}_{\Pi_2}:=\norm{p}_{L^2(0,T;H^2(\Omega_L))}+\norm{\frac{\partial p}{\partial t}}_{L^2(0,T;H^1(\Omega_L))}+\norm{\frac{\partial p}{\partial t}}_{H^{1/4}(0,T;L^2(\Gamma_L))}. 
\end{equation}
%in \eqref{eq:normV2} and \eqref{eq:normPi2}, respectively. 
Accordingly, we denote by  
\[\begin{split}
\mathcal U:=\{\xi\in C(0,T;H^{5/2}(\Omega_B))\cap & C^1(0,T;H^{3/2}(\Omega_B))\cap C^2(0,T;H^{1/2}(\Omega_B)):
\\
&\qquad\quad P(u)\cdot \n\in  
H^{3/2,3/2}(\partial \Omega_B\times(0,T))\},
\end{split}\]
endowed with the norm %\eqref{eq:normU}. 
\begin{multline}\label{eq:normU}
\norm{u}_{\mathcal U}:=\norm{u}_{C([0,T];H^{5/2}(\Omega_B))}+\norm{\frac{\partial u}{\partial t}}_{C([0,T]; H^{3/2}(\Omega_B))}+\norm{\frac{\partial^2 u}{\partial t^2}}_{C([0,T]; H^{1/2}(\Omega_B))}.
\\
+\norm{P(u)\cdot \n}_{H^{3/2,3/2}(\Gamma_L\times(0,T))}.
\end{multline}

We are now ready to state the main result of this section. 
\begin{theorem}\label{th:existence-eom-L-c-hat}
Let $(u_0,u_1,V_0)\in H^{3}(\Omega_B)\times H^{3/2}(\Omega_B) \times H^{5/2}(\Omega_L)$ satisfying the following {\em compatibility conditions}: 
\begin{itemize}
\item[(i)] $P(u_0)\cdot\n=0$ on $\Gamma_B$, and $V_0=u_1$ on $\Gamma_L$;
\item[(ii)] $\div V_0=0$ on $\Omega_L$;
\item[(iii)]  There exists $u_2\in H^1(\Omega_B)$ such that $\rho_Bu_2=\div P(u_0)$ in $\Omega_B$;
\item[(iv)] There exist $(V_1,Q_0)\in H^1(\Omega_L)\times H^{3/2}(\Omega_L)$ 
such that 
\begin{equation}\label{eq:compatibility-main1}
\begin{aligned}
&\div V_1=\nabla V_0 :(\nabla v_0)^T&&\text{in }\Omega_L,
\\
& V_1=u_2&&\text{on }\Gamma_L,
\\
&\rho_L V_1=-\nabla Q_0+\mu \Delta V_0&&\text{in }\Omega_L,
\\
&\mathcal T(V_0,Q_0)\cdot n=P(u_0)\cdot\n&&\text{on }\Gamma_L.
\end{aligned}
\end{equation}
\end{itemize}
Then, there exists a non-decreasing positive function $G=G(t)$, $G(0)=0$, $G(t)\to +\infty$ as $t\to +\infty$ such that for all $T\in (0,1)$ satisfying  
\begin{equation}\label{eq:small-time-R}
G(T)(R+1)<1,
\end{equation}
the problem \eqref{eq:eom-L-c-hat}-\eqref{eq:ic-hat} admits a unique solution $(u,v,p)\in \mathcal U\times\mathcal V_2\times \Pi_2$ 
with 
\begin{multline}\label{eq:energy_hateq}
\norm{u}_{\mathcal U}+\norm{v}_{\mathcal V_2}+\norm{p}_{\Pi_2}\le C\left[(1+\norm{v_0}_{H^{5/2}(\Omega_L)})\norm{V_0}_{H^{5/2}(\Omega_L)}\right.
\\
\left.+\norm{u_0}_{H^{3}(\Omega_B)}+\norm{u_1}_{H^{3/2}(\Omega_B)}\right],
\end{multline}
where $C$ is a positive constant independent of $T$. 

In addition, there exists $R\ge \norm{V_0}_{H^{5/2}(\Omega_L)}$ such that 
\[
\chi(\cdot,t):=X+\int^t_0v(\cdot,s)\; ds \in B_R.
\]
\end{theorem}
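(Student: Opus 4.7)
\emph{Proof plan.} The plan is to construct the solution $(u,v,p)$ by a Banach fixed point argument on the fluid velocity, decoupling the coupled system into two sub-problems that can each be handled by the regularity results in Appendix \ref{ap:stokes-navier}. Precisely, on a suitable closed ball of $\mathcal V_2$ consisting of velocities $\tilde v$ with $\tilde v(\cdot,0)=V_0$, first I would solve the linear Navier elasticity system in $\Omega_B$ with Cauchy data $(u_0,u_1)$, traction-free condition on $\Gamma_B$, and Dirichlet condition $\partial_t u=\tilde v$ on $\Gamma_L$. Compatibility conditions (i) and (iii) place us in the hypotheses of Theorem \ref{th:N}, yielding $u\in\mathcal U$ together with the hidden trace regularity $P(u)\cdot\n\in H^{3/2,3/2}(\partial\Omega_B\times(0,T))$. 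Using this traction as a Neumann datum on $\Gamma_L$, I would then solve the modified non-homogeneous Stokes-type problem \eqref{eq:eom-L-c-hat}$_{2,3,7}$ (with the divergence constraint involving $\cof{\hat F}$) and initial datum $V_0$, producing $(v,p)\in\mathcal V_2\times\Pi_2$. The map $\Phi:\tilde v\mapsto v$ is then analyzed for self-mapping and contraction.

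The Stokes step is to be treated as a perturbation of the standard Neumann Stokes problem (Theorems \ref{th:nStokes} and \ref{th:nStokes_r}) by moving every term containing $\cof{\hat F}-I$, or gradients thereof, into the right-hand side. Lemma \ref{lem:estimate_hatchi} supplies both the small-$T$ smallness of $\cof{\hat F}-I$ and the $C([0,T];H^2)$ and boundary control of $\cof{\hat F}$ itself, in precisely the norms needed to absorb the perturbed forcing while preserving the $\mathcal V_2$- and $\Pi_2$-estimates. The compatibility conditions (ii) and (iv) are used to certify that the perturbed data at $t=0$ lie in the compatibility class of the unperturbed Stokes theorem; in particular, \eqref{eq:compatibility-main1}$_4$ matches the interface trace of $P(u(\cdot,0))\cdot\n$. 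Combining the two sub-estimates yields a bound of the form
\[
\norm{\Phi(\tilde v)}_{\mathcal V_2}+\norm{p}_{\Pi_2}\le C\left[(1+\norm{v_0})\norm{V_0}+\norm{u_0}_{H^3}+\norm{u_1}_{H^{3/2}}\right]+CG(T)(R+1)\norm{\tilde v}_{\mathcal V_2},
\]
for some non-decreasing $G$ with $G(0)=0$, so that under \eqref{eq:small-time-R} $\Phi$ maps a ball of radius proportional to the bracketed quantity into itself. Contraction is proved analogously: for $\tilde v_1,\tilde v_2$ in that ball, the difference of the elasticity solutions is driven only by the Dirichlet datum $\tilde v_1-\tilde v_2$ with zero Cauchy data, so Theorem \ref{th:N} together with Lemma \ref{lem:tsigma-s} gives a boundary traction difference of size $CT^{\alpha}\norm{\tilde v_1-\tilde v_2}_{\mathcal V_2}$ for some $\alpha>0$; feeding this into the perturbed Stokes step and using that $\Phi(\tilde v_1)-\Phi(\tilde v_2)$ vanishes at $t=0$ yields a Lipschitz constant $CG(T)(R+1)<1$ by \eqref{eq:small-time-R}. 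Banach's fixed point theorem then produces the unique solution and the estimate \eqref{eq:energy_hateq}.

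For the final claim, set $\chi(X,t):=X+\int_0^t v(X,s)\,ds$. The $\mathcal D$-membership is immediate from $\partial_t\chi=v\in\mathcal V_2$, while the additional $W^{1,\infty}(0,T;H^{5/2}(\Omega_L))$ and $W^{2,\infty}(0,T;H^1(\Omega_L))$ bounds of $\mathcal D_1$ follow by applying Proposition \ref{prop:traceat0} to $v$ and $\partial_t v$ with initial traces $V_0\in H^{5/2}$ and $V_1\in H^1$ respectively, using \eqref{eq:estimate-compatibility-v1}. Since $\nabla\chi(\cdot,0)=I$ and $\nabla\chi-I$ is continuous with size $O(T^{1/2}R)$ by the Sobolev embedding $H^1(0,T;H^2)\hookrightarrow C([0,T];C(\overline{\Omega_L}))$, for $T$ small the Jacobian stays positive, placing $\chi$ in $B_R$ once $R$ is chosen proportional to the right-hand side of \eqref{eq:energy_hateq}. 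The main obstacle throughout is the interface bookkeeping: the hidden regularity $P(u)\cdot\n\in H^{3/2,3/2}$ from Theorem \ref{th:N} is exactly what is needed to activate the Neumann Stokes theorem, and every variable-coefficient contribution coming from $\cof{\hat F}-I$ must be dominated in the anisotropic space--time norms of $\mathcal V_2$ and $\Pi_2$, which is where Lemma \ref{lem:estimate_hatchi} is used repeatedly and in full.
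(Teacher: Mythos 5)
Your decomposition of the proof mirrors the paper's architecture closely: solve the Navier elasticity sub-problem with Dirichlet datum $\tilde v$ on $\Gamma_L$ via Theorem~\ref{th:N}, feed the resulting traction $P(u)\cdot\n$ into a Neumann Stokes sub-problem via Theorem~\ref{th:nStokes_r} treated as a perturbation of the constant-coefficient problem, control all the $\cof{\hat F}-I$ contributions through Lemma~\ref{lem:estimate_hatchi}, close by Banach's fixed-point theorem, and then verify that $\chi\in B_R$. Up to this level of description the plan is sound.

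There is, however, a genuine gap in the set-up of the fixed point. You propose to iterate only on the velocity $\tilde v$, working on a closed ball of $\mathcal V_2$. But when you rewrite \eqref{eq:eom-L-c-hat}$_{3,7}$ as a constant-coefficient Stokes problem plus a perturbation, the right-hand side forcing necessarily contains the pressure: the bulk term acquires $(I-\cof{\hat F})\cdot\nabla p$ (cf.\ \eqref{eq:f}), and the Neumann datum acquires $-p\,(I-\cof{\hat F})\cdot\n$ (cf.\ \eqref{eq:d}). These cannot be moved to the right-hand side while treating $p$ as the Stokes unknown—that would make the constant-coefficient Stokes solve ill-posed as a map on $\tilde v$ alone. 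The only way to make the perturbation a \emph{known} quantity is to take a previous pressure iterate $\tilde p$ as part of the data. This is precisely why the paper sets up the iteration on the pair $(\tilde v,\tilde p)\in\mathcal E\subset\mathcal V_2\times\Pi_2$ (see \eqref{eq:B_M}, the definition of $\mathcal F$, Remark~\ref{rm:fixedpoint0data}, Proposition~\ref{prop:fluid}, and Lemma~\ref{lem:estimatefgd}, all of which carry the pressure through every estimate). The alternative—solving the variable-coefficient Stokes problem \eqref{eq:eom-L-c-hat}$_{2,3,7}$ directly—is not available either, since the only solvability result of that type in the paper, Proposition~\ref{prop:nStokes-hatchi-fgd}, is itself proved by the same two-variable fixed-point argument; invoking it here would be circular. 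So you must enlarge the iteration space to include the pressure, and correspondingly adjust the initial-time constraints in your closed ball (you need $\tilde p(\cdot,0)=Q_0$ and $\partial_t\tilde v(\cdot,0)=V_1$ in addition to $\tilde v(\cdot,0)=V_0$, exactly as in \eqref{eq:B_M}) so that compatibility condition~(iv) can be used at the Stokes step. With that correction the rest of the argument goes through as you describe.
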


\begin{remark}\label{rm:compatibility-conditions-hatchi}
The compatibility conditions $(i)$--$(iv)$ are required because of the regularity of the data and of the corresponding solution. Following the same arguments as in Remark \ref{rm:compatibility-conditions}, we find that 
\begin{equation}\label{eq:estimate-compatibility-V1}
\norm{V_1}_{H^1(\Omega_L)}\le c\left(\norm{V_0}_{H^2(\Omega_L)}\norm{v_0}_{H^2(\Omega_L)}+\norm{u_0}_{H^{3}(\Omega_B)}\right),
\end{equation}
and 
\begin{equation}\label{eq:estimate-compatibility-Q0}
\norm{Q_0}_{H^{3/2}(\Omega_L)}\le c\left[\norm{V_0}_{H^{5/2}(\Omega_L)}\left(\norm{v_0}_{H^2(\Omega_L)}+1\right)+\norm{u_0}_{H^3(\Omega_B)}\right]. 
\end{equation}
\end{remark}

To prove Theorem \ref{th:existence-eom-L-c-hat}, we will use a fixed point argument. Let $(u_0,u_1,V_0)\in H^{3}(\Omega_B)\times H^{3/2}(\Omega_B) \times H^{5/2}(\Omega_L)$ satisfying the compatibility conditions $(i)$--$(iv)$, with $(V_1,Q_0)\in H^1(\Omega_L)\times H^{3/2}(\Omega_L)$, in Theorem \ref{th:existence-eom-L-c-hat}. %For $M>0$, we 
We consider 
\begin{equation}\label{eq:B_M}
\mathcal E:=\{(w,\pi)\in \mathcal V_2\times \Pi_2:\; w(\cdot,0)=V_0,\ \frac{\partial w}{\partial t}(\cdot,0)=V_1,\ \pi(\cdot,0)=Q_0,\}.
\end{equation}

In the following lemma, we summarize some additional regularity properties for $(\tilde v,\tilde p)\in \mathcal V_2\times \Pi_2$. 

\begin{lemma}\label{lem:r-tilde-vq}
Let $(\tilde v,\tilde p)\in \mathcal E$ and $T\in (0,1)$. Then, 
\[\begin{split}
&\tilde v\in C([0,T];H^{5/2}(\Omega_L)),\quad \tilde p\in C([0,T];H^{3/2}(\Omega_L)),\quad 
\frac{\partial }{\partial t}\nabla\tilde v\in C([0,T];L^2(\Omega_L)),
\\
&\nabla\tilde v\in H^1(0,T;L^4(\Gamma_L)),\qquad \tilde v\in H^{7/4}(\Gamma_L\times (0,T)).
\end{split}\]
In addition, the following estimates hold with a constant $C>0$ independent of $T$:
\begin{align}
&\norm{\tilde v}_{C([0,T];H^2(\Omega_L))}\le C\left[T^{1/2}\norm{\tilde v}_{\mathcal V_2}
+\norm{V_0}_{H^2(\Omega_L)}\right]\label{eq:tildev0}
\\
&\norm{\tilde v}_{C([0,T];H^{5/2}(\Omega_L))}\le C\left[\norm{\tilde v}_{\mathcal V_2}
+\norm{V_0}_{H^{5/2}(\Omega_L)}\right]\label{eq:tildev1}
\\
&\norm{\tilde p}_{C([0,T];H^{3/2}(\Omega_L))}\le C\left[\norm{\tilde p}_{\Pi_2}
+\norm{Q_0}_{H^{3/2}(\Omega_L)}\right];\label{eq:tildep1}
\\
&\norm{\frac{\partial}{\partial t}\nabla \tilde v}_{C([0,T];L^2(\Omega_L))}\le C\left[\norm{\tilde v}_{\mathcal V_2}
+\norm{V_1}_{H^1(\Omega_L)}\right];\label{eq:tildev2}
\\
&\norm{\nabla \tilde v}_{H^1(0,T;L^4(\Gamma_L))}\le C\norm{\tilde v}_{H^1(0,T;H^2(\Omega_L))};\label{eq:tildev3}
\\
&\norm{\frac{\partial \tilde v}{\partial t}}_{H^{1/4}(0,T;H^1(\Gamma_L))}\le C \norm{\tilde v}_{\mathcal V_2}, \label{eq:tildev4.0}
\\
&\norm{\tilde v}_{H^{1/2}(0,T;H^{7/4}(\Gamma_L))}\le C\left(T^{1/4}\norm{\tilde v}_{\mathcal V_2}+T^{1/8}\norm{V_0}_{H^{5/2}(\Omega_L)}\right), \label{eq:tildev5}
\\
&\norm{\frac{\partial \tilde v}{\partial t}}_{H^{1/2}(0,T;L^2(\Gamma_L))}\le C\left(T^{1/8}\norm{\tilde v}_{\mathcal V_2}+T^{1/8}\norm{V_1}_{H^1(\Omega_L)}\right). \label{eq:tildev6}\end{align}
\end{lemma}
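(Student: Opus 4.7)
All seven assertions follow from a compact toolkit used repeatedly: (a) Proposition~\ref{prop:traceat0} to upgrade a space--time intersection to a $C([0,T];\cdot)$ statement while keeping track of the initial trace; (b) Lemma~\ref{lem:interpolation}(I1) to interpolate between $L^2(0,T;H^{m_1})$ and $H^1(0,T;H^{m_2})$ with constants independent of $T$; (c) the trace theorem $H^s(\Omega_L)\to H^{s-1/2}(\Gamma_L)$ (valid for $s>1/2$) combined with the two-dimensional Sobolev embedding $H^{1/2}(\Gamma_L)\hookrightarrow L^4(\Gamma_L)$; and (d) Lemma~\ref{lem:tsigma-s}, which converts the ``extra'' time regularity of a function vanishing at $t=0$ into a positive power of $T$. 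A convenient book-keeping remark is that every function constant in time (such as $V_0$, $V_1$, $Q_0$) has vanishing Gagliardo seminorm in time, so its $H^s(0,T;\cdot)$ norm reduces to $T^{1/2}\|\cdot\|$.

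The continuity-in-time bounds are direct. For \eqref{eq:tildev0}, write $\tilde v(t)-V_0=\int_0^t\partial_s\tilde v(s)\,ds$ and apply Cauchy--Schwarz in $H^2(\Omega_L)$ to produce the $T^{1/2}$ factor. Estimates \eqref{eq:tildev1}, \eqref{eq:tildep1} and \eqref{eq:tildev2} all follow from Proposition~\ref{prop:traceat0} with $p=2$: for \eqref{eq:tildev1} take $(s_0,s_1)=(2,3)$ yielding $s=5/2$; for \eqref{eq:tildep1} take $(s_0,s_1)=(1,2)$ yielding $s=3/2$; and for \eqref{eq:tildev2} apply the proposition to $\partial_t\tilde v\in L^2(0,T;H^2)\cap H^1(0,T;L^2)$ with initial trace $V_1\in H^1$ and $(s_0,s_1)=(0,2)$, giving $s=1$. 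The two trace-type bounds are equally short. For \eqref{eq:tildev3}, trace of $\nabla\tilde v\in H^1(\Omega_L)$ lies in $H^{1/2}(\Gamma_L)\hookrightarrow L^4(\Gamma_L)$; applying this pointwise in $t$ together with the analogous bound on $\partial_t\nabla\tilde v$ yields the claim. For \eqref{eq:tildev4.0}, Lemma~\ref{lem:interpolation}(I1) with $(m_1,m_2,\theta)=(2,0,1/4)$ places $\partial_t\tilde v$ in $H^{1/4}(0,T;H^{3/2}(\Omega_L))$, and the spatial trace lifts this to $H^{1/4}(0,T;H^1(\Gamma_L))$.

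The more delicate estimates are \eqref{eq:tildev5} and \eqref{eq:tildev6}, where the exact power of $T$ must be extracted. For \eqref{eq:tildev5}, set $\bar v:=\tilde v-V_0$, which vanishes at $t=0$ and lies in $L^2(0,T;H^3)\cap H^1(0,T;H^2)$; the trace $\bar v|_{\Gamma_L}$ then belongs to $L^2(0,T;H^{5/2}(\Gamma_L))\cap H^1(0,T;H^{3/2}(\Gamma_L))$, and Lemma~\ref{lem:interpolation}(I1) with $\theta=3/4$ places it in $H^{3/4}(0,T;H^{7/4}(\Gamma_L))$, since $\tfrac{1}{4}\cdot\tfrac{5}{2}+\tfrac{3}{4}\cdot\tfrac{3}{2}=\tfrac{7}{4}$. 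Lemma~\ref{lem:tsigma-s} with $\sigma=3/4$, $s=1/2$ then supplies the $T^{1/4}$ gain. The constant remainder $V_0$ satisfies $\norm{V_0}_{H^{1/2}(0,T;H^{7/4}(\Gamma_L))}\le T^{1/2}\norm{V_0|_{\Gamma_L}}_{H^{7/4}(\Gamma_L)}\le CT^{1/8}\norm{V_0}_{H^{5/2}(\Omega_L)}$, using trace and $T\le 1$. Estimate \eqref{eq:tildev6} is completely analogous with $\bar w:=\partial_t\tilde v-V_1\in L^2(0,T;H^2)\cap H^1(0,T;L^2)$ and initial trace zero: Lemma~\ref{lem:interpolation}(I1) with $\theta=5/8$ gives $\bar w\in H^{5/8}(0,T;H^{3/4}(\Omega_L))$; the trace into $H^{1/4}(\Gamma_L)\hookrightarrow L^2(\Gamma_L)$ then places $\bar w|_{\Gamma_L}$ in $H^{5/8}(0,T;L^2(\Gamma_L))$, and Lemma~\ref{lem:tsigma-s} with $(\sigma,s)=(5/8,1/2)$ produces the $T^{1/8}$ factor; the $V_1$ contribution is estimated exactly as above.

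The only real obstacle throughout is the bookkeeping of exponents: one must choose the interpolation parameter $\theta$ so that (i) the spatial target exceeds $1/2$, so that the trace to $\Gamma_L$ is available, and (ii) the time index $\sigma$ exceeds the desired index $s$ by precisely the amount $\sigma-s$ required to match the advertised power of $T$. Once these exponents are lined up the constants are automatically independent of $T$, because that independence is already built into Proposition~\ref{prop:traceat0}, Lemma~\ref{lem:interpolation} and Lemma~\ref{lem:tsigma-s}.
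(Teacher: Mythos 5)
Your proof is essentially the paper's own argument: the same toolkit of Proposition~\ref{prop:traceat0}, Lemma~\ref{lem:interpolation}(I1), Lemma~\ref{lem:tsigma-s}, and the trace/Sobolev embeddings, with the same choices of interpolation exponents for \eqref{eq:tildev5} and \eqref{eq:tildev6}. The minor differences are stylistic: for \eqref{eq:tildev0} you give a direct fundamental-theorem-of-calculus-plus-Cauchy--Schwarz argument (which is actually more transparent than the paper's blanket reference to Proposition~\ref{prop:traceat0}, since that proposition does not on its own produce the $T^{1/2}$ prefactor), and for \eqref{eq:tildev5} you trace to $\Gamma_L$ and then interpolate, whereas the paper interpolates in $\Omega_L$ first (via Remark~\ref{rm:trace-regularity-v}) and then traces --- these commute and land on the same $H^{3/4}(0,T;H^{7/4}(\Gamma_L))$ space.

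One item in the lemma's statement is not addressed in your write-up: the membership $\tilde v\in H^{7/4}(\Gamma_L\times(0,T))$. The paper obtains this at the end of the proof by combining the scale $\tilde v|_{\Gamma_L}\in H^\theta\bigl(0,T;H^{\frac 74(1-\theta)}(\Gamma_L)\bigr)$ (from Lemma~\ref{lem:interpolation}) with the fact that $\tilde v|_{\Gamma_L}\in H^{3/2}(0,T;L^2(\Gamma_L))$ (from Remark~\ref{rm:trace-regularity-v}). Your argument should close this remaining claim as well; everything else is in order.
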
 

\begin{proof}
Under the stated regularity of $\tilde v$ and $\tilde p$, properties \eqref{eq:tildev0}-\eqref{eq:tildep1} immediately follow from Proposition \ref{prop:traceat0}. In addition, 
\[
\frac{\partial \tilde v}{\partial t}\in L^2(0,T;H^2(\Omega_L))\cap H^1(0,T;L^2(\Omega_L)),
\]
and again by Proposition \ref{prop:traceat0}, we have that $\partial \tilde v/\partial t\in C([0,T];H^1(\Omega_L))$ and 
\[
\norm{\frac{\partial }{\partial t}\nabla \tilde v}_{C([0,T];L^2(\Omega_L))}\le C_1\left[\norm{\tilde v}_{\mathcal V_2}+\norm{\frac{\partial}{\partial t}\nabla \tilde v(\cdot,0)}_{L^2(\Omega_L)}\right].
\]
Since by hypothesis $\nabla \tilde v\in H^1(0,T;H^1(\Omega_L))$, by the trace theorem and Sobolev embedding, we find that 
\[
\nabla \tilde v\in H^1(0,T;L^4(\Gamma_L))
\quad\text{with }\norm{\nabla \tilde v}_{H^1(0,T;L^4(\Gamma_L))}\le C_2\norm{\tilde v}_{\mathcal V_2}.
\]
By Remark \ref{rm:trace-regularity-v} (with $\theta=1/4$) and the trace theorem, we immediately have the estimate \eqref{eq:tildev4.0} for $\partial \tilde v/\partial t$ on $\Gamma_L$. 

Finally, by Remark \ref{rm:trace-regularity-v} and since $\tilde v\in \mathcal V_2$, we have that 
\[\begin{aligned}
&\tilde v-V_0\in {}_0H^{3/4}(0,T;H^{7/4}(\Gamma_L)) 
\\
&\frac{\partial \tilde v}{\partial t}-V_1\in {}_0H^{5/8}(0,T;H^{1/4}(\Gamma_L)) 
\end{aligned}\]
where we recall that ${}_0H^\sigma(0,T;\mathcal X):=\{f=f(x,t)\in H^\sigma(0,T;\mathcal X):\; f(\cdot,0)=0\}$ for $\sigma\in (1/2,3/2)$. Apply Lemma \ref{lem:tsigma-s} with $\sigma=3/4$ and $s=1/2$ to $\tilde v-V_0$, and with $\sigma=5/8$ and $s=1/2$ to $\displaystyle\frac{\partial \tilde v}{\partial t}-V_1$, respectively, we find 
\[\begin{split}
\norm{\tilde v-V_0}_{H^{1/2}(0,T;H^{7/4}(\Gamma_L))}&\le C_3T^{1/4}\norm{\tilde v-V_0}_{H^{3/4}(0,T;H^{7/4}(\Gamma_L))},
\\
\norm{\frac{\partial \tilde v}{\partial t}-V_1}_{H^{1/2}(0,T;L^2(\Gamma_L))}&\le C_4 T^{1/8}\norm{\frac{\partial \tilde v}{\partial t}-V_1}_{H^{5/8}(0,T;L^2(\Gamma_L))},
\end{split}\]
for some positive constants $C_3$ and $C_4$ independent of $T$. Since 
\[
\norm{V_0}_{H^{1/2}(0,T;H^{7/4}(\Gamma_L))}=T^{1/2}\norm{V_0}_{H^{7/4}(\Gamma_L)}\le C_5T^{1/2}\norm{V_0}_{H^{5/2}(\Omega_L)}
\]
and 
\[
\norm{V_1}_{H^{5/8}(0,T;L^2(\Gamma_L))}=T^{1/2}\norm{V_1}_{L^2(\Gamma_L)}\le C_6T^{1/2}\norm{V_1}_{H^{1}(\Omega_L)},
\]
we conclude that 
\[\begin{split}
\norm{\tilde v}_{H^{1/2}(0,T;H^{7/4}(\Gamma_L))}&\le C_7\left(T^{1/4}\norm{\tilde v}_{\mathcal V_2}+T^{1/8}\norm{V_0}_{H^{5/2}(\Omega_L)}\right),
\\
\norm{\frac{\partial \tilde v}{\partial t}}_{H^{1/2}(0,T;L^2(\Gamma_L))}&\le C_8\left(T^{1/8}\norm{\tilde v}_{\mathcal V_2}+T^{1/8}\norm{V_1}_{H^1(\Omega_L)}\right),
\end{split}\]
where $C_i$, for $i=5,...,8$ are positive constants independent of $T$, and we have used the fact that $T\in (0,1)$. This proves the estimates \eqref{eq:tildev5} and \eqref{eq:tildev6}. Finally, by Lemma \ref{lem:interpolation} part $(I1)$, we have that 
\[
\tilde v\in H^\theta(0,T;H^m(\Gamma_L))\qquad\text{for all }\theta\in [0,1]\text{ and }m=\frac 74(1-\theta).
\]
Thus, in particular $\tilde v\in L^2(0,T;H^{7/4}(\Gamma_L))$. In addition, from the above considerations, we have that $\tilde v\in H^{3/2}(0,T;L^2(\Gamma_L))$. We then conclude that $\tilde v\in H^{7/4}(\Gamma_L\times(0,T))$. 
\end{proof}

Given $(\tilde v,\tilde p)\in \mathcal E$, we look for a solution $(u,v,p)$ to the following (decoupled) problems: 
\begin{description}[nolistsep,leftmargin=0pt]
\item[{\bf The elastic problem.}]
\begin{equation}\label{eq:elastic}
\begin{aligned}
&\rho_B\frac{\partial^2 u}{\partial t^2}=\div P(u) &&\text{in }\Omega_B\times(0,T),
\\
&P(u)\cdot \n=0 &&\text{on }\Gamma_B\times(0,T),
\\
&\frac{\partial u}{\partial t}=\tilde v &&\text{on }\Gamma_L\times(0,T),
\\
&u(\cdot,0)=u_0 &&\text{in }\Omega_B,
\\
&\frac{\partial u}{\partial t}(\cdot,0)=u_1 &&\text{in }\Omega_B,
\end{aligned}
\end{equation}
with the first Piola-Kirchhoff stress $P$ defined in \eqref{eq:piola}. 
\item[{\bf The fluid problem.}]
\begin{equation}\label{eq:fluid}
\begin{aligned}
&\div v=g(\tilde v,\hat \chi)&&\text{in }\Omega_L\times (0,T),
\\
&\rho_{_{L}}\frac{\partial v}{\partial t}=\div \mathcal T(v,p)+f(\tilde v,\tilde p,\hat \chi) &&\text{in }\Omega_L\times (0,T),
\\
&\mathcal T(v,p)\cdot \n=d(u,\tilde v,\tilde p,\hat \chi)&&\text{on }\Gamma_L\times(0,T),
\\
&v(\cdot,0)=V_0(\cdot,0) &&\text{in }\Omega_L,
\end{aligned}
\end{equation}
where the Cauchy stress tensor $\mathcal T$ is the one defined in \eqref{eq:cauchy} and the forcing terms are given by 
\begin{align}
&g(\tilde v,\hat \chi):=\div \tilde v-\cof{\hat F}:\nabla \tilde v 
=(I-\cof{\hat F}):\nabla \tilde v,\label{eq:g}
\\
& \mathcal T_{\hat \chi}(\tilde v,\tilde p)=-\tilde p\cof{\hat F}+\mu\nabla \tilde v\cdot(\cof{\hat F})^T\cdot\cof{\hat F}+\mu\cof{\hat F}\cdot(\nabla \tilde v)^T\cdot\cof{\hat F}\nonumber
\\
&f(\tilde v,\tilde p,\hat \chi):=\div \mathcal T_{\hat\chi}(\tilde v,\tilde p)-\div \mathcal T(\tilde v,\tilde p)\label{eq:f}
\\
&\qquad\qquad = (I-\cof{\hat F})\cdot\nabla \tilde p+\mu\frac{\partial}{\partial X_\beta}\left[(\cof{\hat F}_{\alpha\beta}\cof{\hat F}_{\alpha\gamma}-\delta_{\alpha\beta}\delta_{\alpha\gamma})\frac{\partial \tilde v}{\partial X_\gamma}\right]\nonumber
\\
&\qquad \qquad \qquad \qquad \qquad \qquad \quad+\mu\frac{\partial}{\partial X_j}\left[ (\cof{\hat F}_{ik}\cof{\hat F}_{\ell j}-\delta_{ik}\delta_{\ell j})\frac{\partial \tilde v_\ell}{\partial X_k}\right]e_i, \nonumber
\\
&d(u,\tilde v,\tilde p,\hat \chi)=[\mathcal T(\tilde v,\tilde p)-\mathcal T_{\hat \chi}(\tilde v,\tilde p)+P(u)]\cdot \n\label{eq:d}
\\
&\qquad\qquad=-\tilde p(I-\cof{\hat F})\cdot \n\nonumber
\\
&\qquad\qquad\quad+\mu\left[(\delta_{\gamma \delta}\delta_{\gamma \beta}-\cof{\hat F}_{\gamma \delta}\cof{\hat F}_{\gamma\beta})\frac{\partial \tilde v_{\alpha}}{\partial X_\delta} \right.
\\
&\qquad\qquad\qquad\qquad\qquad \left. +
(\delta_{\alpha \iota}\delta_{\kappa\beta}-\cof{\hat F}_{\alpha\iota}\cof{\hat F}_{\kappa\beta})\frac{\partial \tilde v_{\kappa}}{\partial X_\iota}\right]\n_\beta e_\alpha 
+P(u)\cdot \n.\nonumber
\end{align}
\item[{\bf The flow map problem.}]
\begin{equation}\label{eq:flow}
\begin{aligned}
&\frac{d \chi}{d t}=v &&\text{in }\Omega_L\times (0,T),
\\
&\chi(\cdot,0)=I&&\text{in }\Omega_L. 
\end{aligned}
\end{equation}
\end{description}
In the above systems, the initial conditions satisfy the compatibility conditions \emph{(i)--(iii)} in Theorem \ref{th:existence-eom-L-c-hat}. 
Once the existence of $(u,v,p)\in \mathcal U\times \mathcal V_2\times \Pi_2$ solution of \eqref{eq:elastic}--\eqref{eq:fluid} 
 is proved, we consider the map:
\[\begin{split}
\mathcal F:\; (\tilde v,\tilde p)\in \mathcal E\mapsto \mathcal F(\tilde v,\tilde p)=(v,p)&\text{ solution to \eqref{eq:fluid}}
\\
&\text{ (corresponding to the data $(u_0,u_1,V_0)$),}
\end{split}\]
and prove that $\mathcal F$ admits a fixed point $(\bar v,\bar p)$. Let $\bar u$ be the corresponding solution of \eqref{eq:elastic} with $\tilde v=\bar v$, then the triple $(\bar u,\bar v,\bar p)$ solves \eqref{eq:eom-L-c-hat}. 

\begin{remark}\label{rm:fixedpoint0data}
For the proof of the fixed point, we will use Banach fixed point theorem. We notice that $\mathcal E$ is a complete metric space with the metric induced by the norm $\norm{\cdot}_{\mathcal V_2}+\norm{\cdot}_{\Pi_2}$ on $\mathcal V_2\times \Pi_2$. To prove that $\mathcal F$ is a contraction, i.e., 
\[
\norm{\mathcal F(\tilde v_1,\tilde p_1)-\mathcal F(\tilde v_1,\tilde p_1)}_{\mathcal V_2\times \Pi_2}\le \alpha\norm{(\tilde v_1,\tilde p_1)-(\tilde v_2,\tilde p_2)}_{\mathcal V_2\times \Pi_2},
\]
for some $\alpha\in [0,1)$, it is enough to show that the linear map  
\[\begin{split}
{}_0\mathcal F:\; (\tilde v,\tilde p)\in {}_0\mathcal E
\mapsto \;&{}_0\mathcal F(\tilde v,\tilde p)=(v,p)\text{ solution to \eqref{eq:fluid} }
\\
&\text{(corresponding to the data $(u_0=0,u_1=0,V_0=0)$)}
\end{split}\]
satisfies the following estimate for some $\beta\in [0,1)$: 
\[
\norm{ {}_0\mathcal F(w,\pi)}_{\mathcal V_2\times\Pi_2}\le\beta \norm{ (w,\pi)}_{\mathcal V_2\times\Pi_2}
\]  
for all $(w,\pi)\in {}_0\mathcal E$. In the latter displayed equation   
\begin{equation}\label{eq:B_M_0data}
{}_0\mathcal E:=\left\{(\tilde v,\tilde p)\in \mathcal E:\; \tilde v(\cdot,0)=\frac{\partial \tilde v}{\partial t}(\cdot,0)=\tilde p(\cdot,0)=0\right\}.
\end{equation}
Our claim follows from the fact that equations \eqref{eq:elastic} and \eqref{eq:fluid} are linear, non-homogeneous partial differential equations with forcing terms that are linear in their arguments. Thus, as long as solutions to \eqref{eq:elastic} and \eqref{eq:fluid} exist, the couple 
\[
(w,\pi):=\mathcal F(\tilde v_1,\tilde p_1)-\mathcal F(\tilde v_1,\tilde p_1)
\]
can be considered as solution to \eqref{eq:fluid} with $(\tilde v,\tilde p)=(\tilde v_1,\tilde p_1)-(\tilde v_2,\tilde p_2)$ and zero initial data. 
\end{remark}

For the existence of solutions to the problems \eqref{eq:elastic} and \eqref{eq:fluid} we will use Theorems \ref{th:N} and \ref{th:nStokes_r}, respectively, provided that the data satisfy the regularity assumptions of the two theorems. This will be done next. 

\begin{proposition}[Well-posedness of the elastic problem \eqref{eq:elastic}]\label{prop:elastic}
Let $T\in (0,1)$, $(\tilde v,\tilde p)\in \mathcal E$ with $\mathcal E$ defined in \eqref{eq:B_M}, and $(u_0,u_1)\in H^{3}(\Omega_B)\times H^{3/2}(\Omega_B)$ satisfying the compatibility conditions 
\[\begin{aligned}
&P(u_0)\cdot\n=0&&\text{ on }\Gamma_B,
\\
&u_1=\tilde v(\cdot,0)&& \text{ on }\Gamma_L,
\\
&\frac{\partial \tilde v}{\partial t}(\cdot,0)=\frac{1}{\rho_B}\div P(u_0)&&\text{on }\Gamma_L. 
\end{aligned}\]
Then, there exists a unique solution $u\in \mathcal U$ to \eqref{eq:elastic} satisfying the following estimates
\begin{equation}\label{eq:estimate-elastic}
\begin{split}
&\norm{u}_{\mathcal U}\le C\left[(1+T^{1/2})\norm{u_0}_{H^{3}(\Omega_B)}+\norm{u_1}_{H^{1+1/2}(\Omega_B)}+T^{1/8}\norm{\tilde v}_{\mathcal V_2}\right.
\\
&\qquad\qquad\qquad\qquad\qquad\qquad\qquad\qquad \left.+T^{1/8}\norm{V_0}_{H^2(\Omega_L)}+T^{1/8}\norm{V_1}_{H^1(\Omega_L)}\right],
\\
&\norm{P(u)\cdot\n}_{H^{3/2,3/2}(\Gamma_L\times(0,T))}\le C\left[T^{1/2}\norm{u_0}_{H^3(\Omega_B)}+T^{1/4}\norm{\tilde v}_{\mathcal V_2}\right.
\\
&\qquad\qquad\qquad\qquad\qquad\qquad\qquad\qquad\qquad\qquad\qquad\qquad \left.+T^{1/8}\norm{V_0}_{H^{5/2}(\Omega_L)}\right]
\\
&\norm{P(u)\cdot \n}_{H^1(0,T;H^{1/2}(\Gamma_L)}) \le C\left(T^{1/2}\norm{u}_{\mathcal U}+ T^{1/4}\norm{\tilde v}_{\mathcal V_2} +T^{1/8}\norm{V_0}_{H^2(\Omega_L)}\right),
\end{split}
\end{equation}
for some positive constant $C$ independent of $T$. 
\end{proposition}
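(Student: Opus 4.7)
The plan is to reduce the proposition to a direct application of Theorem \ref{th:N} in the appendix, which handles the linear Navier system of elasticity with a traction-free Neumann condition on part of the boundary and a prescribed Dirichlet datum for the velocity on the rest. That theorem already provides both the natural regularity $u\in\mathcal U$ and, crucially, the hidden-regularity estimate $P(u)\cdot n\in H^{3/2,3/2}(\partial\Omega_B\times(0,T))$ that adapts the techniques of \cite{LLT}. What remains, then, is two things: (a) to verify that the data fall in the hypotheses of Theorem \ref{th:N}, and (b) to track the explicit powers of $T$ on the right-hand side of \eqref{eq:estimate-elastic}.

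For (a), the compatibility hypotheses in the statement — $P(u_0)\cdot n=0$ on $\Gamma_B$, $u_1=\tilde v(\cdot,0)$ on $\Gamma_L$, and $\rho_B^{-1}\div P(u_0)=\partial_t\tilde v(\cdot,0)$ on $\Gamma_L$ — are exactly the matching conditions required at $t=0$ for a strong solution in $\mathcal U$. For the Dirichlet datum $\tilde v|_{\Gamma_L}$, estimates \eqref{eq:tildev5}-\eqref{eq:tildev6} of Lemma \ref{lem:r-tilde-vq} place it in $H^{1/2}(0,T;H^{7/4}(\Gamma_L))$ and $\partial_t\tilde v\in H^{1/2}(0,T;L^2(\Gamma_L))$, precisely the regularity class needed to invoke Theorem \ref{th:N}. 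This yields existence and uniqueness of $u\in\mathcal U$ together with an abstract bound
\[
\norm{u}_{\mathcal U}+\norm{P(u)\cdot n}_{H^{3/2,3/2}(\partial\Omega_B\times(0,T))}\le C\bigl(\norm{u_0}_{H^3(\Omega_B)}+\norm{u_1}_{H^{3/2}(\Omega_B)}+\mathcal N(\tilde v)\bigr),
\]
where $\mathcal N(\tilde v)$ collects the boundary norms of $\tilde v$ on $\Gamma_L$.

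To obtain (b), I exploit linearity to split $u=u^{(0)}+u^{(1)}$, with $u^{(0)}$ carrying the initial data and zero boundary datum, and $u^{(1)}$ carrying the boundary datum $\tilde v$ and zero initial data. The piece $u^{(0)}$ produces the $(1+T^{1/2})\norm{u_0}_{H^{3}(\Omega_B)}+\norm{u_1}_{H^{3/2}(\Omega_B)}$ contribution, where the $T^{1/2}$ arises from integrating the natural energy identity for the Navier system over $(0,T)$. For $u^{(1)}$, the datum $\tilde v$ does not vanish at $t=0$, so I subtract a standard lift of $V_0=\tilde v(\cdot,0)$ and apply Lemma \ref{lem:tsigma-s} to the remainder, which does vanish at $t=0$: starting from estimates \eqref{eq:tildev5}-\eqref{eq:tildev6} controlled in $H^{3/4}$ and $H^{5/8}$ in time, lowering to the $H^{1/2}$ in-time norm fed into Theorem \ref{th:N} produces the advertised $T^{1/4}$ factor on $\norm{\tilde v}_{\mathcal V_2}$ and $T^{1/8}$ factors on $\norm{V_0}_{H^2}$ and $\norm{V_1}_{H^1}$. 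The hidden-regularity bound \eqref{eq:estimate-elastic}$_2$ is treated in the same way: the compatibility condition $P(u_0)\cdot n=0$ on $\Gamma_B$, together with $\mathcal T(V_0,Q_0)\cdot n = P(u_0)\cdot n$ on $\Gamma_L$ at $t=0$, forces the time-trace of the traction to vanish on all of $\partial\Omega_B$, so that Lemma \ref{lem:tsigma-s} applied to the whole traction pulls out the full $T^{1/2}\norm{u_0}_{H^3(\Omega_B)}$ and $T^{1/4}\norm{\tilde v}_{\mathcal V_2}$ factors. The third estimate in the weaker space $H^1(0,T;H^{1/2}(\Gamma_L))$ is then obtained by interpolation between $\norm{u}_{\mathcal U}$ (which controls $P(u)\cdot n$ in $C([0,T];H^{1/2}(\Gamma_L))$) and the hidden-regularity bound, together with the same Lemma \ref{lem:tsigma-s} trick. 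The main obstacle — really the only one — is the bookkeeping: every term on the right-hand side of the abstract Theorem \ref{th:N} estimate must be written as a quantity vanishing at $t=0$ so that Lemma \ref{lem:tsigma-s} produces a strictly positive power of $T$, and the compatibility conditions assumed in Proposition \ref{prop:elastic} are precisely what make this possible at the interface $\Gamma_L$ where the initial data is non-trivial.
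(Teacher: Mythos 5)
Your high-level strategy is right: this proposition is indeed a direct consequence of Theorem~\ref{th:N}, and the work reduces to checking that the Dirichlet datum $\tilde v|_{\Gamma_L}$ has the regularity required there and then inserting the quantitative estimates of Lemma~\ref{lem:r-tilde-vq}. That is exactly what the paper's proof does, and more briefly: it notes that $\tilde v|_{\Gamma_L}\in L^2(0,T;H^{5/2}(\Gamma_L))\cap H^{3/2}(0,T;H^{1/2}(\Gamma_L))$, then applies \eqref{eq:estimate-N} together with \eqref{eq:tildev0}, \eqref{eq:tildev5}, \eqref{eq:tildev6}; the powers of $T$ you want to track are already built into those inequalities. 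You do not need to re-split $u=u^{(0)}+u^{(1)}$ or re-run the ``subtract a lift and apply Lemma~\ref{lem:tsigma-s}'' argument inside this proof — that work has already been done once and for all inside the proof of Lemma~\ref{lem:r-tilde-vq} and inside Theorem~\ref{th:N}.

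However, two of the mechanisms you invoke to explain where the powers of $T$ come from are incorrect. First, the factor $T^{1/2}$ multiplying $\norm{u_0}_{H^3(\Omega_B)}$ does not ``arise from integrating the natural energy identity over $(0,T)$.'' It arises in the proof of Theorem~\ref{th:N} because the velocity boundary condition on $\Gamma_1$ is converted to a Dirichlet condition $u=h$ on $\Gamma_1$ with $h(x,t)=\int_0^t w(x,s)\,ds+u_0(x)$; the constant-in-time piece $u_0|_{\Gamma_1}$ contributes $T^{1/2}\norm{u_0}_{H^{5/2}(\Gamma_1)}$ to the $L^2$-in-time component of $\norm{h}_{H^{5/2,5/2}}$. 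The energy balance in Lemma~\ref{lem:energy} plays no role. Second, and more seriously, your derivation of the hidden-regularity bound \eqref{eq:estimate-elastic}$_2$ is flawed: you assert that the compatibility conditions ``force the time-trace of the traction to vanish on all of $\partial\Omega_B$'' so that Lemma~\ref{lem:tsigma-s} may be applied directly to $P(u)\cdot\n$. This is false. At $t=0$ the traction on $\Gamma_L$ equals $P(u_0)\cdot\n=\mathcal T(V_0,Q_0)\cdot\n$, which is not zero in general (the solid is pre-stressed, $u_0\neq 0$). Lemma~\ref{lem:tsigma-s} therefore cannot be applied to $P(u)\cdot\n$ on $\Gamma_L$, and your proposed route to \eqref{eq:estimate-elastic}$_2$ fails. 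The $T^{1/2}\norm{u_0}_{H^3}$ and $T^{1/4}\norm{\tilde v}_{\mathcal V_2}$ factors in that estimate come instead, respectively, from the $T^{1/2}$ factor already present in \eqref{eq:estimate-N}$_2$ (same source as above) and from \eqref{eq:tildev5}, and need no further manipulation.
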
 

\begin{proof}
Given the regularity of $\tilde v$, and by Remark \ref{rm:trace-regularity-v}, it follows that 
\[
\tilde v|_{\Gamma_L}\in L^2(0,T;H^{5/2}(\Gamma_L))\cap H^{3/2}(0,T;H^{1/2}(\Gamma_L)).
\]
The proposition then follows from the estimates \eqref{eq:tildev0}, \eqref{eq:tildev5} and \eqref{eq:tildev6}, after applying Theorem \ref{th:N} and the estimates \eqref{eq:estimate-N}. 
\end{proof}

Next proposition will ascertain the well-posedness of the fluid problem. 

\begin{proposition}[Well-posedness of the fluid problem \eqref{eq:fluid}]\label{prop:fluid}
Consider $(\tilde v,\tilde p)\in \mathcal E$ with $\mathcal E$ defined in \eqref{eq:B_M}, and $u\in C([0,T]; H^{5/2}(\Omega_B))$ satisfying \eqref{eq:estimate-elastic} from Proposition \ref{prop:elastic}. Consider the functions $g$, $f$, and $d$ defined in equations \eqref{eq:g}, \eqref{eq:f}, and \eqref{eq:d}, respectively (with $\hat F=\nabla \hat \chi$, $\hat\chi\in B_R$ and $B_R$ defined in \eqref{eq:B_R}). Fix $T\in (0,1)$. 

Then, corresponding to $V_0$ (satisfying the compatibility conditions {\em (i)--(iv)} in Theorem \ref{th:existence-eom-L-c-hat}), there exists a unique solution $(v,p)\in \mathcal V_2\times \Pi_2$ to \eqref{eq:fluid}.  
In addition, 
$(v,p)$ satisfies the following estimate
\begin{equation}\label{eq:estimate-fluid}
\begin{split}
\norm{v}_{{\mathcal V}_2}+\norm{p}_{\Pi_2}\le C_T & (R+1)  \left[T^a(\norm{\tilde v}_{\mathcal V_2}+\norm{\tilde p}_{\Pi_2})
+\norm{V_0}_{H^{5/2}(\Omega_L)}+\norm{V_1}_{H^1(\Omega_L)}\right. 
\\
&\left.\phantom{T^{1/4}}+\norm{Q_0}_{H^{3/2}(\Omega_L)}+\norm{u_0}_{H^{3}(\Omega_B)}+\norm{u_1}_{H^{3/2}(\Omega_B)}
\right], 
\end{split}\end{equation}
where $C_T$ is a positive constant depending on $T$ (and non-decreasing with $T$), and $a\in (0,1)$. 
\end{proposition}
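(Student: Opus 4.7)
The plan is to reduce \eqref{eq:fluid} to a non-homogeneous Stokes system with source data $(g,f,d)$ and then invoke the regularity theorem \ref{th:nStokes_r} from the appendix, after checking that each source has the required space-time regularity and that the Stokes compatibility conditions at $t=0$ are satisfied. The mechanism generating the small factor $T^a$ in \eqref{eq:estimate-fluid} is that $\hat\chi(\cdot,0)=I$ forces $\cof{\hat F}(\cdot,0)=I$, so every coefficient of the form $I-\cof{\hat F}$, $\cof{\hat F}\cdot(\cof{\hat F}-I)$, etc., appearing in $g$, $f$, $d$ vanishes at $t=0$; combined with Lemma \ref{lem:estimate_hatchi}, in particular \eqref{eq:iv2}, \eqref{eq:iv4}, \eqref{eq:iv4.1}, \eqref{eq:iv7}, this yields $L^\infty$-norms of order $T^{1/2}$ in space-time and controlled fractional-time regularity, both scaling as positive powers of $T$ times $R$.

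For the divergence source $g=(I-\cof{\hat F}):\nabla \tilde v$, I would bound $g$ in the relevant component of the source space (essentially $L^2(0,T;H^2(\Omega_L))\cap H^1(0,T;H^1(\Omega_L))$) by the product rule together with Sobolev/H\"older inequalities, using Lemma \ref{lem:estimate_hatchi} for the $\cof{\hat F}$-factors and Lemma \ref{lem:r-tilde-vq} for $\tilde v$. Compatibility condition \eqref{eq:compatibility-main1}$_1$ together with Piola's identity \eqref{eq:piola-condition} ensure $g(\cdot,0)=0$, which is needed to apply the Stokes theorem and to use Lemma \ref{lem:tsigma-s} to convert time-smoothness into powers of $T$. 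The body force $f$ decomposes as $(I-\cof{\hat F})\cdot\nabla \tilde p$ plus second-order differential expressions with coefficients $\cof{\hat F}_{ij}\cof{\hat F}_{k\ell}-\delta_{ij}\delta_{k\ell}$ applied to $\nabla \tilde v$; estimating each piece in the corresponding Stokes source space relies on Lemmas \ref{lem:interpolation}, \ref{lem:holder-st}, \ref{lem:ba7}, and \ref{lem:tsigma-s}, the latter used to trade a bit of time-regularity for a factor $T^a$ whenever the corresponding source vanishes at $t=0$.

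The boundary data $d=(\mathcal T(\tilde v,\tilde p)-\mathcal T_{\hat\chi}(\tilde v,\tilde p))\cdot\n+P(u)\cdot\n$ is the delicate point. I would split it into the mismatch part, which is bilinear in $\cof{\hat F}-I$ (or quadratic in $\cof{\hat F}$ with a factor $\cof{\hat F}-I$) and the traces of $(\nabla \tilde v,\tilde p)$, and is controlled on $\Gamma_L$ via Lemma \ref{lem:estimate_hatchi} \eqref{eq:iv4}, \eqref{eq:iv7} together with Lemma \ref{lem:r-tilde-vq} (notably \eqref{eq:tildev3}--\eqref{eq:tildev6}) and the $H^{1/4}(0,T;L^2(\Gamma_L))$ piece of the $\Pi_2$-norm of $\tilde p$; and the elastic traction $P(u)\cdot\n$, which by Proposition \ref{prop:elastic} lies in $H^{3/2,3/2}(\Gamma_L\times(0,T))$ with a bound proportional to $T^a$ times the data. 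The trace-compatibility $d(\cdot,0)=\mathcal T(V_0,Q_0)\cdot\n - P(u_0)\cdot\n = 0$ on $\Gamma_L$ is precisely condition \eqref{eq:compatibility-main1}$_4$.

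Collecting these bounds, each source contributes at most $C_T(R+1)T^a$ times the $\mathcal V_2\times\Pi_2$-norm of $(\tilde v,\tilde p)$ plus the relevant norms of the initial data $(u_0,u_1,V_0,V_1,Q_0)$, and Theorem \ref{th:nStokes_r} then yields a unique $(v,p)\in \mathcal V_2\times\Pi_2$ satisfying \eqref{eq:estimate-fluid}; the bounds \eqref{eq:estimate-compatibility-V1} and \eqref{eq:estimate-compatibility-Q0} absorb the $(V_1,Q_0)$-norms into the $(u_0,v_0,V_0)$-norms. The principal obstacle is matching the boundary regularity of $d$ to the exact trace space demanded by Theorem \ref{th:nStokes_r}: this is where one genuinely needs the ``hidden regularity'' $H^{3/2,3/2}$ of $P(u)\cdot\n$ supplied by Theorem \ref{th:N}, as well as the boundary fractional-time estimate \eqref{eq:tildev4.0}--\eqref{eq:tildev6} on the velocity trace and the $H^{1/4}(0,T;L^2(\Gamma_L))$ control of $\partial_t\tilde p$ built into $\Pi_2$.
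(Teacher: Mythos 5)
Your overall strategy is the same as the paper's: reduce \eqref{eq:fluid} to the non-homogeneous Stokes problem, verify that $(g,f,d)$ lie in the source spaces of Theorem \ref{th:nStokes_r} via Lemmas \ref{lem:estimate_hatchi}, \ref{lem:r-tilde-vq}, \ref{lem:interpolation}, \ref{lem:holder-st}, \ref{lem:ba7}, \ref{lem:tsigma-s}, use the hidden $H^{3/2,3/2}$ regularity of $P(u)\cdot\n$ for the boundary datum, check the Stokes compatibility conditions at $t=0$, and read off \eqref{eq:estimate-fluid}. This matches the structure of the paper, which states the reduction and defers the force estimates to Lemma \ref{lem:estimatefgd}.

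There is, however, a concrete error in your treatment of the boundary datum. You claim $d(\cdot,0)=\mathcal T(V_0,Q_0)\cdot\n - P(u_0)\cdot\n = 0$. From \eqref{eq:d} and $\cof{\hat F}(\cdot,0)=I$, one in fact has
\[
d(\cdot,0)=\bigl[\mathcal T(V_0,Q_0)-\mathcal T_{\hat\chi}(V_0,Q_0)+P(u_0)\bigr]\cdot\n\Big|_{t=0}=P(u_0)\cdot\n,
\]
which is \emph{not} zero in general (the problem is precisely one of a pre-stressed solid with $u_0\neq0$). The Stokes compatibility required by Theorem \ref{th:nStokes_r} is not $d(\cdot,0)=0$ but the scalar relation \eqref{eq:compatibility_Stokes}$_3$, namely $Q_0=[2\mu D(V_0)\cdot\n-d(\cdot,0)]\cdot\n$, and this is what \eqref{eq:compatibility-main1}$_4$ guarantees once $d(\cdot,0)=P(u_0)\cdot\n$ is substituted. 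The distinction matters for the structure of \eqref{eq:estimate-fluid}: because $d(\cdot,0)\ne 0$, you cannot invoke Lemma \ref{lem:tsigma-s} on all of $d$, and indeed the paper's estimate \eqref{eq:estimate_d} carries a $T^a$ factor only in front of $\norm{\tilde v}_{\mathcal V_2}+\norm{\tilde p}_{\Pi_2}$, while the initial-data norms $\norm{V_0}$, $\norm{u_0}$, etc.\ appear without a power of $T$. The small factor needed for the fixed point is recovered instead via Remark \ref{rm:fixedpoint0data}, by running the contraction estimate with zero initial data (so that there $d(\cdot,0)=0$), which your write-up does not distinguish from the estimate with general data.
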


This proposition immediately follows from Theorem \ref{th:nStokes_r}, provided that the hypotheses are satisfied. With similar arguments as in Remark \ref{rm:compatibility-conditions} (see also Remark \ref{rm:compatibility-conditions-hatchi}), the initial data satisfy the compatibility conditions required by Theorem \ref{th:nStokes_r}. Next lemma states that the forcing terms are in the ``correct'' functional spaces and satisfy estimates that, once replaced in \eqref{eq:estimate_n-S}, will lead to \eqref{eq:estimate-fluid}. 

\begin{lemma}\label{lem:estimatefgd}
Under the same hypotheses of Proposition \ref{prop:fluid}, the functions $g$, $f$, and $d$ defined in equations \eqref{eq:g}, \eqref{eq:f}, and \eqref{eq:d}, respectively (and with $\hat F=\nabla \hat \chi$, $\hat\chi\in B_R$ with $B_R$ defined in \eqref{eq:B_R}), satisfy the following regularity properties
\[
\begin{split}
&g\in L^2(0,T;H^2(\Omega_L))\cap H^1(0,T;H^1(\Omega_L))\cap H^2(0,T;H^{-1}(\Omega_L)),
\\
&f\in L^2(0,T;H^1(\Omega_L))\cap H^1(0,T;L^2(\Omega_L)), 
\\
&d\in L^2(0,T;H^{3/2}(\Gamma_L))\cap H^1(0,T;H^{1/2}(\Gamma_L))\cap H^{1+1/4}(0,T;L^2(\Gamma_L)).
\end{split}
\]
In addition,   
there exist a positive constant $C$ and $a\in (0,1)$ such that for all $T>0$ satisfying $T^{1/2}R\le 1$, the following estimates hold
\begin{multline}\label{eq:estimate_g}
\norm{g}_{L^2(0,T;H^2(\Omega_L))}+\norm{\frac{\partial g}{\partial t}}_{L^2(0,T;H^{1}(\Omega_L))}+\norm{\frac{\partial^2 g}{\partial t^2}}_{L^2(0,T;H^{-1}(\Omega_L))}
\\
\qquad\qquad\qquad\qquad\qquad \le CR\left(T^{a}\norm{\tilde v}_{\mathcal V_2}+\norm{V_0}_{H^{5/2}(\Omega_L)}+\norm{V_1}_{H^1(\Omega_L)}\right),
\end{multline}
\begin{multline}\label{eq:estimate_f}
\norm{f}_{L^2(0,T;H^1(\Omega_L))}+\norm{\frac{\partial f}{\partial t}}_{L^2(0,T;L^2(\Omega_L))}
\le CR\left[T^{a}(\norm{\tilde v}_{\mathcal V_2}+\norm{\tilde p}_{\Pi_2}) \right. 
\\
\left.+\norm{V_0}_{H^{5/2}(\Omega_L)}+\norm{V_1}_{H^1(\Omega_L)} +\norm{Q_0}_{H^{3/2}(\Omega_L)}\right],
\end{multline}
\begin{align}\label{eq:estimate_d}
\norm{d}_{L^2(0,T;H^{3/2}(\Gamma_L))}&+\norm{\frac{\partial d}{\partial t}}_{L^2(0,T;H^{1/2}(\Gamma_L))}
+ \norm{d}_{H^{1/4}(0,T;L^2(\Gamma_L))}
\\
&+\norm{\frac{\partial d}{\partial t}}_{H^{1/4}(0,T;L^2(\Gamma_L))}
\le C(1+R)\left[T^{a}(\norm{\tilde v}_{\mathcal V_2}+\norm{\tilde p}_{\Pi_2})\right. \notag
\\
&\left.\phantom{T^{1/4}} +\norm{V_0}_{H^{5/2}(\Omega_L)}+\norm{V_1}_{H^1(\Omega_L)} 
+\norm{u_0}_{H^{3}(\Omega_B)}+\norm{u_1}_{H^{3/2}(\Omega_B)}
\right].\notag
\end{align}
\end{lemma}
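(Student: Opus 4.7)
The plan is to reduce every estimate to a sum of products of the form $A \cdot \nabla^{\alpha}\tilde v$, $A\cdot \nabla \tilde p$, or $A\cdot \tilde p\cdot \n$, where $A$ is a ``small'' tensor built from $\cof{\hat F}-I$ (or $\cof{\hat F}^{T}\cof{\hat F}-I$, which has the same structure since $\cof{\hat F}|_{t=0}=I$), plus the $P(u)\cdot\n$ contribution in $d$. The required regularity then follows by combining the quantitative bounds on $\cof{\hat F}-I$ supplied by Lemma \ref{lem:estimate_hatchi}, the regularity of $(\tilde v,\tilde p)$ supplied by Lemma \ref{lem:r-tilde-vq}, and the bounds on $u$ and $P(u)\cdot\n$ from Proposition \ref{prop:elastic}, while the small factor $T^{a}$ is harvested by: (i) the explicit $T^{1/2},T^{1/6},T^{1/12}$ factors in Lemma \ref{lem:estimate_hatchi}; (ii) applying Lemma \ref{lem:tsigma-s} to differences such as $\tilde v-V_0$, $\partial_t\tilde v-V_1$, $\tilde p-Q_0$, and $\cof{\hat F}-I$ themselves, all of which vanish at $t=0$; and (iii) an $L^p$-in-time trade-off via Lemma \ref{lem:holder-st} whenever an essential-sup factor is available.

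For $g=(I-\cof{\hat F}):\nabla \tilde v$, I would exploit that $H^{2}(\Omega_L)$ is a multiplicative algebra: write $\tilde v=V_0+(\tilde v-V_0)$ and estimate
\[
\norm{g}_{L^{2}(0,T;H^{2})}\le C\norm{\cof{\hat F}-I}_{L^{\infty}(0,T;H^{2})}\bigl(T^{1/2}\norm{V_0}_{H^{3}(\Omega_L)} +\norm{\tilde v-V_0}_{L^{2}(0,T;H^{3})}\bigr),
\]
using \eqref{eq:iv1} for the first factor and Lemma \ref{lem:tsigma-s} for the second, then applying Lemma \ref{lem:estimate_hatchi}(i)-(ii) to extract a small $T$-power from either $\cof{\hat F}-I$ itself or the vanishing difference. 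The time derivatives $\partial_t g$ and $\partial_t^{2}g$ are expanded by Leibniz, and each resulting product is bounded by pairing $\cof{\hat F}-I$, $\partial_t\cof{\hat F}$, $\partial_t^{2}\cof{\hat F}$ (from \eqref{eq:iv6}) against $\tilde v$, $\partial_t \tilde v$, $\partial_t^{2}\tilde v$ via Sobolev embedding and Hölder; the $H^{-1}$-norm at order two follows by the standard duality argument. The interior estimate \eqref{eq:estimate_f} for $f$ follows by the same playbook, now handling also the products $(\cof{\hat F}^{T}\cof{\hat F}-I):D^{2}\tilde v$, $\nabla(\cof{\hat F}^{T}\cof{\hat F}):\nabla \tilde v$ (with \eqref{eq:iv3}, \eqref{eq:iv4.1}, \eqref{eq:iv5}) and $(I-\cof{\hat F})\cdot\nabla\tilde p$; the pressure contribution uses $\tilde p-Q_0\in {}_{0}H^{1}(0,T;H^{1})$ together with Lemma \ref{lem:tsigma-s}.

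The main obstacle is $d$, and specifically the fractional norm $H^{1/4}(0,T;L^{2}(\Gamma_L))$ of $d$ and $\partial_t d$. I would first reduce $L^{2}(0,T;H^{3/2}(\Gamma_L))$ and $H^{1}(0,T;H^{1/2}(\Gamma_L))$ to the already-proved bulk estimates via the trace theorem and the $H^{1}$-algebra property on the surface, the contribution of $P(u)\cdot\n$ being directly absorbed by the third estimate in \eqref{eq:estimate-elastic}. For the fractional norms the key tool is Lemma \ref{lem:ba7}: each offending term on $\Gamma_L$ is a product $A\cdot B$ with $A\in H^{1/4}(0,T;L^{4}(\Gamma_L))$ (either $\cof{\hat F}-I$ itself, controlled by \eqref{eq:iv4}, \eqref{eq:iv7}, or one of its time-derivatives) and $B$ a trace of $\nabla\tilde v$, $\tilde p$, or $\partial_t\tilde p$; the crucial point is that $A|_{t=0}=0$ on $\Omega_L$ so the initial-value term in Lemma \ref{lem:ba7} vanishes, leaving only the factor $T^{\sigma-s-1/2}$ with $\sigma=1$, $s=1/4$, i.e.\ $T^{1/4}$. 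The terms where $B$ itself is a derivative of $\tilde v$ require sharper interpolation, obtained by combining Lemma \ref{lem:interpolation}(I1) with the boundary regularity \eqref{eq:tildev3}-\eqref{eq:tildev6} of Lemma \ref{lem:r-tilde-vq}; this is where keeping track of the correct exponent $a\in(0,1)$ is delicate, but the $T^{1/8}$ and $T^{1/12}$ factors available in Lemma \ref{lem:estimate_hatchi}(iv) are the ones that survive and determine $a$.
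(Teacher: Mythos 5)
Your overall strategy — decompose $g,f,d$ into products of a ``small'' factor built from $\cof{\hat F}-I$ (or $\cof{\hat F}^T\cdot\cof{\hat F}-I$) with derivatives of $(\tilde v,\tilde p)$, treat the $P(u)\cdot\n$ piece via Proposition \ref{prop:elastic}, and use Lemma \ref{lem:ba7} with $\cof{\hat F}|_{t=0}=I$ so that the initial-trace term drops out in the $H^{1/4}(0,T;L^2(\Gamma_L))$ estimates — is exactly what the paper does, and your identification of the $T^{1/8}$, $T^{1/12}$ powers as the limiting exponent for $a$ matches the paper's computation. However, there are two concrete slips in your write-up that would have to be repaired.

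First, in your sample $L^2(0,T;H^2)$ estimate for $g$, the splitting $\tilde v = V_0 + (\tilde v - V_0)$ produces the term $T^{1/2}\norm{V_0}_{H^3(\Omega_L)}$, but $V_0$ is only assumed to lie in $H^{5/2}(\Omega_L)$, so this quantity is not finite. The splitting is in fact unnecessary here: there is no fractional time norm in this estimate, so you should bound $\norm{\tilde v}_{L^2(0,T;H^3)}\le\norm{\tilde v}_{\mathcal V_2}$ directly and extract the small power $T^{1/2}R$ from $\norm{\cof{\hat F}-I}_{C([0,T];H^2)}$ (for which you must use \eqref{eq:iv2} together with \eqref{eq:iv4.1}, not \eqref{eq:iv1} as you cite — \eqref{eq:iv1} only gives boundedness, not smallness). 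The paper avoids the algebra property altogether and handles each derivative order separately via H\"older, pairing $\norm{I-\cof{\hat F}}_{C([0,T]\times\overline{\Omega_L})}$, $\norm{\nabla\cof{\hat F}}_{C([0,T];L^3)}$, $\norm{D^2\cof{\hat F}}_{C([0,T];L^2)}$ against $\norm{\tilde v}_{L^2(0,T;H^3)}$. Your algebra-based version can be made to work, but only with the corrected citations.

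Second, your handling of $\partial_t^2 g$ in $H^{-1}(\Omega_L)$ is under-specified. After Leibniz, the term $(I-\cof{\hat F}):\nabla\partial_t^2\tilde v$ cannot be bounded by placing the product in $L^{6/5}$, because $\partial_t^2\tilde v$ is only in $L^2(0,T;L^2(\Omega_L))$ and one cannot take another spatial derivative. The paper's crucial observation is that the Piola condition $\partial\cof{\hat F}_{\alpha\beta}/\partial X_\beta = 0$ lets one rewrite
\[
(I-\cof{\hat F}):\nabla\partial_t^2\tilde v
= \frac{\partial}{\partial X_\beta}\!\left[(\delta_{\alpha\beta}-\cof{\hat F}_{\alpha\beta})\,\partial_t^2\tilde v_\alpha\right],
\]
which is the divergence of an $L^2$ field and hence lies in $H^{-1}$ with the right smallness from $\norm{I-\cof{\hat F}}_{C([0,T]\times\overline{\Omega_L})}$. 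Calling this a ``standard duality argument'' without invoking the Piola identity leaves a genuine gap: without it, the estimate for this term fails. The remaining terms in $\partial_t^2 g$ (those carrying $\partial_t\cof{\hat F}$ or $\partial_t^2\cof{\hat F}$) are indeed handled by the embedding $L^{6/5}\hookrightarrow H^{-1}$ as you suggest.

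Everything else — the treatment of $f$ via the tensor $S$, the trace-theorem reduction for the $L^2$-in-time boundary norms of $d$, and the use of Lemma \ref{lem:ba7} with $\sigma=1$, $s=1/4$ exploiting the vanishing of $\cof{\hat F}-I$ at $t=0$ — is aligned with the paper's proof.
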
 

\begin{proof}
We will prove the regularity properties of $g$, $f$ and $d$ by directly proving \eqref{eq:estimate_g}, \eqref{eq:estimate_f}, and \eqref{eq:estimate_d}. Let us start with \eqref{eq:estimate_g}. We notice that 
\[\begin{split}
\norm{g}_{L^2(0,T;H^2(\Omega_L))}^2&=\int^T_0\norm{(I-\cof{\hat F}):\nabla \tilde v}^2_{L^2(\Omega_L)}
\\
&\qquad +\int^T_0\norm{\nabla[(I-\cof{\hat F}):\nabla \tilde v]}^2_{L^2(\Omega_L)}
\\
&\qquad +\int^T_0\norm{D^2[(I-\cof{\hat F}):\nabla \tilde v]}^2_{L^2(\Omega_L)}.
\end{split}\]
Each of the integrals on the right-hand side of the displayed equation can be estimated as follows with the aid of H\"older's  inequality and Sobolev embeddings
\begin{equation}\label{eq:eg1}
\int^T_0\norm{(I-\cof{\hat F}):\nabla \tilde v}^2_{L^2(\Omega_L)}\le \norm{I-\cof{\hat F}}^2_{C([0,T]\times \overline{\Omega_L})}\norm{\nabla\tilde v}^2_{L^2(0,T;L^2(\Omega_L))},
\end{equation}
\begin{equation}\label{eq:eg2}\begin{split}
&\int^T_0\norm{\nabla[(I-\cof{\hat F}):\nabla \tilde v]}^2_{L^2(\Omega_L)}
\\
&\ 
\le c_1\left[\norm{\nabla \cof{\hat F}}^2_{C([0,T];L^3(\Omega_L))}+\norm{I-\cof{\hat F}}^2_{C([0,T]\times\overline{\Omega_L})}\right]\norm{\tilde v}^2_{L^2(0,T;H^2(\Omega_L))},
\end{split}\end{equation}
\begin{equation}\label{eq:g3}\begin{split}
&\int^T_0\norm{D^2[(I-\cof{\hat F}):\nabla \tilde v]}^2_{L^2(\Omega_L)}
\le c_2\left[
\norm{D^2\cof{\hat F}}^2_{C([0,T];L^2(\Omega_L))}\right.
\\
&\quad \left.
+\norm{I-\cof{\hat F}}^2_{C([0,T]\times\overline{\Omega_L})}
+\norm{\nabla \cof{\hat F}}^2_{C([0,T];L^3(\Omega_L))}
\right]\norm{\tilde v}^2_{L^2(0,T;H^3(\Omega_L))}. 
\end{split}\end{equation}
Let us look at the second term in the left-hand side of \eqref{eq:estimate_g}: 
\[\begin{split}
\norm{\frac{\partial g}{\partial t}}_{L^2(0,T;H^1(\Omega_L))}^2
&\le2\left[\int^T_0\norm{\frac{\partial }{\partial t}(I-\cof{\hat F}):\nabla \tilde v}^2_{L^2(\Omega_L)}\right.
\\
&\qquad +\int^T_0\norm{(I-\cof{\hat F}):\nabla \left(\frac{\partial \tilde v}{\partial t}\right)}^2_{L^2(\Omega_L)}
\\
&\qquad+\int^T_0\norm{\nabla[\frac{\partial}{\partial t}(I-\cof{\hat F}):\nabla \tilde v]}^2_{L^2(\Omega_L)}
\\
&\qquad\left.+\int^T_0\norm{\nabla[(I-\cof{\hat F}):\nabla \left(\frac{\partial \tilde v}{\partial t}\right)]}^2_{L^2(\Omega_L)}\right].
\end{split}\]
For the second and fourth term on the right-hand side, thanks to the regularity properties of $\tilde v$, we can proceed exactly like in the estimates \eqref{eq:eg1} and \eqref{eq:eg2}, thus obtaining
\begin{multline}\label{eq:g4}
\int^T_0\norm{(I-\cof{\hat F}):\nabla \left(\frac{\partial \tilde v}{\partial t}\right)}^2_{L^2(\Omega_L)}
%\\
\le \norm{I-\cof{\hat F}}^2_{C([0,T]\times \overline{\Omega_L})}\norm{\frac{\partial \tilde v}{\partial t}}^2_{L^2(0,T;H^1(\Omega_L))},
\end{multline}
and 
\begin{equation}\label{eq:g5}\begin{split}
&\int^T_0\norm{\nabla[(I-\cof{\hat F}):\nabla \left(\frac{\partial \tilde v}{\partial t}\right)]}^2_{L^2(\Omega_L)}
\le c_3\left[\norm{\nabla \cof{\hat F}}^2_{C([0,T];L^3(\Omega_L))} \right.
\\
&\qquad\qquad\qquad\qquad\qquad\qquad\quad \left.+\norm{I-\cof{\hat F}}^2_{C([0,T]\times\overline{\Omega_L})}\right]\norm{\frac{\partial\tilde v}{\partial t}}^2_{L^2(0,T;H^2(\Omega_L))}.
\end{split}\end{equation}
We then estimate the remaining integrals by repeatedly using H\"older's  inequality and Sobolev embedding, obtaining the following estimates:
\begin{equation}\label{eq:g6}\begin{split}
&\int^T_0\norm{\frac{\partial }{\partial t}(I-\cof{\hat F}):\nabla \tilde v}^2_{L^2(\Omega_L)}\le 
c_4\int^T_0\norm{\frac{\partial }{\partial t}\cof{\hat F}}^2_{L^3(\Omega_L)}\norm{\tilde v}^2_{H^2(\Omega_L)}
\\
&\qquad\qquad\qquad\qquad\qquad\le c_5\norm{\frac{\partial }{\partial t}\cof{\hat F}}^2_{L^\infty(0,T;H^1(\Omega_L))}\norm{\tilde v}^2_{C([0,T];H^2(\Omega_L))}T;
\end{split}\end{equation}
\begin{equation}\label{eq:g7}\begin{split}
&\int^T_0\norm{\nabla[\frac{\partial}{\partial t}(I-\cof{\hat F}):\nabla \tilde v]}^2_{L^2(\Omega_L)}
\le c_6 \left[\norm{\frac{\partial}{\partial t}\cof{\hat F}}^2_{L^\infty(0,T;H^1(\Omega_L))}T \right.
\\
&\qquad\qquad\qquad\qquad\qquad\qquad \left.+\norm{\frac{\partial }{\partial t}\nabla \cof{\hat F}}^2_{L^2(0,T;L^3(\Omega_L))}\right]\norm{\tilde v}^2_{C([0,T];H^2(\Omega_L))}. 
\end{split}\end{equation}
Let us now estimate 
\[\begin{split}
\norm{\frac{\partial^2 g}{\partial t^2}}_{L^2(0,T;H^{-1}(\Omega_L))}^2
&=\int^T_0\norm{\frac{\partial^2}{\partial t^2}\left[(I-\cof{\hat F}):\nabla \tilde v\right]}^2_{H^{-1}(\Omega_L)}
\\
&\le 2\int^T_0\norm{\frac{\partial^2 }{\partial t^2}(I-\cof{\hat F}):\nabla \tilde v}^2_{H^{-1}(\Omega_L)}
\\
&\quad +4\int^T_0\norm{\frac{\partial}{\partial t}(I-\cof{\hat F}):\nabla \left(\frac{\partial \tilde v}{\partial t}\right)}^2_{H^{-1}(\Omega_L)}
\\
&\quad +2\int^T_0\norm{(I-\cof{\hat F}):\nabla \left(\frac{\partial^2 \tilde v}{\partial t^2}\right)}^2_{H^{-1}(\Omega_L)}. 
\end{split}\]
For the first integral on the right-hand side, we notice that $L^{6/5}(\Omega_L)\hookrightarrow H^{-1}(\Omega_L)$, then H\"older's inequality and Sobolev embedding theorem imply the following estimate 
\begin{multline}\label{eq:g8}
\int^T_0\norm{\frac{\partial^2}{\partial t^2}(I-\cof{\hat F}):\nabla \tilde v}^2_{H^{-1}(\Omega_L)}
\\
\le c_7\norm{\frac{\partial^2 }{\partial t^2}\cof{\hat F}}^2_{L^2(0,T;L^2(\Omega_L))}\norm{\tilde v}^2_{C([0,T];H^2(\Omega_L))}.
\end{multline}
For the second integral, using a similar strategy, we find 
\begin{multline}\label{eq:g9}
\int^T_0\norm{\frac{\partial}{\partial t}(I-\cof{\hat F}):\nabla \left(\frac{\partial \tilde v}{\partial t}\right)}^2_{H^{-1}(\Omega_L)}
\\
\le c_8\norm{\frac{\partial}{\partial t}\cof{\hat F}}^2_{L^\infty(0,T;H^1(\Omega_L))}\norm{\nabla \left(\frac{\partial \tilde v}{\partial t}\right)}^2_{C([0,T];L^2(\Omega_L))} T. 
\end{multline} 
For the third integral, we first notice that 
\[
(I-\cof{\hat F}):\nabla \left(\frac{\partial^2 \tilde v}{\partial t^2}\right)=\frac{\partial}{\partial X_\beta}\left[(\delta_{\alpha \beta}-\cof{\hat F}_{\alpha\beta})\frac{\partial^2 \tilde v_\alpha}{\partial t^2}\right]-\frac{\partial \cof{\hat F}_{\alpha\beta}}{\partial X_\beta}\frac{\partial^2 \tilde v_\alpha}{\partial t^2},
\]
and since (by \eqref{eq:piola-condition}, c.f. \eqref{eq:B_R})
\[
\frac{\partial \cof{\hat F}_{\alpha\beta}}{\partial X_\beta}=0\qquad\text{ for all }\alpha=1,2,3,
\]
then 
\[
\norm{(I-\cof{\hat F}):\nabla \left(\frac{\partial^2 \tilde v}{\partial t^2}\right)}_{H^{-1}(\Omega_L)}
\le c_9\norm{(I-\cof{\hat F})^T\cdot\frac{\partial^2 \tilde v}{\partial t^2}}_{L^{2}(\Omega_L)}. 
\]
Thus, we have the estimate 
\begin{multline}\label{eq:g10}
\int^T_0\norm{(I-\cof{\hat F}):\nabla \left(\frac{\partial^2 \tilde v}{\partial t^2}\right)}^2_{H^{-1}(\Omega_L)}
\\
\le c_{10}\norm{I-\cof{\hat F}}_{C([0,T]\times\overline{\Omega_L})}^2
\norm{\frac{\partial^2 \tilde v}{\partial t^2}}_{L^2(0,T;L^2(\Omega_L))}^2. 
\end{multline}

The proof of \eqref{eq:estimate_g} then follows by using Lemma \ref{lem:estimate_hatchi} and Lemma \ref{lem:r-tilde-vq} in the right-hand sides of \eqref{eq:eg1}--\eqref{eq:g10}. 

Let us now  introduce the tensor-valued function
\begin{equation}\label{eq:def-S}\begin{split}
S(\tilde v,\hat \chi)&:=2D(\tilde v)-\nabla \tilde v\cdot\cof{\hat F}^T\cdot\cof{\hat F}-\cof{\hat F}\cdot(\nabla\tilde v)^T\cdot\cof{\hat F}
\\
&=\nabla{\tilde v}-\nabla{\tilde v}\cdot\cof{\hat F}^T\cdot\cof{\hat F}+(\nabla{\tilde v})^T-\cof{\hat F}\cdot(\nabla\tilde v)^T\cdot\cof{\hat F}
\\
&=\nabla{\tilde v}\cdot(I-\cof{\hat F}^T)+\nabla{\tilde v}\cdot\cof{\hat F}^T\cdot(I-\cof{\hat F})
\\
&\qquad+(I-\cof{\hat F})\cdot (\nabla{\tilde v})^T+\cof{\hat F}\cdot(\nabla\tilde v)^T\cdot(I-\cof{\hat F})
\\
&=\nabla{\tilde v}\cdot(I-\cof{\hat F}^T)+[(\nabla{\tilde v})\cdot (I-\cof{\hat F}^T)]^T
\\
&\qquad+\nabla{\tilde v}\cdot\cof{\hat F}^T\cdot(I-\cof{\hat F})+[(\nabla\tilde v)\cdot\cof{\hat F}^T]^T\cdot(I-\cof{\hat F}).
\end{split}\end{equation}
From \eqref{eq:f}, we can then write  $f= (I-\cof{\hat F})\cdot\nabla \tilde p-\mu \div S$. For the proof of \eqref{eq:estimate_f}, we first notice that, up to an absolute constant $c$ (depending at most on $\mu$),
\[\begin{split}
&|f|\le c\left(|I-\cof{\hat F}||\nabla \tilde p|+|D^2_X\tilde v| |I-\cof{\hat F}|
+|\nabla\tilde v||\nabla\cof{\hat F}| \right.
\\
&\qquad\qquad\qquad+ |D^2_X\tilde v| |\cof{\hat F}| |I-\cof{\hat F}| + |\nabla\tilde v||\nabla\cof{\hat F}| |I-\cof{\hat F}|
\\
&\qquad\qquad\qquad\left.+|\nabla\tilde v| |\cof{\hat F}| |\nabla\cof{\hat F}|\right),
\\
&|\frac{\partial f}{\partial X_\iota}|\le  c\left(|\nabla\cof{\hat F}| |\nabla \tilde p| +  |I-\cof{\hat F}| |D^2_X \tilde p| \right.
\\
&\qquad\qquad\qquad +|D^3_X\tilde v| |I-\cof{\hat F}| + |D^2_X\tilde v||\nabla\cof{\hat F}| + |\nabla\tilde v| |D^2_X\cof{\hat F}|
\\
&\qquad\qquad\qquad +|D^3_X\tilde v||\cof{\hat F}| |I-\cof{\hat F}| 
+|D^2_X\tilde v||\nabla\cof{\hat F}| |I-\cof{\hat F}|
\\
&\qquad\qquad\qquad  +|D^2_X\tilde v||\cof{\hat F}||\nabla\cof{\hat F}|+|\nabla\tilde v| |D^2_X\cof{\hat F}| |I-\cof{\hat F}| 
\\
&\qquad\qquad\qquad  \left. 
+|\nabla\tilde v||\nabla\cof{\hat F}|^2
+|\nabla\tilde v||\cof{\hat F}| |D^2_X\cof{\hat F}|\right), 
\\
&|\frac{\partial f}{\partial t}|\le c\left(|\frac{\partial \cof{\hat F}}{\partial t}||\nabla \tilde p|+|I-\cof{\hat F}||\frac{\partial }{\partial t}\nabla\tilde p| 
+|\frac{\partial }{\partial t}D^2_X\tilde v| |I-\cof{\hat F}| \right. 
\\
&\qquad\qquad \quad  + |D^2_X\tilde v||\frac{\partial}{\partial t}\cof{\hat F}| 
+|\frac{\partial }{\partial t}\nabla\tilde v||\nabla\cof{\hat F}| 
+|\nabla\tilde v||\frac{\partial}{\partial t}\nabla\cof{\hat F}|
\\
&\qquad\qquad\quad 
+|\frac{\partial}{\partial t}D^2_X\tilde v||\cof{\hat F}| |I-\cof{\hat F}|
+|D^2_X\tilde v||\frac{\partial}{\partial t}\cof{\hat F}| |I-\cof{\hat F}|
\\
&\qquad\qquad\quad 
+|D^2_X\tilde v||\cof{\hat F}||\frac{\partial }{\partial t}\cof{\hat F}|
+|\frac{\partial}{\partial t}\nabla\tilde v|\nabla\cof{\hat F}| |I-\cof{\hat F}|
\\
&\qquad\qquad\quad 
+|\nabla\tilde v||\frac{\partial}{\partial t}\nabla\cof{\hat F}| |I-\cof{\hat F}|+|\nabla\tilde v||\nabla\cof{\hat F}| |\frac{\partial}{\partial t}\cof{\hat F}|
\\
&\qquad\qquad\quad \left. 
+|\frac{\partial}{\partial t}\nabla\tilde v||\cof{\hat F}||\nabla\cof{\hat F}| + |\nabla\tilde v||\cof{\hat F}||\frac{\partial}{\partial t}\nabla\cof{\hat F}|\right). 
\end{split}\]
We will only estimate the terms involving the pressure field $\tilde p$ on the right-hand side of the above inequalities; all the remaining terms can be estimated in a similar fashion as in the previous and in the following calculations, using Lemma \ref{lem:estimate_hatchi} and Lemma \ref{lem:r-tilde-vq}. By \eqref{eq:iv2}
\[\left.\begin{split}
&\int^T_0\int_{\Omega_L} | I-\cof{\hat F}|^2|\nabla \tilde p|^2
\\
&\int^T_0\int_{\Omega_L}|I-\cof{\hat F}|^2 |D^2 \tilde p|^2
\\
&\int^T_0\int_{\Omega_L} |I-\cof{\hat F}|^2|\frac{\partial }{\partial t}\nabla\tilde p|^2
\end{split}\right\}
\le CTR^2\norm{\tilde p}_{\Pi_2}^2. 
\]
By H\"older's inequality, Sobolev embedding, and by \eqref{eq:iv3}, we obtain
\[\begin{split}
\int^T_0\int_{\Omega_L}|\nabla\cof{\hat F}|^2 |\nabla \tilde p|^2&\le\int^T_0 \norm{\nabla\cof{\hat F}}_{L^3(\Omega_L)}^{2}\norm{\nabla \tilde p}_{L^6(\Omega_L)}^{2}
\\
&\le c_{11}R^{2}T^{4/3}\norm{\tilde p}_{\Pi_2}^{2}. 
\end{split}\]
Similarly, using \eqref{eq:iv6} and \eqref{eq:tildep1}, we find that 
\[\begin{split}
\int^T_0\int_{\Omega_L}|\frac{\partial \cof{\hat F}}{\partial t}|^2 |\nabla \tilde p|^2&\le \int^T_0\norm{\frac{\partial \cof{\hat F}}{\partial t}}_{L^4(\Omega_L)}^{2} \norm{\nabla \tilde p}_{L^4(\Omega_L)}^2
\\
&\le c_{12}R^2T\left(\norm{\tilde p}_{\Pi_2}^2+\norm{Q_0}_{H^{3/2}(\Omega_L)}\right). 
\end{split}\]
We now prove \eqref{eq:estimate_d}. From the definition of $d$ in \eqref{eq:d} and from \eqref{eq:def-S}, 
we write $d=[-\tilde p(I-\cof{\hat F})+\mu S(\tilde v,\hat \chi)+P(u)]\cdot\n$, and \eqref{eq:estimate_d} would follow if we could prove that $S$ satisfies the following estimate
\begin{equation}\label{eq:estimate-S}
\begin{split}
\norm{S}_{L^2(0,T;H^{3/2}(\Gamma_L))}&+\norm{\frac{\partial S}{\partial t}}_{L^2(0,T;H^{1/2}(\Gamma_L))}
+ \norm{S}_{H^{1/4}(0,T;L^2(\Gamma_L))}
\\
&+\norm{\frac{\partial S}{\partial t}}_{H^{1/4}(0,T;L^2(\Gamma_L))}\le CR\left[T^{\alpha}(\norm{\tilde v}_{\mathcal V_2}+\norm{\tilde p}_{\Pi_2})\right.
\\
&\qquad\qquad\qquad\qquad\qquad\qquad\left.+\norm{V_0}_{H^{5/2}(\Omega_L)}+\norm{V_1}_{H^1(\Omega_L)} \right].
\end{split}\end{equation}
for some $\alpha \in (0,1)$ and a positive constant $C$ independent of $T$. In fact, given that $u$ satisfies the estimates \eqref{eq:estimate-elastic} (recall \eqref{eq:normU}),  
 $(\tilde v,\tilde p)\in \mathcal E$ (recall \eqref{eq:B_M}), and \eqref{eq:iv2} holds, then the terms involving $P(u)\cdot\n$ and $\tilde p$ in 
 \eqref{eq:d} can be easily estimated as in \eqref{eq:estimate_d}.  
We estimate only $\nabla{\tilde v}\cdot\cof{\hat F}^T\cdot(I-\cof{\hat F})$ as a sample term in $S$, all the others can be estimated similarly thanks to Lemma \ref{lem:estimate_hatchi} and Lemma \ref{lem:r-tilde-vq}. We start by showing that 
\begin{equation}\label{eq:sample-S-1}
\norm{\nabla{\tilde v}\cdot\cof{\hat F}^T\cdot(I-\cof{\hat F})}_{L^2(0,T;H^2(\Omega_L)}\le CT^{1/2}R \norm{\tilde v}_{\mathcal V_2}.
\end{equation}
The estimate for the corresponding term in \eqref{eq:estimate-S} (the first one, in fact) will follow from the trace theorem. By H\"older's inequality, Sobolev embedding theorem,  \eqref{eq:iv2}, and \eqref{eq:iv1}, we immediately see that 
\begin{equation}\label{eq:sample-S-1.1}
\begin{split}
\norm{\nabla{\tilde v}\cdot\cof{\hat F}^T\cdot(I-\cof{\hat F})}_{L^2(0,T;L^2(\Omega_L))}^2
%\\
%&\qquad \qquad\qquad\qquad\qquad \qquad\qquad\qquad
&\le k_1TR^2\int^T_0\norm{\nabla \tilde v}^2_{L^6(\Omega_L)}\norm{\cof{\hat F}}^2_{L^3(\Omega_L)}
\\
&%\qquad\qquad\qquad\qquad \qquad \qquad\qquad\qquad
\le k_2TR^2\norm{\tilde v}^2_{\mathcal V_2}. 
\end{split}\end{equation}
Subsequently, we notice that  
\[\begin{split}
&\norm{\nabla{\tilde v}\cdot\cof{\hat F}^T\cdot(I-\cof{\hat F})}_{L^2(0,T;H^1(\Omega_L))}^2
\\
&\quad 
\le\norm{\nabla{\tilde v}\cdot\cof{\hat F}^T\cdot(I-\cof{\hat F})}_{L^2(0,T;L^2(\Omega_L))}^2
\\
&\quad \quad +k_3\int^T_0\int_{\Omega_L}\left[|D^2_X\tilde v|^2|\cof{\hat F}|^2|I-\cof{\hat F}|^2 
\right.
\\
&\qquad \qquad \quad \left. +|\nabla \tilde v|^2|\nabla \cof{\hat F}|^2|I-\cof{\hat F}|^2
+|\nabla \tilde v|^2|\cof{\hat F}|^2|\nabla \cof{\hat F}|^2\right].
\end{split}\]
The first two terms inside the latter displayed integral can be estimated exactly as in \eqref{eq:sample-S-1.1}. For the last term, we use \eqref{eq:iv1} for the uniform estimate of $|\cof{\hat F}|^2$, and then 
\begin{equation}\label{eq:sample-S-1.2}
\begin{split}
\int^T_0\int_{\Omega_L}|\nabla \tilde v|^2|\cof{\hat F}|^2|\nabla \cof{\hat F}|^2&\le k_4\int^T_0\norm{\nabla \tilde v}_{L^6(\Omega_L)}^2\norm{\nabla \cof{\hat F}}^2_{L^3(\Omega_L)}
\\
&\le k_5 T^{4/3}R^2\norm{\tilde v}^2_{\mathcal V_2},
\end{split}\end{equation}
by H\"older's inequality, Sobolev embedding, and \eqref{eq:iv3}. The second-order derivatives (with respect to the space variable) can be estimated similarly. For what concerns the time-derivative, 
we will show that 
\[
\norm{\frac{\partial}{\partial t}\left(\nabla{\tilde v}\cdot\cof{\hat F}^T\cdot(I-\cof{\hat F})\right)}_{L^2(0,T;H^1(\Omega_L))}\le CT^{\alpha}R \norm{\tilde v}_{\mathcal V_2}
\]
for some $\alpha \in (0,1)$ and a positive constant $C$ independent of $T$. The estimate for the corresponding term in \eqref{eq:estimate-S} (the second one on the left-hand side) will follow from the trace theorem. We first notice that 
\begin{equation*}\begin{split}
&\norm{\frac{\partial}{\partial t}\left(\nabla{\tilde v}\cdot\cof{\hat F}^T\cdot(I-\cof{\hat F})\right)}^2_{L^2(0,T;L^2(\Omega_L))}
\\
&\qquad\qquad \le k_6\left(\int^T_0\int_{\Omega_L}|\nabla\frac{\partial \tilde v}{\partial t}|^2 |\cof{\hat F}|^2 |I-\cof{\hat F}|^2\right. 
\\
&\qquad\qquad\qquad\qquad\qquad+\int^T_0\int_{\Omega_L}|\nabla \tilde v|^2 |\frac{\partial}{\partial t}\cof{\hat F}|^2 |I-\cof{\hat F}|^2
\\
&\qquad\qquad\qquad\qquad\qquad  \left.+\int^T_0\int_{\Omega_L}|\nabla\tilde v|^2 |\cof{\hat F}|^2 |\frac{\partial }{\partial t}\cof{\hat F}|^2\right). 
\end{split}\end{equation*}
The first two integral terms on the right-hand side can be estimated exactly as in \eqref{eq:sample-S-1.1} thanks to \eqref{eq:iv2}. For the last one, we use \eqref{eq:iv1} for the uniform estimate of $\cof{\hat F}$, and then H\"older's inequality with Sobolev embedding, \eqref{eq:iv6} and \eqref{eq:tildev0}, to find the following estimate 
\begin{equation}\label{eq:sample-S-2.1}\begin{split}
\int^T_0\int_{\Omega_L}|\nabla\tilde v|^2 |\cof{\hat F}|^2 |\frac{\partial}{\partial t}\cof{\hat F}|^2&\le k_7\int^T_0\norm{\nabla \tilde v}^2_{L^6(\Omega_L)}\norm{\frac{\partial}{\partial t}\cof{\hat F}}^2_{L^3(\Omega_L)}
\\
&
\le k_8R^2\left(T\norm{\tilde v}_{\mathcal V_2}^2+\norm{V_0}_{H^2(\Omega_L)}^2\right). 
\end{split}\end{equation}
Now, we notice that 
\[
|D_X\frac{\partial}{\partial t}\left(\nabla{\tilde v}\cdot\cof{\hat F}^T\cdot(I-\cof{\hat F})\right)|\le 
c\sum^8_{i=1}J_i,
\]
where
\[\begin{split}
&\sum^8_{i=1}J_i:=
|D^2_X\frac{\partial \tilde v}{\partial t}| |\cof{\hat F}| |I-\cof{\hat F}|+|\nabla\frac{\partial \tilde v}{\partial t}| |\nabla \cof{\hat F}| |I-\cof{\hat F}|
\\
&\qquad \quad +|\nabla\frac{\partial \tilde v}{\partial t}| |\cof{\hat F}| |\nabla\cof{\hat F}|
+|D^2_X \tilde v| |\frac{\partial }{\partial t}\cof{\hat F}| |I-\cof{\hat F}| 
\\
&\qquad\quad + |\nabla \tilde v| |\nabla \frac{\partial }{\partial t}\cof{\hat F}| |I-\cof{\hat F}| 
+ |\nabla \tilde v| |\frac{\partial }{\partial t}\cof{\hat F}| |\nabla \cof{\hat F}| 
\\
&\qquad \quad
+ |D^2_X\tilde v| |\cof{\hat F}| |\frac{\partial }{\partial t}\cof{\hat F}|  
+ |\nabla\tilde v| |\cof{\hat F}| |\nabla \frac{\partial }{\partial t}\cof{\hat F}|.
\end{split}\]
The terms $J_1$ and $J_2$ can be estimated as in \eqref{eq:sample-S-1.1} thanks to Lemma \ref{lem:estimate_hatchi}. $J_3$ can be estimated as \eqref{eq:sample-S-1.2}, 
whereas $J_8$ like \eqref{eq:sample-S-2.1}. 
An estimate for $J_4$ follows along the lines of \eqref{eq:sample-S-1.1} with \eqref{eq:iv1} replaced by \eqref{eq:iv6}. For $J_5$, we use \eqref{eq:iv2}, H\"older's inequality, Sobolev embedding, \eqref{eq:tildev0}, \eqref{eq:iv5} and the fact that $T<1$ and $T^{1/2}R\le1$, to obtain 
\[\begin{split}
\int^T_0\int_{\Omega_L}|\nabla \tilde v|^2 & |\nabla\frac{\partial }{\partial t}\cof{\hat F}|^2|I-\cof{\hat F}|^2
\\
\le& k_9TR^2\int^T_0\norm{\nabla \tilde v}_{L^6(\Omega_L)}^2\norm{\nabla\frac{\partial }{\partial t}\cof{\hat F}}^2_{L^3(\Omega_L)} 
\\
\le& k_{10}T^{2/3}R^2\left(\norm{\tilde v}_{\mathcal V_2}^2+\norm{V_0}_{H^{5/2}(\Omega_L)}^2\right).
\end{split}\]
Let us now estimate $J_6$. We use H\"older's inequality with exponents $p=q=r=3$, Sobolev embedding theorem, \eqref{eq:tildev0}, \eqref{eq:iv6} and \eqref{eq:iv1}, to obtain the following estimate:
\[\begin{split}
\int^T_0\int_{\Omega_L} |\nabla \tilde v|^2 & |\frac{\partial }{\partial t}\cof{\hat F}|^2  |\nabla \cof{\hat F}|^2
\\
&\le 
k_{11}\int^T_0\norm{\nabla\tilde v}^2_{L^6(\Omega_L)}\norm{\frac{\partial }{\partial t}\cof{\hat F}}^2_{L^6(\Omega_L)}\norm{\nabla \cof{\hat F}}^2_{L^6(\Omega_L)}
\\
& \le k_{12}TR^2\left(\norm{\tilde v}_{\mathcal V_2}^2+\norm{V_0}^2_{H^{5/2}(\Omega_L)}\right).
\end{split}\]
We will now estimate $J_7$. Let us apply Lemma \ref{lem:ba7} with $f=|\partial \cof{\hat F}/\partial t|$, $g=|D^2_X\tilde v|$, $\sigma=3/4$, $s=0$, $p=12$ and $q=12/5$ to obtain that 
\[\begin{split}
&\int^T_0\int_{\Omega_L}|D_X^2\tilde v|^2|\frac{\partial}{\partial t}\cof{\hat F}|^2
\\
&\ \le k_{13}\norm{\frac{\partial}{\partial t}\cof{\hat F}}^2_{L^2(0,T;L^{12}(\Omega_L))}\left[T^{1/4}\norm{D^2_X\tilde v}^2_{H^{3/4}(0,T;L^{12/5}(\Omega_L))}+\norm{V_0}_{H^{5/2}(\Omega_L)}^2\right].
\end{split}\]
From \eqref{eq:iv1}, the latter displayed inequality, Sobolev embedding theorem, \eqref{eq:iv6}, and Remark \ref{rm:trace-regularity-v}, we find the following estimate for the term $J_7$: 
\[
\int^T_0\int_{\Omega_L}|D^2_X\tilde v|^2 |\cof{\hat F}|^2 |\frac{\partial }{\partial t}\cof{\hat F}|^2
\le k_{14}T R^2\left[T^{1/4}\norm{\tilde v}_{\mathcal V_2}^2+ +\norm{V_0}_{H^{5/2}(\Omega_L)}^2\right].
\]
Putting together the above estimates, we showed that 
\begin{multline*}
\norm{\frac{\partial}{\partial t}\left(\nabla{\tilde v}\cdot\cof{\hat F}^T\cdot(I-\cof{\hat F})\right)}_{L^2(0,T;H^1(\Omega_L))}
\\
\le k_{15}R\left(T^{1/3}\norm{\tilde v}_{\mathcal V_2} + \norm{V_0}_{H^{5/2}(\Omega_L)}\right). 
\end{multline*}
To conclude the proof of \eqref{eq:estimate-S} (and thus of \eqref{eq:estimate_d}), it remains to show that the estimates
\begin{multline*}
\norm{\nabla{\tilde v}\cdot\cof{\hat F}^T\cdot(I-\cof{\hat F})}_{H^{1/4}(0,T;L^2(\Gamma_L))}
\\
\le C R\left[T^\alpha\norm{\tilde v}_{\mathcal V_2}+\norm{V_0}_{H^{5/2}(\Omega_L)}+\norm{V_1}_{H^1(\Omega_L)} \right]
\end{multline*}
and
\begin{multline*}
\norm{\frac{\partial }{\partial t}\left(\nabla{\tilde v}\cdot\cof{\hat F}^T\cdot(I-\cof{\hat F})\right)}_{H^{1/4}(0,T;L^2(\Gamma_L))}
\\
\le CR\left[T^\alpha \norm{\tilde v}_{\mathcal V_2}+\norm{V_0}_{H^{5/2}(\Omega_L)}+\norm{V_1}_{H^1(\Omega_L)} \right]
\end{multline*}
hold for some $\alpha \in (0,1)$ and a positive constant $C$ independent of $T$. We will prove only the estimate involving the time derivative because the first one follows using identical steps. 

By triangle inequality, 
\[\begin{split}
&\norm{\frac{\partial }{\partial t}\left(\nabla{\tilde v}\cdot\cof{\hat F}\cdot(I-\cof{\hat F})\right)}_{H^{1/4}(0,T;L^2(\Gamma_L))}
\\
&\qquad\le\norm{\frac{\partial }{\partial t}\left(\nabla{\tilde v}\right)\cdot \cof{\hat F}\cdot(I-\cof{\hat F})}_{H^{1/4}(0,T;L^2(\Gamma_L))}
\\
&\qquad\qquad +\norm{\nabla\tilde v\cdot\frac{\partial}{\partial t}\cof{\hat F}\cdot(I-\cof{\hat F})}_{H^{1/4}(0,T;L^2(\Gamma_L))}
\\
&\qquad\qquad +\norm{\nabla{\tilde v}\cdot\cof{\hat F}\cdot\frac{\partial }{\partial t}\cof{\hat F}}_{H^{1/4}(0,T;L^2(\Gamma_L))}.
\end{split}\]
For the first term on the right-hand side, we use Lemma \ref{lem:ba7} (with $s=1/4$, $\sigma=1$, $p=2$, $q=\infty$, and the fact that $\cof{\hat F}(\cdot, 0)=I$), \eqref{eq:tildev4.0},   
and \eqref{eq:iv4}, to obtain 
\[\begin{split}
&\norm{\frac{\partial }{\partial t}\left(\nabla{\tilde v}\right)\cdot\cof{\hat F}\cdot(I-\cof{\hat F})}_{H^{1/4}(0,T;L^2(\Gamma_L))} 
\\
&\qquad\quad\le k_{16}T^{1/4} \norm{\frac{\partial}{\partial t}\nabla\tilde v}_{H^{1/4}(0,T;L^2(\Gamma_L))} \norm{\cof{\hat F}\cdot(\cof{\hat F}-I)}_{H^1(0,T;L^\infty(\Gamma_L))}
\\
&\qquad\quad\le k_{17}T^{1/4}R\norm{\tilde v}_{\mathcal V_2}.
\end{split}\]
For the second term, we apply Lemma \ref{lem:ba7} with $p=2=4$, $\sigma=1$ and $s=1/4$, by \eqref{eq:iv7},
\begin{multline*}
\norm{\nabla\tilde v\cdot\frac{\partial}{\partial t}\cof{\hat F}\cdot(I-\cof{\hat F})}_{H^{1/4}(0,T;L^2(\Gamma_L))}
\\
\le k_{18}R\left[T^{1/4}\norm{\tilde v}_{\mathcal V_2}+ T^{1/12}\norm{V_0}_{H^2(\Omega_L)}\right]. 
\end{multline*}
For the last term, we proceed similarly to the latter. 
\end{proof}

\begin{proof}[Proof of Theorem \ref{th:existence-eom-L-c-hat}]
Fix $T\in (0,1)$. 
To prove existence and uniqueness of solutions to \eqref{eq:eom-L-c-hat}-\eqref{eq:ic-hat}, we need to show that the map 
\[\begin{split}
\mathcal F:\; (\tilde v,\tilde p)\in \mathcal E\mapsto \mathcal F(\tilde v,\tilde p)=(v,p)&\text{ solution to \eqref{eq:fluid} }
\\
&\qquad\text{(corresponding to the data $(u_0,u_1,V_0)$)}
\end{split}\]
admits a fixed point. Following Remark \ref{rm:fixedpoint0data}, we need to show that 
\begin{itemize}
\item[(I)] $\mathcal F:\; \mathcal E \to \mathcal E$; and 
\item[(II)] The map \[\begin{split}
{}_0\mathcal F:\; (\tilde v,\tilde p)\in {}_0\mathcal E&\mapsto \mathcal F(\tilde v,\tilde p)=(v,p)\text{ solution to \eqref{eq:fluid} }
\\
&\qquad\qquad\qquad \text{(corresponding to $(u_0=0,u_1=0,V_0=0)$)}
\end{split}\]
satisfies $\norm{{}_0\mathcal F(\tilde v,\tilde p)}_{\mathcal V_2\times \Pi_2}< \alpha_T \norm{(\tilde v,\tilde p)}_{\mathcal V_2\times \Pi_2}$ for some $\alpha_T\in (0,1)$,  where ${}_0\mathcal E$ is defined in \eqref{eq:B_M_0data} (see also \eqref{eq:B_M}).
\end{itemize}
Let $(\tilde v,\tilde p)\in \mathcal E$. Using Proposition \ref{prop:elastic}, we find $u\in \mathcal U$, solution to \eqref{eq:elastic}. 
Now use the same $(\tilde v,\tilde p)\in \mathcal E$ and the function $u$ just found to obtain the solution $(v,p)$ to \eqref{eq:fluid}, according to Proposition \ref{prop:fluid}. Since $(v,p)$ enjoys the following regularity
\[\begin{split}
&v\in L^2(0,T;H^3(\Omega_L))\cap H^1(0,T;H^2(\Omega_L)) \cap H^2(0,T;L^2(\Omega_L))
\\
&p\in L^2(0,T;H^2(\Omega_L))\cap H^1(0,T;H^1(\Omega_L))\cap H^{1+1/4}(0,T;L^2(\Gamma_L)),
\end{split}\]
recalling the definition of $\mathcal E$ (see \eqref{eq:B_M})), we conclude that $\mathcal F(\tilde v,\tilde p)=(v,p)\in \mathcal E$.  

It remains to show (II). 
From the estimate \eqref{eq:estimate-fluid} with zero initial data, we have that  
\[
\norm{{}_0\mathcal F(\tilde v,\tilde p)}_{\mathcal V_2\times \Pi_2}\le \alpha_T \norm{(\tilde v,\tilde p)}_{\mathcal V_2\times \Pi_2}
\]
with $\alpha_T:=C_T(R+1)T^a$, where $a\in (0,1)$ and $C_T$ is the positive constant, depending on $T$ and non-decreasing with $T$, found in Proposition \ref{prop:fluid}. We note that $\alpha_T\in (0,1)$ provided that $T$ is small enough. By Banach fixed-point theorem, we conclude that there exists a unique fixed point $(\bar v,\bar p)$. Let $\bar u$ be the corresponding solution of \eqref{eq:elastic} with $\tilde v=\bar v$, then the triple $(\bar u,\bar v,\bar p)$ solves \eqref{eq:eom-L-c-hat}.

Consider now $(u,v,p)\in \mathcal U\times\mathcal V_2\times \Pi_2$ solution to \eqref{eq:elastic}--\eqref{eq:fluid} in the maximal time interval $[0,T)$ with $T\in (0,1)$ satisfying $C_T(R+1)T^a<1$ (where $C_T$ is the positive constant, depending on $T$ and non-decreasing with $T$, found in Proposition \ref{prop:fluid}). Let us prove the energy estimate \eqref{eq:energy_hateq}. We note that the triple $(u,v,p)$ satisfies \eqref{eq:elastic} and \eqref{eq:fluid} with $(\tilde v,\tilde p)=(v,p)$. The forcing terms $g(v,p)$, $f(v,p)$ and $d(v,p)$ in \eqref{eq:fluid} still satisfy Lemma \ref{lem:estimatefgd} (with $(\tilde v,\tilde p)=(v,p)$ and non-zero initial conditions). Thus, we can apply again 
\eqref{eq:estimate-fluid} (with $(\tilde v,\tilde p)=(v,p)$ and non-zero initial conditions) to find that 
\begin{equation*}
\begin{split}
&(1-C_T (R+1)T^a )\left[\norm{v}_{{\mathcal V}_2}+\norm{p}_{\Pi_2}\right]\le C_T (R+1)  \left(\norm{V_0}_{H^{5/2}(\Omega_L)}+\norm{V_1}_{H^1(\Omega_L)}\right. 
\\
&\qquad \qquad \qquad \qquad \qquad\left.\phantom{T^{1/4}}+\norm{Q_0}_{H^{3/2}(\Omega_L)}  +\norm{u_0}_{H^{3}(\Omega_B)}+\norm{u_1}_{H^{3/2}(\Omega_B)}
\right).
\end{split}\end{equation*}
Choose $T\in (0,1)$ such that $1-C_T (R+1)T^a\ge1/2$. We then find that 
\begin{equation}\label{eq:estimate-energy-fluid}
\begin{split}
\norm{v}_{{\mathcal V}_2}+\norm{p}_{\Pi_2}\le & C_1\left[\norm{V_0}_{H^{5/2}(\Omega_L)}+\norm{V_1}_{H^1(\Omega_L)} \right.
\\
&\left.+\norm{Q_0}_{H^{3/2}(\Omega_L)}+\norm{u_0}_{H^{3}(\Omega_B)}+\norm{u_1}_{H^{3/2}(\Omega_B)}\right].
\end{split}\end{equation}
Using the latter in \eqref{eq:estimate-elastic}$_1$ for the estimate of the left-hand side with $\tilde v=v$, we obtain 
\begin{multline}\label{eq:estimate-energy-elastic}
\norm{u}_{\mathcal U}
\le C_2\left[\norm{u_0}_{H^{3}(\Omega_B)}+\norm{u_1}_{H^{3/2}(\Omega_B)}+\norm{V_0}_{H^{5/2}(\Omega_L)}\right.
\\
\left.+\norm{V_1}_{H^1(\Omega_L)}+\norm{Q_0}_{H^{3/2}(\Omega_L)}\right].
\end{multline}
Using \eqref{eq:estimate-compatibility-V1} and \eqref{eq:estimate-compatibility-Q0}, in \eqref{eq:estimate-energy-fluid} and \eqref{eq:estimate-energy-elastic}, we obtain \eqref{eq:energy_hateq}.

To conclude the proof of this theorem, consider the map  
\[
\chi(\cdot,t):\;X\in \Omega_L\mapsto \chi(X,t):=X+\int^t_0v(X,s)\; ds\ \in \Omega_L.
\]
We first notice that $\chi$ is injective, and so $J:=\det(\nabla \chi)\ne 0$. From the regularity of $v$ and the estimate \eqref{eq:energy_hateq}, 
it immediately follows that $\chi\in B_R$ provided that $T$ is small enough and $R\ge \norm{V_0}_{H^{5/2}(\Omega_L)}$ is large enough. 
\end{proof}

\section{Proof of Theorem \ref{th:main} and concluding remarks}\label{sec:proof-main}

In this final section, we will prove Theorem \ref{th:main}. We start by noticing that the strategy employed to prove Theorem \ref{th:existence-eom-L-c-hat} can be adapted to prove the following proposition. 

\begin{proposition}\label{prop:nStokes-hatchi-fgd}
Let $\hat\chi\in B_R$ as in the Section \ref{sec:proof-thm-eom-L-c-hat} (with corresponding $\hat F=\nabla \hat \chi$). For every 
\[\begin{split}
&g\in L^2(0,T;H^2(\Omega_L))\cap H^1(0,T;H^1(\Omega_L))\cap H^2(0,T;H^{-1}(\Omega_L)),
\\
&f\in L^2(0,T;H^1(\Omega_L))\cap H^1(0,T;L^2(\Omega_L)),
\\
&d\in L^2(0,T;H^{3/2}(\Gamma_L))\cap H^1(0,T;H^{1/2}(\Gamma_L))\cap H^{1+1/4}(0,T;L^2(\Gamma_L)),
\\
&w_0\in H^{5/2}(\Omega_L),
\end{split}\] 
there exists a non-decreasing positive function $G=G(t)$, $G(0)=0$, $G(t)\to +\infty$ as $t\to +\infty$ such that for all $T\in (0,1)$ satisfying  $G(T)(R+1)<1$ there exists a unique solution $(w,\pi)\in \mathcal V_2\times \Pi_2$ to the following initial-boundary value problem:
\begin{equation}\label{eq:nStokes-hatchi-fgd}
\begin{aligned}
&\nabla w:\cof{\hat F}=g\qquad &&\text{in }\Omega_L\times (0,T),
\\
&\rho_{_L}\frac{\partial w}{\partial t}=\div \mathcal T_{\hat \chi}(w,\pi)+f\qquad &&\text{in }\Omega_L\times (0,T),
\\
&\mathcal T_{\hat \chi}(w,\pi)\cdot\n=d\qquad &&\text{on }\Gamma_L\times(0,T),
\\
&w(\cdot,0)=w_0\qquad &&\text{in }\Omega_L. 
\end{aligned}
\end{equation}
In the above equation, 
\begin{equation}\label{eq:stress-hat-chi}
\mathcal T_{\hat \chi}(w,\pi):=-\pi\cof{\hat F}+\mu\nabla w\cdot(\cof{\hat F})^T\cdot\cof{\hat F}+\mu\cof{\hat F}\cdot(\nabla w)^T\cdot\cof{\hat F},
\end{equation}
and the data satisfy the following compatibility conditions:
\begin{itemize}
\item There exist $(w_1,\pi_0)\in H^1(\Omega_L)\times H^{3/2}(\Omega_L)$ 
such that 
\begin{equation}\label{eq:compatibility-nStokes-hatchi-fgd}
\begin{aligned}
&\div w_0=g &&\text{in }\Omega_L,
\\
&\div w_1=\nabla w_0 :(\nabla v_0)^T+\frac{\partial g}{\partial t}(\cdot,0)&&\text{in }\Omega_L,
\\
&\rho_{_L} w_1=-\nabla \pi_0+\mu \Delta w_0+f(\cdot,0)&&\text{in }\Omega_L,
\\
&\pi_0=\left[2\mu D(w_0)\cdot\n-d(\cdot,0)\right]\cdot\n&&\text{on }\Gamma_L.
\end{aligned}
\end{equation}
\end{itemize}
Furthermore, the following estimate holds with a positive constant $C$ independent of $T$
\begin{equation*} 
\begin{split}
\norm{w}_{\mathcal V_2}+\norm{\pi}_{\Pi_2}\le C&\left[(1+\norm{v_0}_{H^{5/2}(\Omega_L)})\norm{w_0}_{H^{5/2}(\Omega_L)}+\norm{g}_{L^2(0,T;H^2(\Omega_L))}\right.
\\
&\qquad
+\norm{\frac{\partial g}{\partial t}}_{L^2(0,T;H^{1}(\Omega_L))}+\norm{\frac{\partial^2 g}{\partial t^2}}_{L^2(0,T;H^{-1}(\Omega_L))}
\\
&\qquad +\norm{f}_{L^2(0,T;H^1(\Omega_L))}+\norm{\frac{\partial f}{\partial t}}_{L^2((0,T)\times \Omega_L)}
\\
&\qquad +\norm{d}_{ L^2(0,T;H^{3/2}(\Gamma_L))}+\norm{\frac{\partial d}{\partial t}}_{ L^2(0,T;H^{1/2}(\Gamma_L))}
\\
&\qquad \left.+\norm{d}_{H^{1/4}(0,T;L^2(\Gamma_L))}+\norm{\frac{\partial d}{\partial t}}_{H^{1/4}(0,T;L^2(\Gamma_L))}\right].
\end{split}
\end{equation*}
\end{proposition}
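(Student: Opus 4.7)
The plan is to adapt the fixed-point argument used in the proof of Theorem \ref{th:existence-eom-L-c-hat} (specifically the route through Proposition \ref{prop:fluid}), decoupling it from the particular forms of $g,f,d$ treated in Lemma \ref{lem:estimatefgd}. The idea is to recast \eqref{eq:nStokes-hatchi-fgd} as a perturbation of the classical non-homogeneous Stokes system covered by Theorem \ref{th:nStokes_r}, where the perturbation is driven by $\cof{\hat F}-I$ and $\partial_t\cof{\hat F}$, both of which are controlled by $T^{1/2}R$ (or higher powers of $T$) via Lemma \ref{lem:estimate_hatchi}.

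Concretely, I would rewrite \eqref{eq:nStokes-hatchi-fgd} by adding and subtracting the standard divergence, Laplacian/pressure gradient, and Cauchy stress, producing a standard Stokes system with right-hand sides that depend on the unknown:
\begin{align*}
\div w &= g + (I-\cof{\hat F}):\nabla w, \\
\rho_L \partial_t w - \mu\Delta w + \nabla \pi &= f - \bigl[\div \mathcal T(w,\pi) - \div \mathcal T_{\hat\chi}(w,\pi)\bigr],\\
\mathcal T(w,\pi)\cdot \n &= d + \bigl[\mathcal T(w,\pi) - \mathcal T_{\hat\chi}(w,\pi)\bigr]\cdot \n,\\
w(\cdot,0) &= w_0.
\end{align*}
Given $(\tilde w,\tilde\pi)$ in the affine closed subset $\mathcal E'\subset \mathcal V_2\times\Pi_2$ matching $(w_0,w_1,\pi_0)$ at $t=0$, plug $(\tilde w,\tilde\pi)$ into the right-hand sides above and let $\Phi(\tilde w,\tilde \pi):=(w,\pi)$ be the unique solution of the resulting standard Stokes problem supplied by Theorem \ref{th:nStokes_r}. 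Because $\hat\chi(\cdot,0)=I$ implies $\cof{\hat F}(\cdot,0)=I$, all perturbation terms vanish at $t=0$, so the Theorem \ref{th:nStokes_r} compatibility conditions for $\Phi(\tilde w,\tilde\pi)$ reduce precisely to \eqref{eq:compatibility-nStokes-hatchi-fgd}, and $\Phi(\mathcal E')\subset\mathcal E'$.

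Linearity of the perturbations in $(\tilde w,\tilde\pi)$ then shows that $\Phi(\tilde w_1,\tilde\pi_1)-\Phi(\tilde w_2,\tilde\pi_2)$ solves the zero-initial-data standard Stokes problem driven by forcings of the form $(I-\cof{\hat F}):\nabla(\tilde w_1-\tilde w_2)$ together with analogous higher-order and pressure-gradient contributions and $\partial_t\cof{\hat F}$ terms. Estimating these exactly as in the proof of Lemma \ref{lem:estimatefgd}, using \eqref{eq:iv2}–\eqref{eq:iv7}, gives
\[
\|\Phi(\tilde w_1,\tilde\pi_1)-\Phi(\tilde w_2,\tilde\pi_2)\|_{\mathcal V_2\times\Pi_2}\le C\,G(T)(R+1)\,\|(\tilde w_1-\tilde w_2,\tilde\pi_1-\tilde\pi_2)\|_{\mathcal V_2\times\Pi_2}
\]
for some non-decreasing $G$ with $G(0)=0$, so $\Phi$ is a contraction whenever $G(T)(R+1)<1$. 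Banach's fixed-point theorem delivers the unique solution $(w,\pi)\in\mathcal E'$. The claimed energy estimate follows by applying Theorem \ref{th:nStokes_r} once more to $(w,\pi)$, using the Lemma \ref{lem:estimatefgd}-type bounds to dominate the perturbation contributions by $C\,G(T)(R+1)(\|w\|_{\mathcal V_2}+\|\pi\|_{\Pi_2})$, and absorbing this term to the left-hand side; the prefactor $1+\|v_0\|_{H^{5/2}(\Omega_L)}$ arises exactly as in \eqref{eq:energy_hateq} through the $H^{1/4}(0,T;L^2(\Gamma_L))$-estimates of the boundary perturbation (cf.\ Lemma \ref{lem:r-tilde-vq} and \eqref{eq:iv7}).

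The main obstacle is the technical verification that the perturbation terms constructed from $(\tilde w,\tilde\pi)$ and $\cof{\hat F}-I$ land in the precise spaces required by Theorem \ref{th:nStokes_r}—namely $L^2(0,T;H^2(\Omega_L))\cap H^1(0,T;H^1(\Omega_L))\cap H^2(0,T;H^{-1}(\Omega_L))$ for the divergence source, $L^2(0,T;H^1(\Omega_L))\cap H^1(0,T;L^2(\Omega_L))$ for the momentum source, and $L^2(0,T;H^{3/2}(\Gamma_L))\cap H^1(0,T;H^{1/2}(\Gamma_L))\cap H^{5/4}(0,T;L^2(\Gamma_L))$ for the Cauchy stress boundary data—with small coefficient as $T\to 0^+$. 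These are exactly the computations of Lemma \ref{lem:estimatefgd}; the most delicate pieces are the $H^{1/4}$ time-regularity estimates on $\Gamma_L$ via Lemma \ref{lem:ba7} and \eqref{eq:iv7}, and the $H^{-1}$ control of $\partial_t^2$ of the divergence source, which exploits the Piola identity \eqref{eq:piola-condition} to move a spatial derivative off the low-regularity factor onto $\cof{\hat F}-I$.
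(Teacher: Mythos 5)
Your proposal matches the paper's proof: you decompose \eqref{eq:nStokes-hatchi-fgd} into a classical non-homogeneous Stokes system (Theorem \ref{th:nStokes_r}) with modified data $\tilde g = g+(I-\mathrm{Cof}(\hat F)):\nabla\tilde w$, $\tilde f$, $\tilde d$ absorbing the $\cof{\hat F}$-perturbations, freeze $(\tilde w,\tilde\pi)\in\mathcal E$ in those perturbations, verify the modified data land in the required spaces with a small-in-$T$ coefficient exactly as in Lemma \ref{lem:estimatefgd}, and close with Banach fixed point plus the absorption argument for the energy bound. The only details the paper leaves implicit are the ones you also leave implicit (the verbatim reuse of the Lemma \ref{lem:estimatefgd} estimates and the Remark \ref{rm:fixedpoint0data}/\ref{rm:compatibility-conditions-hatchi} observations), so your argument is correct and essentially identical.
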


\begin{proof}
As already mentioned, the proof follows closely the strategy of the proof of Theorem  \ref{th:existence-eom-L-c-hat}. We will then just sketch the main steps. For every fixed $(\tilde w,\tilde \pi)\in \mathcal E$ with $V_0=w_0$, $V_1=w_1$ and $Q_0=\pi_0$ (cf. \eqref{eq:B_M}), we solve the non-homogeneous Stokes problem 
\begin{equation*}
\begin{aligned}
&\div w=\tilde g\qquad &&\text{in }\Omega_L\times (0,T),
\\
&\rho_{_L}\frac{\partial w}{\partial t}=\div \mathcal T(w,\pi)+\tilde f\qquad &&\text{in }\Omega_L\times (0,T),
\\
&\mathcal T(w,\pi)\cdot\n=\tilde d\qquad &&\text{on }\Gamma_L\times(0,T),
\\
&w(\cdot,0)=w_0\qquad &&\text{in }\Omega_L,
\end{aligned}
\end{equation*}
where 
\[\begin{split}
&\tilde g:=g+(I-\cof{\hat F}):\nabla \tilde w,
\\
&\tilde f=\div \mathcal T_{\hat \chi}(\tilde w,\tilde \pi)-\div \mathcal T(\tilde w,\tilde \pi)+f,
\\
&\tilde d=-\mathcal T_{\hat \chi}(\tilde w,\tilde \pi)\cdot\n+\mathcal T(\tilde w,\tilde \pi)\cdot\n+d.
\end{split}\]
For the above non-homogeneous Stokes problem, we can then use Theorem \ref{th:nStokes_r} because the forces $\tilde g$, $\tilde f$ and $\tilde d$ are in the ``correct'' functional spaces. The proof of the latter is the same as that of Lemma \ref{lem:estimatefgd}. The fixed point argument and the corresponding energy estimate then follow as in the proof of Theorem  \ref{th:existence-eom-L-c-hat} (we note that the observations made in Remark \ref{rm:fixedpoint0data} and Remark \ref{rm:compatibility-conditions-hatchi} can be easily adapted to the problem at hand). 
\end{proof}

We are now ready to prove the main theorem of this paper. 

\begin{proof}[Proof of Theorem \ref{th:main}]

Consider the map $\Lambda$ defined in \eqref{eq:fixed-point-map-chi}, where $v$ --together with $u$ and $p$-- solves \eqref{eq:eom-L-c-hat} with initial condition $(u_0,u_1,V_0=v_0)$ according to Theorem \ref{th:existence-eom-L-c-hat}. Thanks to \eqref{eq:energy_hateq}, we can then choose $R$ in \eqref{eq:B_R} sufficiently large, and possibly take a smaller $T$ satisfying \eqref{eq:small-time-R}, so that $\Lambda(\hat \chi)=\chi \in B_R$, where
\[
\chi(X,t)=X+\int^t_0v(X,s)\; d s\qquad\text{for all }(X,t)\in \Omega_L\times [0,T). 
\]
It remains to show that $\Lambda$ is a contraction. Since $B_R$ is a closed subset of $\mathcal D$ (defined in \eqref{eq:spaceD}), it is enough to show that there exists a constant $\alpha_T\in (0,1)$ such that 
\begin{equation}\label{eq:contraction-Lambda}
\norm{\Lambda(\hat\chi^{(1)})-\Lambda(\hat\chi^{(2)})}_{\mathcal D}\le \alpha_T\norm{\hat \chi^{(1)}-\hat \chi^{(2)}}_{\mathcal D}
\end{equation}
for all $\hat\chi^{(1)}, \hat\chi^{(2)}\in B_R$ (with corresponding gradients $\hat F^{(1)}$ and $\hat F^{(2)}$), where 
\begin{equation}\label{eq:Lambda-i}
\Lambda(\hat\chi^{(i)})=\chi^{(i)}(X,t)=X+\int^t_0v^{(i)}(X,s)\; d s\quad\text{for all }(X,t)\in \Omega_L\times [0,T),\  i=1,2,
\end{equation}
with $(u^{(i)},v^{(i)},p^{(i)})$ the solution to \eqref{eq:eom-L-c-hat}, corresponding to $\hat\chi^{(i)}$, with initial condition $(u_0,u_1,V_0=v_0)$ (according to Theorem \ref{th:existence-eom-L-c-hat}). 

Set 
\[
\xi:=u^{(1)}-u^{(2)},\qquad w:=v^{(1)}-v^{(2)},\qquad \pi:=p^{(1)}-p^{(2)}.
\]
From \eqref{eq:Lambda-i} and the definition of the norm $\norm{\cdot}_{\mathcal D}$ (see \eqref{eq:normD}$_1$), to prove \eqref{eq:contraction-Lambda}, it is enough to prove that there exists $\beta_T\in (0,1)$ such that 
\[\begin{split}
\norm{w}_{L^2(0,T;H^3(\Omega_L))}&+\norm{\frac{\partial w}{\partial t}}_{L^2(0,T;H^2(\Omega_L))} +\norm{\frac{\partial^2 w}{\partial t^2}}_{L^2(0,T;L^2(\Omega_L))}
\\
\le \beta_T & \left[\norm{\hat v^{(1)}-\hat v^{(2)}}_{L^2(0,T;H^3(\Omega_L))}+\norm{\frac{\partial}{\partial t}\hat v^{(1)}-\frac{\partial}{\partial t}\hat v^{(2)}}_{L^2(0,T;H^2(\Omega_L))} \right.
\\
&\qquad\qquad\qquad\qquad\qquad\qquad \left.+\norm{\frac{\partial^2}{\partial t^2}\hat v^{(1)}-\frac{\partial^2}{\partial t^2}\hat v^{(2)}}_{L^2(0,T;L^2(\Omega_L))}\right],
\end{split}\]
where
\[
\hat v^{(i)}=\frac{\partial \hat\chi^{(i)}}{\partial t}\qquad\text{ for }i=1,2.
\]
We notice that the triple $(\xi,w,\pi)$ satisfies  
\begin{equation}\label{eq:difference-elastic}
\begin{aligned}
&\rho_B\frac{\partial^2 \xi}{\partial t^2}=\div P(\xi) &&\text{in }\Omega_B\times(0,T),
\\
&P(\xi)\cdot \n=0 &&\text{on }\Gamma_B\times(0,T),
\\
&\frac{\partial \xi}{\partial t}=w&&\text{on }\Gamma_L\times(0,T),
\\
&u(\cdot,0)=0,\quad\frac{\partial u}{\partial t}u(\cdot,0)=0&&\text{in }\Omega_B,
\end{aligned}\end{equation}
and 
\begin{equation}\label{eq:difference-flui}
\begin{aligned}
&\nabla w:\cof{\hat F^{(1)}}=\nabla v^{(2)}:[\cof{\hat F^{(2)}}-\cof{\hat F^{(1)}}]&&\text{in }\Omega_L\times (0,T),
\\
&\rho_{_{L}}\frac{\partial w}{\partial t}=\div \mathcal T_{\hat \chi^{(1)}}(w,\pi)+\div \mathcal T_{\hat\chi^{(1)}}(v^{(2)},p^{(2)})-\div \mathcal T_{\hat\chi^{(2)}}(v^{(2)},p^{(2)})&&\text{in }\Omega_L\times (0,T),
\\
&\mathcal T_{\hat\chi^{(1)}}(w,\pi)\cdot \n=[P(\xi)-\mathcal T_{\hat\chi^{(1)}}(v^{(2)},p^{(2)})+\mathcal T_{\hat\chi^{(2)}}(v^{(2)},p^{(2)})]\cdot \n
&&\text{on }\Gamma_L\times(0,T),
\\
&w(\cdot,0)=0&&\text{in }\Omega_L,
\end{aligned}
\end{equation}
where $P$ is defined in \eqref{eq:piola}, and $\mathcal T_{\hat\chi}$ is given in \eqref{eq:stress-hat-chi}. By Proposition \ref{prop:nStokes-hatchi-fgd} with $T\in (0,1)$ satisfying \eqref{eq:small-time-R} (and zero initial conditions), we find that $(w,\pi)$ satisfies the following estimate
\begin{equation}\label{eq:diff-energy}
\begin{split}
\norm{w}_{\mathcal V_2}+&\norm{\pi}_{\Pi_2}\le C\left[\norm{\nabla v^{(2)}:[\cof{\hat F^{(2)}}-\cof{\hat F^{(1)}}]}_{L^2(0,T;H^2(\Omega_L))}\right.
\\
&\quad
+\norm{\frac{\partial }{\partial t}\left\{\nabla v^{(2)}:[\cof{\hat F^{(2)}}-\cof{\hat F^{(1)}}]\right\}}_{L^2(0,T;H^{1}(\Omega_L))}
\\
&\quad +\norm{\frac{\partial^2 }{\partial t^2}\left\{\nabla v^{(2)}:[\cof{\hat F^{(2)}}-\cof{\hat F^{(1)}}]\right\}}_{L^2(0,T;H^{-1}(\Omega_L))}
\\
&\quad +\norm{\div \mathcal T_{\hat\chi^{(1)}}(v^{(2)},p^{(2)})-\div \mathcal T_{\hat\chi^{(2)}}(v^{(2)},p^{(2)})}_{L^2(0,T;H^1(\Omega_L))}
\\
&\quad+\norm{\frac{\partial }{\partial t}\left[\div \mathcal T_{\hat\chi^{(1)}}(v^{(2)},p^{(2)})-\div \mathcal T_{\hat\chi^{(2)}}(v^{(2)},p^{(2)})\right]}_{L^2((0,T)\times \Omega_L)}
\\
&\quad +\norm{P(\xi)\cdot\n}_{ L^2(0,T;H^{3/2}(\Gamma_L))} +\norm{\frac{\partial }{\partial t}P(\xi)\cdot\n}_{ L^2(0,T;H^{1/2}(\Gamma_L))}
\\
&\quad+\norm{[\mathcal T_{\hat\chi^{(1)}}(v^{(2)},p^{(2)})-\mathcal T_{\hat\chi^{(2)}}(v^{(2)},p^{(2)})]\cdot\n}_{ L^2(0,T;H^{3/2}(\Gamma_L))}
\\
&\quad+\norm{\frac{\partial }{\partial t}[\mathcal T_{\hat\chi^{(1)}}(v^{(2)},p^{(2)})-\mathcal T_{\hat\chi^{(2)}}(v^{(2)},p^{(2)})]\cdot\n}_{ L^2(0,T;H^{1/2}(\Gamma_L))}
\\
&\quad+\norm{P(\xi)\cdot\n}_{H^{1/4}(0,T;L^2(\Gamma_L))}+\norm{\frac{\partial}{\partial t}P(\xi)\cdot\n}_{H^{1/4}(0,T;L^2(\Gamma_L))} 
\\
&\quad+\norm{[\mathcal T_{\hat\chi^{(1)}}(v^{(2)},p^{(2)})-\mathcal T_{\hat\chi^{(2)}}(v^{(2)},p^{(2)})]\cdot\n}_{H^{1/4}(0,T;L^2(\Gamma_L))}
\\
&\quad \left.+\norm{\frac{\partial}{\partial t}\left[\mathcal T_{\hat\chi^{(1)}}(v^{(2)},p^{(2)})-\mathcal T_{\hat\chi^{(2)}}(v^{(2)},p^{(2)})\right]\cdot\n}_{H^{1/4}(0,T;L^2(\Gamma_L))}\right].
\end{split}
\end{equation}
From \eqref{eq:Lambda-i} and Jensen's inequality, it follows that there exists a positive constant $c$ independent of $T$ such that 
\begin{equation}\label{eq:diff-hatF-hatFt}\begin{split}
&\norm{\hat F^{(1)}-\hat F^{(2)}}_{C([0,T];H^2(\Omega_L))}\le cT^{1/2}\norm{\hat v^{(1)}-\hat v^{(2)}}_{L^2(0,T;H^3(\Omega_L))},
\\
&\norm{\frac{\partial}{\partial t}\hat F^{(1)}-\frac{\partial}{\partial t}\hat F^{(2)}}_{C([0,T];H^1(\Omega_L))}\le cT^{1/2}\norm{\frac{\partial}{\partial t}\hat v^{(1)}-\frac{\partial}{\partial t}\hat v^{(2)}}_{L^2(0,T;H^1(\Omega_L))}.
\end{split}\end{equation}
In addition, %by Remark \ref{rm:trace-regularity-v} 
we have
\[\frac{\partial}{\partial t}\hat v^{(1)}-\frac{\partial}{\partial t} \hat v^{(2)}\in L^2(0,T;H^2(\Omega_L))\cap H^1(0,T;L^2(\Omega_L))
\hookrightarrow C([0,T];H^1(\Omega_L))
\]
since  
\[\hat v^{(1)}(\cdot, 0)-\hat v^{(2)}(\cdot, 0)=0,\qquad \frac{\partial}{\partial t}\hat v^{(1)}(\cdot, 0)- \frac{\partial}{\partial t}\hat v^{(2)}(\cdot, 0)=0
\quad\text{ in }\Omega_L. 
\]
Then, by Lemma \ref{lem:holder-st} (with $p=2$, $s_1=1$, $q=+\infty$ and $s_2=0$), we obtain that 
\begin{equation}\label{eq:diff-hatFtt}\begin{split}
&\norm{\frac{\partial^2}{\partial t^2}\hat F^{(1)}-\frac{\partial^2}{\partial t^2}\hat F^{(2)}}_{L^{2/\theta}(0,T;H^{\theta}(\Omega_L))}
\\
&\quad \le c_1\norm{\frac{\partial}{\partial t}\hat v^{(1)}-\frac{\partial}{\partial t}\hat v^{(2)}}^\theta_{L^2(0,T;H^1(\Omega_L))}\norm{\frac{\partial }{\partial t}\hat v^{(1)}-\frac{\partial}{\partial t}\hat v^{(2)}}^{1-\theta}_{C([0,T];L^2(\Omega_L))}, 
\end{split}\end{equation}
for $\theta\in[1/2,1]$. In particular, choosing $\theta=3/4$, and using and Sobolev embedding theorem and Proposition \ref{prop:traceat0}, we have that 
\begin{equation}\label{eq:diff-hatF-tt2}
\begin{split}
&\norm{\frac{\partial^2}{\partial t^2}\hat F^{(1)}-\frac{\partial^2}{\partial t^2}\hat F^{(2)}}_{L^{2}(0,T;L^3(\Omega_L))}
\le c_2 T^{1/8}\left[\norm{\frac{\partial}{\partial t}\hat v^{(1)}-\frac{\partial}{\partial t}\hat v^{(2)}}_{L^2(0,T;H^2(\Omega_L))}
\right.
\\
&\qquad\qquad\qquad\qquad\qquad\qquad\qquad\qquad\qquad\left.+\norm{\frac{\partial^2}{\partial t^2}\hat v^{(1)}-\frac{\partial^2}{\partial t^2}\hat v^{(2)}}_{L^2(0,T;L^2(\Omega_L))}\right]. 
\end{split}\end{equation}

We further notice that, from Lemma \ref{lem:estimate_hatchi} part $(ii)$ and \eqref{eq:iv1}, 
\[
\norm{\frac{\partial}{\partial t}\nabla \hat F^{(i)}}_{L^2(0,T;H^2(\Omega_L))}+
\norm{\hat F^{(i)}}_{C([0,T];H^2(\Omega_L))}\le C(R+1)\qquad\text{for }i=1,2.
\]
Using the latter with \eqref{eq:diff-hatF-hatFt} and \eqref{eq:diff-hatFtt} (with $\theta=1$), we obtain 
\begin{multline}\label{eq:diff-cof-ttx}
\norm{\cof{\hat F^{(1)}}-\cof{\hat F^{(2)}}}_{H^2(0,T;H^1(\Omega_L))}
\\
\le 
c_3(R+1)\left[\norm{\hat v^{(1)}-\hat v^{(2)}}_{L^2(0,T;H^3(\Omega_L))} + \norm{\frac{\partial}{\partial t}\hat v^{(1)}-\frac{\partial}{\partial t}\hat v^{(2)}}_{L^2(0,T;H^1(\Omega_L))}\right]. 
\end{multline}

We then apply Theorem \ref{th:N} to \eqref{eq:difference-elastic}. By Remark \ref{rm:trace-regularity-v} and estimate \eqref{eq:estimate-N}, trace theorem, the fact that $T\in (0,1)$ and that $w(\cdot,0)=0$, and by Lemma \ref{lem:interpolation}, we have that $\xi$ in \eqref{eq:difference-elastic} satisfies the following estimates with a constant $c_i$, $i=4,\dots,8$,  independent of $T$: 
\begin{equation}\label{eq:diff-P-tx/2}\begin{split}
&\norm{P(\xi)\cdot n}_{H^1(0,T;H^{1/2}(\Gamma_L))} 
\\
& \quad \le c_4T^{1/2}\left[T\norm{w}_{L^2(0,T;H^{3}(\Omega_L))}+\norm{w}_{H^{3/2}(0,T;H^1(\Omega_L))}\right]+ \norm{w}_{L^2(0,T;H^{2}(\Omega_L))}
\\
&\quad \le c_5T^{1/2}\left[\norm{w}_{L^2(0,T;H^{3}(\Omega_L)})+\norm{w}_{H^{3/2}(0,T;H^1(\Omega_L))}+ \norm{\frac{\partial w}{\partial t}}_{L^2(0,T;H^{2}(\Omega_L))}\right]
\\
&\quad \le c_6T^{1/2}\left[\norm{w}_{L^2(0,T;H^{3}(\Omega_L))}
+ \norm{\frac{\partial w}{\partial t}}_{L^2(0,T;H^{2}(\Omega_L))}
+\norm{\frac{\partial^2w}{\partial t^2}}_{L^2(0,T;L^2(\Omega_L))}\right].
\end{split}\end{equation}
In addition, by \eqref{eq:estimate-N}$_2$ and Lemma \ref{lem:tsigma-s} (with $\sigma=1$ and $s=1/2$, since $w(\cdot,0)=0$ in $\Omega_L$), we also have that 
\begin{equation}\label{eq:diff-P-1.5}\begin{split}
\norm{P(\xi)\cdot\n}_{H^{3/2}(0,T;L^2(\Gamma_L))}&+\norm{P(\xi)\cdot\n}_{L^2(0,T;H^{3/2}(\Gamma_L))}
\\
&\quad \le c_7\left[T^{1/2}\norm{w}_{L^2(0,T;H^{3}(\Omega_L))}+\norm{w}_{H^{1/2}(0,T;H^2(\Omega_L))}\right]
\\
&\quad \le c_8T^{1/2}\left[\norm{w}_{L^2(0,T;H^{3}(\Omega_L))}+\norm{\frac{\partial w}{\partial t}}_{L^2(0,T;H^2(\Omega_L)}\right].
\end{split}\end{equation}

We finally note that, by Remark \ref{rm:trace-regularity-v} and \eqref{eq:energy_hateq}, it follows that 
\begin{multline*}
\norm{v^{(2)}}_{H^{1/4}(0,T;H^1(\Gamma_L))}+\norm{\frac{\partial v^{(2)}}{\partial t}}_{H^{1/4}(0,T;H^1(\Gamma_L))}
\\
+\norm{p}_{H^{1/4}(0,T;H^1(\Gamma_L))}+\norm{\frac{\partial p}{\partial t}}_{H^{1/4}(0,T;L^2(\Gamma_L))}\le cR.
\end{multline*}
Using the latter, and the estimates \eqref{eq:diff-hatF-hatFt}, \eqref{eq:diff-hatF-tt2}, \eqref{eq:diff-cof-ttx}, \eqref{eq:diff-P-tx/2}, and  \eqref{eq:diff-P-1.5} together with \eqref{eq:energy_hateq} on the right-hand side \eqref{eq:diff-energy}, we can find a constant $c>0$ (independent of $T$) and $a\in (0,1)$ such that 
\[\begin{split}
\norm{w}_{\mathcal V_2}&+\norm{\pi}_{\Pi_2}\le cT^a \left[\norm{\hat v^{(1)}-\hat v^{(2)}}_{L^2(0,T;H^3(\Omega_L))}\right. 
\\
&\left. +\norm{\frac{\partial}{\partial t}\hat v^{(1)}-\frac{\partial}{\partial t}\hat v^{(2)}}_{L^2(0,T;H^2(\Omega_L))}
+\norm{\frac{\partial^2}{\partial t^2}\hat v^{(1)}-\frac{\partial^2}{\partial t^2}\hat v^{(2)}}_{L^2(0,T;L^2(\Omega_L))}\right], 
\end{split}\]
and from this, the contraction property follows by choosing $T$ sufficiently small.  
\end{proof}

\begin{remark}\label{rm:forces}
The strategy used in this paper can be used to prove the existence of a local (in time) solution to  \eqref{eq:eom-L-c}--\eqref{eq:ic-L} when external body forces are applied to the fluid and/or on the solid. This methodology would allow the proof of existence of solutions to the corresponding forced system\footnote{Such a system is obtained by adding a forcing term $F_1$ and $F_2$ at the right-hand side of \eqref{eq:eom-L-c}$_1$ and \eqref{eq:eom-L-c}$_3$, respectively. }, in the same regularity class as Theorem \ref{th:main}, if the fluid and the solid are subject to body forces $F_1$ and $F_2$ with regularity
\[\begin{split}
&F_1\in L^2(0,T;H^{3/2}(\Omega_B))\cap H^{3/2}(0,T;H^{1/2}(\Omega_B)),
\\
&F_2 \in  L^2(0,T;H^1(\Omega_L))\cap H^1(0,T;L^2(\Omega_L)),
\end{split}\]
respectively (and satisfying suitable compatibility conditions). We do not present the details here because this does not seem to add any further difficulty to the mathematical treatment. 
\end{remark}

We conclude this paper by mentioning the following result on continuous dependence upon initial data. We first notice that uniqueness of solutions to \eqref{eq:eom-L-c}--\eqref{eq:ic-L} is an immediate consequence of the uniqueness of fixed points guaranteed by Banach fixed point theorem. Although they are linear in the variables $(u,v,p)$, equations \eqref{eq:eom-L-c}--\eqref{eq:ic-L} are non-linear with respect to the variable $\chi$. Nevertheless, using an argument similar to the contraction argument in the latter proof and invoking Proposition \ref{prop:nStokes-hatchi-fgd}, the following corollary can be easily obtained.

\begin{corollary}\label{cor:cont-dep-data}
Let $(u_0^{(1)},u_1^{(1)},v_0^{(1)})$ and $(u_0^{(2)},u_1^{(2)},v_0^{(2)})\in H^{3}(\Omega_B)\times H^{3/2}(\Omega_B) \times H^{5/2}(\Omega_L)$ satisfying the {\em compatibility conditions} (i)--(iv) in Theorem \ref{th:main}, and let $(u^{(1)},v^{(1)},p^{(1)},\chi^{(1)})$ and $(u^{(2)},v^{(2)},p^{(2)},\chi^{(2)})$ be the corresponding solutions (according to Theorem \ref{th:main}) defined on the interval $[0,T^{(1)})$ and $[0,T^{(2)})$, respectively. Set $T:=\min\{T^{(1)},T^{(2)}\}$. Then, there exists a constant $C$, independent of $T$ such that 
\begin{equation}\label{eq:cont-dep-data}
\begin{split}
&\norm{u^{(1)}-u^{(2)}}_{\mathcal U}+\norm{v^{(1)}-v^{(2)}}_{\mathcal V_2}+\norm{p^{(1)}-p^{(2)}}_{\Pi_2}
\\
&\qquad \le C\left[(1+\norm{v^{(1)}_0-v^{(2)}_0}_{H^{5/2}(\Omega_L)})\norm{v^{(1)}_0-v^{(2)}_0}_{H^{5/2}(\Omega_L)}\right.
\\
&\qquad\qquad \left.+\norm{u^{(1)}_0-u^{(2)}_0}_{H^{3}(\Omega_B)}+\norm{u^{(1)}_1-u^{(2)}_1}_{H^{3/2}(\Omega_B)}\right].
\end{split}
\end{equation}
\end{corollary}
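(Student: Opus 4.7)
The plan is to adapt the contraction argument from the proof of Theorem \ref{th:main}, modified to accommodate the fact that the two solutions start from different initial conditions rather than matching ones. Set
\[
\xi := u^{(1)} - u^{(2)}, \qquad w := v^{(1)} - v^{(2)}, \qquad \pi := p^{(1)} - p^{(2)}, \qquad \zeta := \chi^{(1)} - \chi^{(2)},
\]
and subtract the equations \eqref{eq:eom-L-c}--\eqref{eq:ic-L} solved by each triple. Linearity of the Navier system for the solid gives that $\xi$ satisfies an elastic problem of the form \eqref{eq:difference-elastic}, but with non-zero initial data $\xi(\cdot,0) = u_0^{(1)}-u_0^{(2)}$ and $\partial_t \xi(\cdot,0)= u_1^{(1)}-u_1^{(2)}$. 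The pair $(w,\pi)$ satisfies a system of the form \eqref{eq:difference-flui} (with $\hat\chi^{(i)}$ replaced by $\chi^{(i)}$), whose right-hand sides $g$, $f$, $d$ are products of $\cof{F^{(1)}}-\cof{F^{(2)}}$ (and its derivatives) with derivatives of $(v^{(2)},p^{(2)})$, together with the elastic traction $P(\xi)\cdot\n$; the initial condition is now $w(\cdot,0)=v_0^{(1)}-v_0^{(2)}$, with associated Cauchy data $(V_1^{(1)}-V_1^{(2)},Q_0^{(1)}-Q_0^{(2)})$ constructed from the compatibility conditions $(iv)$ of Theorem \ref{th:main} applied to each datum.

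Next I would apply Proposition \ref{prop:nStokes-hatchi-fgd} with $\hat\chi=\chi^{(1)}\in B_R$ to estimate $\|w\|_{\mathcal V_2}+\|\pi\|_{\Pi_2}$ in terms of the $L^2$/$H^{1/4}$ norms of $g$, $f$, $d$ plus the norms of the initial data $w_0 = v_0^{(1)}-v_0^{(2)}$, $w_1$, $\pi_0$. The source terms are controlled exactly as in the final contraction step of the proof of Theorem \ref{th:main}: since $\chi^{(i)}(\cdot,0)=I$, we have the crucial identity
\[
\chi^{(1)}(X,t)-\chi^{(2)}(X,t)=\int_0^t w(X,s)\,ds,
\]
so that analogues of \eqref{eq:diff-hatF-hatFt}--\eqref{eq:diff-cof-ttx} provide bounds on $\cof{F^{(1)}}-\cof{F^{(2)}}$ (and its space/time derivatives up to order two) by positive powers of $T$ times $\|w\|_{\mathcal V_2}$. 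Applying Theorem \ref{th:N} to the elastic problem for $\xi$ (with non-zero data), together with Remark \ref{rm:trace-regularity-v} and the interpolation Lemmas \ref{lem:interpolation}--\ref{lem:tsigma-s}, gives
\[
\norm{\xi}_{\mathcal U}\le C\bigl[\norm{u_0^{(1)}-u_0^{(2)}}_{H^{3}(\Omega_B)}+\norm{u_1^{(1)}-u_1^{(2)}}_{H^{3/2}(\Omega_B)}+T^{1/8}(\norm{w}_{\mathcal V_2}+\norm{w(\cdot,0)}_{H^2(\Omega_L)})\bigr],
\]
and analogues of \eqref{eq:diff-P-tx/2}--\eqref{eq:diff-P-1.5} for $P(\xi)\cdot\n$ in the required trace spaces, picking up $T^{1/2}\|\xi\|_{\mathcal U}$ plus initial-data contributions.

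Combining these estimates and exploiting the uniform bound $\|v^{(2)}\|_{\mathcal V_2}+\|p^{(2)}\|_{\Pi_2}\le CR$ from \eqref{eq:energy_hateq}, the source terms in the estimate from Proposition \ref{prop:nStokes-hatchi-fgd} split into a part proportional to $T^a(\|w\|_{\mathcal V_2}+\|\pi\|_{\Pi_2}+\|\xi\|_{\mathcal U})$ and a part depending only on the differences of the initial data (via $w_0$, $w_1$, $\pi_0$ and the data appearing in the elastic estimate for $\xi$). Shrinking $T$ if necessary so that the constant in front of the former is less than $1/2$ (which is compatible with \eqref{eq:small-time-R}), we absorb it into the left-hand side. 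Using the analogues of \eqref{eq:estimate-compatibility-V1} and \eqref{eq:estimate-compatibility-Q0} to express $\|w_1\|_{H^1(\Omega_L)}+\|\pi_0\|_{H^{3/2}(\Omega_L)}$ in terms of $\|v_0^{(1)}-v_0^{(2)}\|_{H^{5/2}(\Omega_L)}$ and $\|u_0^{(1)}-u_0^{(2)}\|_{H^3(\Omega_B)}$ (the quadratic terms in $v_0$ producing the factor $(1+\|v_0^{(1)}-v_0^{(2)}\|_{H^{5/2}(\Omega_L)})$ that appears in \eqref{eq:cont-dep-data}) yields the desired estimate.

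The main obstacle is bookkeeping of the source-term estimates: unlike the contraction step in the proof of Theorem \ref{th:main}, where the flow maps share initial values and many data contributions drop out, here the differences of the initial data must be tracked through the compatibility relations for $(V_1,Q_0)$, which are quadratic in $v_0$. Showing that the residual contribution can be bounded by the prescribed right-hand side of \eqref{eq:cont-dep-data} — rather than by a bound that depends on the full norms of $v_0^{(i)}$ — requires the factorization $\|v_0^{(1)}\|^2-\|v_0^{(2)}\|^2 \le (\|v_0^{(1)}\|+\|v_0^{(2)}\|)\|v_0^{(1)}-v_0^{(2)}\| \le C(1+\|v_0^{(1)}-v_0^{(2)}\|)\|v_0^{(1)}-v_0^{(2)}\|$ (using that $\|v_0^{(2)}\|_{H^{5/2}(\Omega_L)}$ is controlled by $R$), which absorbs the uniform constants into $C$ and produces precisely the quasi-linear form written in the corollary.
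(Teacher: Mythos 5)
Your proposal is correct and follows exactly the route the paper indicates: the paper does not write out a detailed proof of this corollary, but simply states that it follows from the contraction argument used in the proof of Theorem~\ref{th:main} together with Proposition~\ref{prop:nStokes-hatchi-fgd}, which is precisely what you implement, including the observation that the quadratic dependence on the data in the compatibility estimates (analogues of \eqref{eq:estimate-compatibility-V1} and \eqref{eq:estimate-compatibility-Q0}) accounts for the $(1+\norm{v_0^{(1)}-v_0^{(2)}}_{H^{5/2}(\Omega_L)})$ factor. The only minor points worth flagging are notational ($v_1^{(i)},q_0^{(i)}$ rather than $V_1^{(i)},Q_0^{(i)}$ for the compatibility data in the Theorem~\ref{th:main} setting) and the need, which you implicitly acknowledge via the $\norm{w(\cdot,0)}_{H^2(\Omega_L)}$ term, to modify the analogues of \eqref{eq:diff-P-tx/2}--\eqref{eq:diff-P-1.5} since $w(\cdot,0)\ne 0$ prevents a direct application of Lemma~\ref{lem:tsigma-s}.
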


\begin{appendices}
\section{On the non-homogeneous Stokes problem and the Navier equations}\label{ap:stokes-navier}
In this appendix, we revisit some classical results on the non-homogeneous Stokes problem and some regularity results on the Navier equations of elasticity that play an important role in our paper. 

Assume that $D$ is a bounded domain in $\R^3$. In our proofs, we need the following theorems concerning the {\em non-homogeneous Stokes problem}: given $(g,f,h,v_0)$ find $(v,p)$ satisfying the following initial-boundary value problem
\begin{equation}\label{eq:n-S}
\begin{aligned}
&\div v=g &&\text{in }D\times(0,T)
\\
&\rho_{_L}\frac{\partial v}{\partial t}-\div \mathcal T(v,p)
=f&&\text{in }D\times(0,T),
\\
&\mathcal T(v,p)\cdot \n=d&&\text{on }\partial D\times(0,T),
\\
&v(\cdot,0)=v_0&&\text{in }D.
\end{aligned}
\end{equation}
In the above equation, $\rho_{_L}$ is positive constant, $\mathcal T$ is the Cauchy stress tensor for viscous incompressible fluids defined in \eqref{eq:cauchy}, and $\n$ denotes the unit outward normal to $\partial D$.

\begin{theorem}\label{th:nStokes}
Let $D$ be a smooth\footnote{The regularity of the boundary could be further lowered, see \cite[Theorem 2 \& comments in Section 5]{solo88}. } domain in $\R^3$ and $T>0$. Consider 
\begin{equation}\label{eq:data_n-S}
\begin{split}
&g\in L^2(0,T;H^1(D))\cap H^1(0,T;H^{-1}(D)),
\\
&f\in L^2(D\times(0,T)),
\\
&d\in L^2(0,T;H^{1/2}(\partial D))\cap H^{1/4}(0,T;L^2(\partial D)),
\\
&v_0\in H^1(D),
\end{split}
\end{equation}
satisfying the {\em compatibility condition}
\[
\div v_0=g(\cdot,0)\qquad \text{in }D.
\]
Then, there exists a unique solution $(v,p)$ to \eqref{eq:n-S} with 
\[\begin{split}
&v\in \mathcal V_1:= L^2(0,T;H^2(D))\cap H^1(0,T;L^2(D)) \cap C([0,T];H^1(D)),
\\
&p\in \Pi_1:= L^2(0,T;H^1(D))\cap H^{1/4}(0,T;L^2(\partial D)). 
\end{split}\]
In addition, $(v,p)$ satisfies the following estimate
\begin{equation}\label{eq:estimate_n-S}
\begin{split}
\norm{v}_{{\mathcal V}_1}+\norm{p}_{\Pi_1}\le C_T&\left(\norm{v_0}_{H^1(D)}+\norm{f}_{L^2(D\times(0,T))}\right.
\\
&\quad +\norm{g}_{L^2(0,T;H^1(D))}+\norm{\frac{\partial g}{\partial t}}_{L^2(0,T;H^{-1}(D))}
\\
&\quad \left.+\norm{d}_{ L^2(0,T;H^{1/2}(\partial D))}+\norm{d}_{H^{1/4}(0,T;L^2(\partial D))}\right),
\end{split}\end{equation}
where $C_T$ is a positive constant depending on $T$ (and non-decreasing with $T$), and the norms on the left-hand side are defined as follows 
\begin{equation}\label{eq:normV1}
\norm{v}_{{\mathcal V}_1}:=\norm{v}_{L^2(0,T;H^2(D))}+\norm{\frac{\partial v}{\partial t}}_{L^2(0,T;L^2(D))}+\norm{v}_{C([0,T];H^1(D))},
\end{equation}
\begin{equation}\label{eq:normPi1}
\norm{p}_{\Pi_1}:=\norm{p}_{L^2(0,T;H^1(D))}+\norm{p}_{H^{1/4}(0,T;L^2(\partial D))}.
\end{equation}
\end{theorem}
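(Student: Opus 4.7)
The plan is to reduce \eqref{eq:n-S} to a Stokes problem with homogeneous data via successive lifts, apply maximal $L^2$-regularity for the resulting system, and then recover the pressure trace regularity by elliptic theory. I would follow the strategy of Solonnikov and first produce an auxiliary field $w_0 \in \mathcal V_1$ that carries the initial condition and the divergence source: using \cite[Theorem III.3.1]{Ga} on each time slice, one can solve $\div w_0(\cdot,t) = g(\cdot,t)$ in $D$ with a right-inverse continuous in $H^1(D)$ (after subtracting the boundary flux, which is handled by a standard harmonic extension of $d\cdot \n$) and with time regularity inherited from $g$ via the estimate $\|w_0\|_{H^1(0,T;L^2(D))} \lesssim \|\partial_t g\|_{L^2(0,T;H^{-1}(D))}$. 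The function $w_0$ also has to match $v_0$ at $t=0$; this uses the compatibility $\div v_0 = g(\cdot,0)$ and a fixed lift of $v_0 - w_0(\cdot,0) \in H^1_0(D,\mathrm{div}=0)$ via a time cutoff so that the difference $\tilde v := v - w_0$ has $\div \tilde v = 0$ and $\tilde v(\cdot,0)=0$.

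Next I would rewrite the equation for $(\tilde v, p)$ as a standard Stokes system with modified source $\tilde f \in L^2(D\times(0,T))$ and modified Neumann datum $\tilde d \in L^2(0,T;H^{1/2}(\partial D)) \cap H^{1/4}(0,T;L^2(\partial D))$. To handle the nonzero traction boundary condition, I would split $p = p_1 + p_2$ where $p_1$ absorbs the normal component of $\tilde d$, solving an elliptic Neumann problem $\Delta p_1 = 0$ in $D$ with $p_1|_{\partial D}$ chosen so that $-p_1 \n + 2\mu D(\tilde v) \cdot \n - \tilde d\cdot \n = 0$; this lift uses harmonic extension from $H^{1/2}(\partial D)$ to $H^1(D)$ and a separate parabolic lift in $H^{1/4}(0,T;L^2(\partial D))$. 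The remainder satisfies a Stokes problem with zero initial data, zero divergence, and a tangential Neumann datum, which is treated by maximal regularity — I would invoke the standard $L^2$ Stokes semigroup theory on bounded domains (either via Galerkin approximation in the space $\{\phi \in H^1(D): \div \phi = 0\}$ coupled with the Lions-Magenes trace machinery, or directly via the Fujita-Kato–Solonnikov estimates). This delivers $\tilde v \in \mathcal V_1$ and $p_2 \in L^2(0,T;H^1(D))$ with the estimate \eqref{eq:estimate_n-S} for every term except the $H^{1/4}(0,T;L^2(\partial D))$ norm of $p$.

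The hard part, and the main obstacle, will be the sharp pressure trace estimate $p \in H^{1/4}(0,T;L^2(\partial D))$. Starting from the momentum equation $\rho_L \partial_t \tilde v - \mu \Delta \tilde v + \nabla p = \tilde f$ together with $\div \tilde v = 0$, the pressure solves an elliptic Neumann problem whose boundary data involve $\partial_t \tilde v \cdot \n$ and the tangential parts of $\tilde d$. The bound on $\partial_t \tilde v$ in $L^2(0,T;L^2(D))$ gives, after integration by parts against a suitable test function and use of the trace theorem on $H^{1/2}(D)$, control of $p$ in an anisotropic Sobolev space; the precise exponent $1/4$ arises from the interpolation $H^{1/4}(0,T;L^2(\partial D)) = [L^2(0,T;H^{1/2}(\partial D)), H^{1/2}(0,T;L^2(\partial D))]_{1/2}$ combined with the trace theorem $H^{3/4,3/4}(D\times(0,T))\hookrightarrow H^{1/4}(0,T;L^2(\partial D))$ applied to $p$. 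I would carry this out along the lines of \cite{solo88}, adapting the duality argument that tests the Stokes equation against solutions of a backward auxiliary Stokes problem with boundary data in the dual of $H^{1/4}(0,T;L^2(\partial D))$.

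Uniqueness is straightforward: the difference of two solutions satisfies \eqref{eq:n-S} with zero data, and the standard energy identity (multiplying by the difference of velocities and integrating by parts using the zero traction boundary condition) yields $v \equiv 0$ and hence $\nabla p \equiv 0$; since the pressure is determined up to a constant only if $g=0$ and $d\cdot\n=0$ were both present, here the Neumann condition on the stress tensor pins down $p$ uniquely in $L^2$.
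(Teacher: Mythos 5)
Your overall plan (lift the divergence, reduce to a traction Stokes problem, apply maximal $L^2$-regularity) has the same skeleton as the paper's proof, but two of the key steps do not go through as described.

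First, the divergence lift. You propose to use \cite[Theorem III.3.1]{Ga} (Bogovski\u{\i}) slice by slice to solve $\div w_0(\cdot,t)=g(\cdot,t)$. That operator produces $w_0\in H^1_0(D)$ and therefore requires the compatibility $\int_D g(\cdot,t)\,dx=0$, which is not part of the hypotheses here. Your remedy --- ``subtracting the boundary flux, which is handled by a standard harmonic extension of $d\cdot\n$'' --- does not make sense: $d$ is the prescribed \emph{traction} $\mathcal T(v,p)\cdot\n$, not a normal velocity flux, and the problem \eqref{eq:n-S} imposes no condition on $v\cdot\n$, so there is no ``boundary flux'' of $g$ to be canceled by $d\cdot\n$. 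The paper sidesteps this entirely by taking the Poisson--gradient lift $v=w+\nabla z$ with $\Delta z = g$, $z|_{\partial D}=0$, which has no mean-value constraint and transfers the time regularity of $g$ to $z$ by direct elliptic estimates. (Incidentally, you also write ``elliptic Neumann problem $\Delta p_1=0$ with $p_1|_{\partial D}$ chosen so that \dots''; prescribing $p_1$ on the boundary is a Dirichlet condition, not Neumann.)

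Second, the pressure trace regularity $p\in H^{1/4}(0,T;L^2(\partial D))$. You correctly flag this as the hard part, but the sketch does not deliver it. You invoke the interpolation identity $H^{1/4}(0,T;L^2(\partial D)) = [L^2(0,T;H^{1/2}(\partial D)), H^{1/2}(0,T;L^2(\partial D))]_{1/2}$ and the trace embedding $H^{3/4,3/4}(D\times(0,T))\hookrightarrow H^{1/4}(0,T;L^2(\partial D))$, but you never establish the endpoints: neither $p\in H^{1/2}(0,T;L^2(\partial D))$ nor $p\in H^{3/4}(0,T;L^2(D))$ follows from the regularity at hand ($p\in L^2(0,T;H^1(D))$, $\partial_t v\in L^2(D\times(0,T))$, and the momentum equation), and merely gesturing at a ``duality argument along the lines of Solonnikov'' leaves the essential step unproved. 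The paper does not attempt to re-derive this from scratch: after the Poisson lift, it invokes the Grubb--Solonnikov maximal-regularity theorem for the traction Stokes problem (\cite[Theorem 7.5 with $r=0$]{grubb_solo}), which already yields the sharp anisotropic pressure trace $p\in L^2(0,T;H^1(D))\cap H^{1/4}(0,T;L^2(\partial D))$ together with $v\in\mathcal V_1$ in one stroke. If you wanted to avoid citing that theorem, the duality argument would need to be carried out in full; as written, the proposal leaves the crucial estimate unjustified.

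Uniqueness as you state it is fine, modulo the observation that the Neumann condition on $\mathcal T(v,p)\cdot\n$ indeed fixes the additive constant in $p$.
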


\begin{proof}
A proof of the corresponding result in the $L^q$-setting, with summability exponent $q>3$ (in our case $q=2$), can be traced back to the works \cite[Sections 2 \& 3]{solo78}, \cite[Theorem 2 \& remarks at the end of page 402]{solo88}, \cite[Theorem 1]{mucha00}). The same result for the problem where $\partial D=\Gamma_c\cup \Gamma_f$, with $\Gamma_c\cap\Gamma_f=\emptyset$, condition \eqref{eq:n-S}$_3$ on $\Gamma_c$ and $v=0$ on $\Gamma_f$, has been extensively studied in \cite[Chapter 1]{kukavica18}. As this is one of the crucial results used in the current paper, we would like to provide its proof here. 

The unique solution to \eqref{eq:n-S} can be written in the form $v:=(w+\nabla z,p)$ where the scalar field $z$ solves the Poisson equation 
\begin{equation}\label{eq:poisson}
\begin{aligned}
\Delta z&=g &&\text{in }D\times(0,T),
\\
z&=0 &&\text{on }\partial D\times(0,T);
\end{aligned}
\end{equation}
whereas $(v,p)$ satisfies the Stokes equation
\begin{equation}\label{eq:stokes}
\begin{aligned}
&\div w=0&&\text{in }D\times(0,T),
\\
&\rho_{_L}\frac{\partial w}{\partial t}-\div \mathcal T(w,p)
=f-\rho_{_L}\frac{\partial\nabla z}{\partial t}+2\mu\div(D(\nabla z))=:\tilde f&&\text{in }D\times(0,T),
\\
&\mathcal T(w,p)\cdot \n=d-2\mu D(\nabla z)\cdot \n=:\tilde d &&\text{on }\partial D\times(0,T),
\\
&w(x,0)=v_0(x)-\nabla z(x,0) =:w_0(x) &&\text{on }D.
\end{aligned}
\end{equation}
Standard elliptic estimates applied to \eqref{eq:poisson}, with $g$ satisfying \eqref{eq:data_n-S}, imply the existence of a solution $z\in L^2(0,T;H^3(D))\cap H^1(0,T;H^1(D))$ satisfying \eqref{eq:poisson}$_1$ a.e. in $D\times(0,T)$ and the boundary condition in the trace sense. In addition, the following estimate holds
\begin{equation}\label{eq:estimate_poisson}
\begin{split}
\norm{z}_{L^2(0,T;H^3(D))}+&\norm{\frac{\partial z}{\partial t}}_{L^2(0,T;H^1(D))} 
\\
&\le k_1\left(\norm{g}_{L^2(0,T;H^1(D))}+\norm{\frac{\partial g}{\partial t}}_{L^2(0,T;H^{-1}(D))}\right),
\end{split}
\end{equation} 
with a positive constant $k_1$ independent of $T$. We further note that, by the compatibility condition, $g(\cdot,0)=\div v_0\in L^2(D)$. Hence, \eqref{eq:poisson} is satisfied in particular at time $t=0$. Invoking again standard elliptic regularity, we have that 
\[
\norm{z(\cdot,0)}_{H^2(D)}\le k_2\norm{v_0}_{H^1(D)},
\]
where $k_2$ is a positive constant. As a consequence, $w_0=v_0-\nabla z(\cdot,0)\in H^1(D)$. Moreover, by Lemma \ref{lem:interpolation} part $(I1)$, we have that $z\in H^\theta(0,T;H^m(D))$ for all $\theta\in [0,1]$ and $m=3-2\theta$. This fact with the (space-)trace theorem implies that
\[
\nabla(\nabla z)\in L^2(0,T;H^{1/2}(\partial D))\cap H^{1/4}(0,T;L^2(\partial D)),
\]
and 
\begin{equation}\label{eq:nablazCH1}
\begin{split}
\norm{\nabla(\nabla z)}_{L^2(0,T;H^{1/2}(\partial D))}&\le k_3\norm{z}_{L^2(0,T;H^{3}(D))},
\\
\norm{\nabla(\nabla z)}_{H^{1/4}(0,T;L^2(\partial D))}&\le k_4\left(
\norm{z}_{L^2(0,T;H^3(D))}+\norm{\frac{\partial z}{\partial t}}_{L^2(0,T;H^1(D))}\right),
\end{split}\end{equation}
for some positive constants $k_3$ and $k_4$, both independent of $T$. 

From the above considerations, we obtain that 
\[\begin{split}
&\tilde f\in L^2(D\times(0,T)),
\\
&\tilde d\in L^2(0,T;H^{1/2}(\partial D))\cap H^{1/4}(0,T;L^2(\partial D)),
\end{split}\]
with 
\begin{equation}\label{eq:estimate_tildeftilded}\begin{split}
&\norm{\tilde f}_{L^2(D\times(0,T))}\le \norm{f}_{L^2(D\times(0,T))}+k_5\left(\norm{g}_{L^2(0,T;H^1(D))}
%\right.
%\\
%&\qquad\qquad\qquad\qquad\qquad\qquad\qquad\qquad\qquad\qquad\qquad \left.
+\norm{\frac{\partial g}{\partial t}}_{L^2(0,T;H^{-1}(D))}\right),
\\
&\norm{\tilde d}_{L^2(0,T;H^{1/2}(\partial D))}+\norm{\tilde d}_{H^{1/4}(0,T;L^2(\partial D))}
\\
&\qquad \qquad \le \norm{d}_{L^2(0,T;H^{1/2}(\partial D))}+\norm{d}_{H^{1/4}(0,T;L^2(\partial D))}
\\
&\quad \qquad\qquad \qquad+2\mu\left(\norm{\nabla(\nabla z)}_{L^2(0,T;H^{1/2}(\partial D))}+\norm{\nabla(\nabla z)}_{H^{1/4}(0,T;L^2(\partial D))}\right)
\\
&\qquad \qquad\le \norm{d}_{L^2(0,T;H^{1/2}(\partial D))}+\norm{d}_{H^{1/4}(0,T;L^2(\partial D))}
\\
&\quad \qquad\qquad \qquad\qquad \qquad \quad + k_6\left(\norm{g}_{L^2(0,T;H^1(D))}+\norm{\frac{\partial g}{\partial t}}_{L^2(0,T;H^{-1}(D))}\right). 
\end{split}\end{equation}

For the solvability of the Stokes problem \eqref{eq:stokes}, we then use \cite[Theorem 7.5 (with $r=0$)]{grubb_solo}, to conclude that there exists a unique solution $(w,p)$ satisfying the estimates
\begin{equation}\label{eq:estimate_stokes}
\begin{split}
\norm{w}_{{\mathcal V}_1}+\norm{p}_{\Pi_1}\le k_T&\left(\norm{v_0}_{H^1(D)}+\norm{\tilde f}_{L^2(D\times(0,T))}\right.
\\
&\quad \left.+\norm{\tilde d}_{ L^2(0,T;H^{1/2}(\partial D))}+\norm{\tilde d}_{H^{1/4}(0,T;L^2(\partial D))}\right)
\end{split}\end{equation}
for some positive constant $k_T$ (depending on $T$, and non-decreasing with $T$). 
The combination of \eqref{eq:estimate_poisson} and \eqref{eq:nablazCH1} together with \eqref{eq:estimate_stokes} and \eqref{eq:estimate_tildeftilded} gives \eqref{eq:estimate_n-S}. 
\end{proof}

If the forcing terms in \eqref{eq:n-S} allow for time differentiation, we can apply the above theorem to the equations obtained from \eqref{eq:n-S} after differentiating each equation with respect to time, thus obtaining more regular solutions to \eqref{eq:n-S} (see \cite[Theorem 1.4.5]{kukavica18}. 
\begin{theorem}\label{th:nStokes_r}
Let $D$ be a smooth domain in $\R^3$, and take $T>0$. Consider 
\begin{equation}\label{eq:data_n-S2}
\begin{split}
&g\in L^2(0,T;H^2(D))\cap H^1(0,T;H^1(D))\cap H^2(0,T;H^{-1}(D)),
\\
&f\in L^2(0,T;H^1(D))\cap H^1(0,T;L^2(D)), 
\\
&d\in L^2(0,T;H^{1+1/2}(\partial D))\cap H^1(0,T;H^{1/2}(\partial D))\cap H^{1+1/4}(0,T;L^2(\partial D)),
\\
&v_0\in H^{5/2}(D),
\end{split}
\end{equation}
satisfying the following {\em compatibility conditions}
\begin{itemize}
\item[(C1)] $\div v_0=g(\cdot,0)$ in $D$. 
\item[(C2)] There exist $v_1\in H^{1}(D)$ and $p_0\in H^{3/2}(D)$ such that
\begin{equation}\label{eq:compatibility_Stokes}
\begin{aligned}
&\div v_1=\frac{\partial g}{\partial t}(\cdot,0)&&\text{in }D,
\\
&\rho_{_L}v_1=\mu\Delta v_0-\nabla p_0+f(\cdot,0) &&\text{in }D,  
\\
&p_0=\left[2\mu D(v_0)\cdot \n-d(\cdot, 0)\right]\cdot\n&&\text{on }\partial D.
\end{aligned}
\end{equation}
\end{itemize}
Then, there exists a unique solution $(v,p)$ to \eqref{eq:n-S} satisfying 
\begin{equation}\label{eq:spaces2}
\begin{split}
&v\in \mathcal V_2:=L^2(0,T;H^3(D))\cap H^1(0,T;H^2(D)) \cap H^2(0,T;L^2(D))
\\
&p\in \Pi_2:=L^2(0,T;H^2(D))\cap H^1(0,T;H^1(D))\cap H^{1+1/4}(0,T;L^2(\partial D)). 
\end{split}\end{equation}
In addition, $(v,p)$ satisfies the following estimate
\begin{equation}\label{eq:estimate_n-S}
\begin{split}
\norm{v}_{{\mathcal V}_2}+\norm{p}_{\Pi_2}\le C_T&\left(\norm{v_0}_{H^1(D)}+\norm{v_1}_{H^1(D)}+\norm{g}_{L^2(0,T;H^2(D))}\right.
\\
&\quad +\norm{\frac{\partial g}{\partial t}}_{L^2(0,T;H^{1}(D))}+\norm{\frac{\partial^2 g}{\partial t^2}}_{L^2(0,T;H^{-1}(D))}
\\
&\quad +\norm{f}_{L^2(0,T;H^1(D))}+\norm{\frac{\partial f}{\partial t}}_{L^2((0,T)\times D)}
\\
&\quad +\norm{d}_{ L^2(0,T;H^{1+1/2}(\partial D))}+\norm{\frac{\partial d}{\partial t}}_{ L^2(0,T;H^{1/2}(\partial D))}
\\
&\quad \left.+\norm{d}_{H^{1/4}(0,T;L^2(\partial D))}+\norm{\frac{\partial d}{\partial t}}_{H^{1/4}(0,T;L^2(\partial D))}\right),
\end{split}\end{equation}
where $C_T$ is a positive constant depending on $T$ (and non-decreasing with $T$), and the norms on the left-hand side are defined in \eqref{eq:normV2} and \eqref{eq:normPi2}, with $\Omega_L$ now replaced by $D$. 
%as follows 
%\begin{equation}\label{eq:normV2}
%\norm{v}_{{\mathcal V}_2}:=\norm{v}_{L^2(0,T;H^3(D))}+\norm{\frac{\partial v}{\partial t}}_{L^2(0,T;H^2(D))}+\norm{\frac{\partial^2 v}{\partial t^2}}_{L^2(0,T;L^2(D))}
%\end{equation}
%and 
%\begin{equation}\label{eq:normPi2}
%\norm{p}_{\Pi_2}:=\norm{p}_{L^2(0,T;H^2(D))}+\norm{\frac{\partial p}{\partial t}}_{L^2(0,T;H^1(D))}+\norm{\frac{\partial p}{\partial t}}_{H^{1/4}(0,T;L^2(\partial D))}. 
%\end{equation}
\end{theorem}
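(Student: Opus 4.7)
The plan is to \emph{bootstrap} Theorem \ref{th:nStokes} by differentiating the Stokes system in time, and then recover the missing spatial regularity from the stationary Stokes estimates applied pointwise in $t$. The compatibility condition (C2) has been tailored precisely so that the time-derivatives $(\partial_t v,\partial_t p)$ solve a non-homogeneous Stokes system whose data and initial value fall in the framework of Theorem \ref{th:nStokes}.

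\textbf{Step 1 (baseline regularity).} First I would apply Theorem \ref{th:nStokes} directly to $(v,p)$ with data $(g,f,d,v_0)$; these satisfy the hypotheses there by the (stronger) hypotheses \eqref{eq:data_n-S2} and (C1). This produces a unique solution with $v\in \mathcal V_1$, $p\in \Pi_1$, and the baseline estimate \eqref{eq:estimate_n-S}. This fixes a candidate $(v,p)$; the task is then to upgrade its regularity to \eqref{eq:spaces2}.

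\textbf{Step 2 (time-differentiated system).} Formally differentiating \eqref{eq:n-S} in $t$, I would consider the problem for $(w,q):=(\partial_t v,\partial_t p)$:
\begin{equation*}
\begin{aligned}
\div w&=\partial_t g &&\text{in }D\times(0,T),\\
\rho_{_L}\partial_t w-\div\mathcal T(w,q)&=\partial_t f &&\text{in }D\times(0,T),\\
\mathcal T(w,q)\cdot\n&=\partial_t d &&\text{on }\partial D\times(0,T),\\
w(\cdot,0)&=v_1 &&\text{in }D.
\end{aligned}
\end{equation*}
The right-hand-side data lie in the spaces required by Theorem \ref{th:nStokes}: $\partial_t g\in L^2(0,T;H^1)\cap H^1(0,T;H^{-1})$, $\partial_t f\in L^2(D\times(0,T))$, $\partial_t d\in L^2(0,T;H^{1/2}(\partial D))\cap H^{1/4}(0,T;L^2(\partial D))$, and $v_1\in H^1(D)$. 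The compatibility $\div v_1=\partial_t g(\cdot,0)$ is \eqref{eq:compatibility_Stokes}$_1$. Theorem \ref{th:nStokes} then yields a unique $(\tilde w,\tilde q)\in\mathcal V_1\times\Pi_1$. To identify $(\tilde w,\tilde q)$ with $(\partial_t v,\partial_t p)$ I would integrate the momentum equation from $0$ to $t$ and use \eqref{eq:compatibility_Stokes}$_{2,3}$ to check that $v_0+\int_0^t\tilde w\,ds$ and $p_0+\int_0^t\tilde q\,ds$ solve \eqref{eq:n-S} with the same data, whence uniqueness from Step 1 forces $\tilde w=\partial_t v$, $\tilde q=\partial_t p$. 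This step delivers
\[
\partial_t v\in L^2(0,T;H^2(D))\cap H^1(0,T;L^2(D))\cap C([0,T];H^1(D)),
\]
\[
\partial_t p\in L^2(0,T;H^1(D))\cap H^{1/4}(0,T;L^2(\partial D)),
\]
together with the corresponding estimates in terms of $\partial_t g$, $\partial_t f$, $\partial_t d$, and $v_1$.

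\textbf{Step 3 (spatial upgrade by stationary Stokes).} It remains to show $v\in L^2(0,T;H^3(D))$ and $p\in L^2(0,T;H^2(D))$. For a.e.\ $t\in(0,T)$, $(v(\cdot,t),p(\cdot,t))$ solves the stationary Stokes system
\begin{equation*}
\begin{aligned}
-\mu\Delta v+\nabla p&=f-\rho_{_L}\partial_t v && \text{in }D,\\
\div v&=g && \text{in }D,\\
\mathcal T(v,p)\cdot\n&=d && \text{on }\partial D.
\end{aligned}
\end{equation*}
The right-hand side of the momentum equation lies in $H^1(D)$ (by the hypothesis on $f$ and by Step 2), $g(\cdot,t)\in H^2(D)$, and $d(\cdot,t)\in H^{3/2}(\partial D)$. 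Classical elliptic regularity for the stationary Stokes problem with mixed Neumann-type boundary data (see, e.g., \cite[Ch.\ IV]{Ga}) then gives $v(\cdot,t)\in H^3(D)$, $p(\cdot,t)\in H^2(D)$ with the estimate
\[
\norm{v(\cdot,t)}_{H^3(D)}+\norm{p(\cdot,t)}_{H^2(D)}\le C\bigl(\norm{f-\rho_{_L}\partial_t v}_{H^1(D)}+\norm{g}_{H^2(D)}+\norm{d}_{H^{3/2}(\partial D)}\bigr).
\]
Squaring and integrating in $t\in(0,T)$ produces the desired $L^2(0,T;H^3)\times L^2(0,T;H^2)$ bounds, controlled by the right-hand side of \eqref{eq:estimate_n-S}.

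\textbf{Step 4 (assembly).} Combining the bounds from Steps 1--3 yields $v\in\mathcal V_2$, $p\in\Pi_2$ and \eqref{eq:estimate_n-S}. Uniqueness follows from the uniqueness already established at the $\mathcal V_1\times\Pi_1$ level in Step 1. The principal subtlety I expect is the identification in Step 2: verifying rigorously that the $(\tilde w,\tilde q)$ produced by Theorem \ref{th:nStokes} coincides with $(\partial_t v,\partial_t p)$ of the baseline solution, which is exactly where the compatibility relations \eqref{eq:compatibility_Stokes}$_{2,3}$ (providing a consistent $p(\cdot,0)=p_0$ and $\partial_t v(\cdot,0)=v_1$) are indispensable. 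Everything else reduces to tracking constants through the linear estimates, and since all the estimates in Theorems \ref{th:nStokes} and the stationary Stokes estimate have constants non-decreasing in $T$, so does $C_T$ in \eqref{eq:estimate_n-S}.
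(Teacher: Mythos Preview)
Your proposal is correct and follows the same route the paper indicates: the paper does not give a detailed proof of this theorem but simply remarks (just before the statement) that one applies Theorem \ref{th:nStokes} to the system obtained by differentiating \eqref{eq:n-S} in time, citing \cite[Theorem 1.4.5]{kukavica18}. Your Steps 1--4 spell this out in full, including the stationary Stokes recovery (Step 3) and the identification argument in Step 2, both of which the paper leaves implicit.
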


\begin{remark}\label{rm:trace-regularity-v}
We observe that under the regularity stated in the above theorem, by Lemma \ref{lem:interpolation} and Sobolev embedding, we get 
\begin{align*}
v\in L^2(0,T;H^3(D))\cap H^1(0,T;H^2(D))&\hookrightarrow H^\theta(0,T;H^{3-\theta}(D)) &&\text{for }\theta\in [0,1],
\\
\frac{\partial v}{\partial t}\in L^2(0,T;H^2(D))\cap H^1(0,T;L^2(D))&\hookrightarrow H^{\theta}(0,T;H^{2-2\theta}(D)) &&\text{for }\theta\in [0,1],
\\
p\in L^2(0,T;H^2(D))\cap H^1(0,T;H^1(D))&\hookrightarrow H^\theta(0,T;H^{2-\theta}(D))&&\text{for }\theta\in [0,1].
\end{align*}

By the (space-)trace theorem, we also have that 
\[
\norm{v|_{\partial D}}_{H^\theta(0,T;H^{5/2-\theta}(\partial D))}+\norm{v|_{\partial D}}_{H^{s+1}(0,T;H^{3/2-2s}(\partial D))}\le C\norm{v}_{\mathcal V_2}
\]
for $\theta\in [0,1]$ and $s\in [0,3/4)$, where the positive constant $C$ is independent of $T$. In addition, 
\[
v|_{\partial D}\in L^2(0,T;H^{5/2}(\partial D))\cap H^{3/2}(0,T;H^{1/2}(\partial D)).
\]
In view of our next discussion, consider the function $h:(x,t)\mapsto\displaystyle \int^t_0v|_{\partial D}(x,s)ds$. Then, $h\in H^{5/2,5/2}(\partial D\times(0,T))$.
\end{remark}

Let $A$ be a smooth bounded domain in $\R^3$ with boundary $\partial A=\Gamma_0\cup \Gamma_1$ satisfying $ \Gamma_0\ne \emptyset$, $\Gamma_1\ne \emptyset$ and $\Gamma_0\cap \Gamma_1\ne \emptyset$. For our proofs, we also need the following result concerning the {\em Navier equations of linearized elasticity}: given $(h,u_0,u_1)$ find a solution $u$ to the following initial-boundary value problem
\begin{equation}\label{eq:N}
\begin{aligned}
&\rho_B\frac{\partial^2 u}{\partial t^2}=\div P(u) &&\text{in }A\times(0,T),
\\
&P(u)\cdot \n=0 &&\text{on }\Gamma_0\times(0,T),
\\
&\frac{\partial u}{\partial t}=w &&\text{on }\Gamma_1\times(0,T),
\\
&u(\cdot,0)=u_0 &&\text{in }A,
\\
&\frac{\partial u}{\partial t}(\cdot,0)=u_1 &&\text{in }A,
\end{aligned}
\end{equation}
where $\rho_B$ is a positive constant, $P$ is the first Piola-Kirchhoff stress and it is defined in \eqref{eq:piola}. The following theorem holds. 
\begin{theorem}\label{th:N}
Suppose that the data  
\[\begin{split}
(u_0,u_1)&\in H^{3}(A)\times H^{3/2}(A)
\\
w&\in  L^2(0,T;H^{5/2}(\Gamma_1))\cap H^{3/2}(0,T;H^{1/2}(\Gamma_1)) 
\end{split}\] 
satisfy the {\em compatibility conditions} 
\begin{equation}\label{eq:compatibility_N}\begin{aligned}
&P(u_0)\cdot \n=0 &&\text{on }\Gamma_0\times(0,T),
\\
&u_1=w &&\text{on }\Gamma_1\times[0,T),
\\
&\frac{\partial w}{\partial t}(\cdot,0)=\frac{1}{\rho_B}\div P(u_0)&&\text{on }\Gamma_1.
\end{aligned}\end{equation}
Then, there exists a unique solution $u$ to \eqref{eq:N} satisfying the following regularity properties 
\[\begin{split}
u&\in C([0,T]; H^{5/2}(A)),
\\
\frac{\partial u}{\partial t}&\in C([0,T]; H^{3/2}(A)),
\\
\frac{\partial^2 u}{\partial t^2}&\in C([0,T]; H^{1/2}(A)),
\\
P(u)\cdot \n&\in H^{3/2,3/2}(\Gamma_1\times(0,T))\cap H^1(0,T;H^{1/2}(\Gamma_1)),
\end{split}\]
and the estimates
\begin{equation}\label{eq:estimate-N}
\begin{split}
&\norm{u}_{\mathcal U}\le C\left[(1+T^{1/2})\norm{u_0}_{H^{3}(A)}+\norm{u_1}_{H^{3/2}(A)}\right.
\\
&\qquad\qquad\qquad\qquad\qquad\quad\left.+T\norm{w}_{L^2(0,T;H^{5/2}(\Gamma_1))}+\norm{w}_{H^{3/2}(0,T;L^2(\Gamma_1))}\right],
\\
&\norm{P(u)\cdot \n}_{H^{3/2,3/2}(\Gamma_1\times(0,T))}\le C\left[T^{1/2}\norm{u_0}_{H^{3}(A)}+T\norm{w}_{L^2(0,T;H^{5/2}(\Gamma_1))}\right.
\\
&\qquad\qquad\qquad\qquad\qquad\qquad\qquad\qquad\qquad\qquad\qquad\quad\left.+\norm{w}_{H^{1/2}(0,T;H^1(\Gamma_1))}\right],
\\
&\norm{P(u)\cdot \n}_{H^1(0,T;H^{1/2}(\Gamma_1))} \le C\left[T^{1/2}\norm{u}_{\mathcal U}+ \norm{w}_{L^2(0,T;H^{3/2}(\Gamma_1))}\right], 
\end{split}
\end{equation}
for some positive constant $C$ independent of $T$. In the above estimate, the norm $\norm{\cdot}_{\mathcal U}$ is defined in \eqref{eq:normU} with $\Omega_B$ and $\Gamma_L$ now replaced by $A$ and $\Gamma_1$, respectively. 
%as follows 
%\begin{multline}\label{eq:normU}
%\norm{u}_{\mathcal U}:=\norm{u}_{C([0,T];H^{5/2}(A))}+\norm{\frac{\partial u}{\partial t}}_{C([0,T]; H^{3/2}(A))}+\norm{\frac{\partial^2 u}{\partial t^2}}_{C([0,T]; H^{1/2}(A))}.
%\\
%+\norm{P(u)\cdot \n}_{H^{3/2,3/2}(\partial A\times(0,T))}.
%\end{multline}
\end{theorem}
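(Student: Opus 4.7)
The plan is to solve \eqref{eq:N} in three stages: reduction to homogeneous boundary data on $\Gamma_1$, existence/regularity for the reduced problem via semigroup and energy methods, and recovery of the hidden boundary regularity of the normal stress via multiplier identities.

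\textbf{Step 1 (Lifting of the boundary data).} Integrating \eqref{eq:N}$_3$ in time gives the displacement trace $h(\cdot,t):=u_0|_{\Gamma_1}+\int_0^t w(\cdot,s)\,ds$ on $\Gamma_1$. From the regularity of $w$ and $u_0$, together with the compatibility conditions \eqref{eq:compatibility_N}, $h$ belongs to $H^{5/2,5/2}(\Gamma_1\times(0,T))$ with $h(\cdot,0)=u_0|_{\Gamma_1}\in H^{5/2}(\Gamma_1)$. Using the right-inverse of the trace (e.g. \cite[Ch.~1, Thm.~8.3]{lionsmagenesI}) I lift $h$ to a function $U_h\in L^2(0,T;H^3(A))\cap H^{3/2}(0,T;H^{3/2}(A))\cap H^{5/2}(0,T;L^2(A))$ vanishing in a neighborhood of $\Gamma_0$ and equal to $h$ on $\Gamma_1$, with the lifting estimate independent of $T$. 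Setting $U:=u-U_h$ reduces \eqref{eq:N} to the same system for $U$ with modified forcing $f:=-\rho_B\partial_t^2 U_h+\div P(U_h)$, traction-free condition on $\Gamma_0$, \emph{homogeneous} Dirichlet condition on $\Gamma_1$, and initial data $U_0:=u_0-U_h(\cdot,0)\in H^3(A)$, $U_1:=u_1-\partial_t U_h(\cdot,0)\in H^{3/2}(A)$. The compatibility conditions pass to this reduced system.

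\textbf{Step 2 (Solution of the reduced system).} The operator $-\rho_B^{-1}\div P(\cdot)$ on $A$ with mixed Neumann/Dirichlet boundary conditions on $\Gamma_0\cup\Gamma_1$ is self-adjoint and positive on its natural $L^2$-based domain, so the reduced system is a well-posed abstract second order hyperbolic equation. Differentiating formally in time, I work at three energy levels: $(U,\partial_t U)$, $(\partial_t U,\partial_t^2 U)$, and $(\partial_t^2 U,\partial_t^3 U)$. The energy identity \eqref{eq:energy}-type computation, together with elliptic regularity for the Lamé system with traction-free/Dirichlet boundary data, gives $U\in C([0,T];H^{5/2}(A))$, $\partial_t U\in C([0,T];H^{3/2}(A))$, $\partial_t^2 U\in C([0,T];H^{1/2}(A))$, and the first estimate of \eqref{eq:estimate-N} for $U$, with $T$-dependence tracked explicitly via Grönwall (the factors $T^{1/2}$ and $T$ arise when the boundary datum $w$ and the lifting $U_h$ are estimated in $L^2(0,T;\cdot)$ rather than in the supremum norm). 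Adding back $U_h$ restores $u$ with the claimed regularity, and the estimates on $U_h$ yield the full first inequality in \eqref{eq:estimate-N}.

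\textbf{Step 3 (Hidden regularity of $P(u)\cdot n$).} The direction $P(u)\cdot n\in L^2(0,T;H^{3/2}(\Gamma_1))$ follows from $u\in L^2(0,T;H^3(A))$ and the trace theorem, and $P(u)\cdot n\in H^1(0,T;H^{1/2}(\Gamma_1))$ follows from $\partial_t u\in C([0,T];H^{3/2}(A))\cap L^2(0,T;H^2(A))$ combined with Lemma \ref{lem:interpolation}; these yield the third inequality of \eqref{eq:estimate-N}. The nontrivial part, namely $P(u)\cdot n\in H^{3/2}(0,T;L^2(\Gamma_1))$, is the hidden regularity and does \emph{not} follow from a space-trace argument applied to $\partial_t^{3/2}u$ (which is only in $L^2(A)$). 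The main obstacle is precisely this estimate, and I will obtain it by the multiplier technique of \cite{LLT}: testing the equation for $w:=\partial_t u$ against a cutoff extension of a suitable tangential vector field and integrating by parts in $A\times(0,T)$, one recovers control of the $H^{1/2}(0,T;L^2(\Gamma_1))$-norm of $P(w)\cdot n$ in terms of the interior energies already bounded in Step 2 plus boundary norms of $w$. Applied once more to $\partial_t w$, this upgrades to the $H^{3/2}(0,T;L^2(\Gamma_1))$-control of $P(u)\cdot n$, yielding the second estimate in \eqref{eq:estimate-N} together with the $T^{1/2}$ factor on $\norm{u_0}_{H^3(A)}$ coming from the fact that the multiplier identity bounds $H^{1/2,\mathrm{trace}}$-norms in terms of $L^2(0,T;\cdot)$-energies.

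\textbf{Uniqueness} is standard: the difference of two solutions satisfies the reduced homogeneous system with vanishing data, and the basic energy identity forces it to be zero. The main difficulty is isolated in Step 3; all other steps are adaptations of classical hyperbolic theory with careful bookkeeping of the $T$-dependence of the constants, as recorded in \eqref{eq:estimate-N}.
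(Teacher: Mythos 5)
Your overall strategy -- converting the velocity boundary condition $\partial_t u = w$ on $\Gamma_1$ to a displacement boundary condition $u = h$ with $h(\cdot,t) := u_0|_{\Gamma_1} + \int_0^t w(\cdot,s)\,ds$, and then invoking hidden trace regularity of LLT type -- matches the paper's skeleton. However, the paper stops there: it observes $h\in H^{5/2,5/2}(\Gamma_1\times(0,T))$, quotes the existence theory for the \emph{non-homogeneous} Dirichlet problem \eqref{eq:Ni}, and then cites the sharp trace regularity of \cite{LLT} (Remark~2.10, see also \cite{raymond2014}) directly to get $u\in C([0,T];H^{5/2}(A))$, $\partial_t u\in C([0,T];H^{3/2}(A))$, $\partial_t^2 u\in C([0,T];H^{1/2}(A))$ and $P(u)\cdot\n\in H^{3/2,3/2}(\Gamma_1\times(0,T))$, with a $T$-independent constant. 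Your additional reduction (lifting $h$ to $U_h$ and passing to a homogeneous Dirichlet problem) is unnecessary and introduces an anisotropic space-time lifting that is not covered by the purely spatial trace theorem you cite; this can be repaired, but it is not what the paper does.

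The genuine gap is in your Step 3. The $H^{3/2,3/2}$ regularity of $P(u)\cdot\n$ is exactly the nontrivial ``hidden regularity'' that the paper obtains from \cite{LLT}, and your sketch of reproving it does not hold up. A single tangential-multiplier identity applied to $w=\partial_t u$ controls $\norm{P(w)\cdot\n}_{L^2(\Gamma_1\times(0,T))}$, i.e.\ gives $P(u)\cdot\n\in H^1(0,T;L^2(\Gamma_1))$, \emph{not} the $H^{1/2}(0,T;L^2(\Gamma_1))$ control of $P(w)\cdot\n$ you claim; the half-integer temporal gain requires the interpolation/duality machinery of \cite{LLT}, which you suppress. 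Moreover, your ``apply once more to $\partial_t w$'' is not available at the regularity level of the theorem: $\partial_t^2 u$ is only in $C([0,T];H^{1/2}(A))$ (and $\partial_t^3 u$ is not even in $L^2$ in space), which is strictly below the $H^1\times L^2$ energy level needed for the multiplier identity to close. As written, the proposal would yield $P(u)\cdot\n\in H^1(0,T;L^2(\Gamma_1))$ only; the extra half derivative in time is precisely the content of \cite[Remark 2.10]{LLT} that the paper invokes and that your sketch does not supply. The remaining steps (energy estimates with $T$-tracking, the $L^2(0,T;H^{3/2}(\Gamma_1))$ and $H^1(0,T;H^{1/2}(\Gamma_1))$ parts of the estimate via the spatial trace theorem and Lemma~\ref{lem:interpolation}, and uniqueness) are correct and consistent with the paper.
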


\begin{proof}
We first note that the function $h:(x,t)\mapsto\displaystyle \int^t_0w(x,s)ds+u_0(X)$ satisfies $h\in H^{5/2,5/2}(\Gamma_1\times(0,T))$. We then consider the initial-boundary value problem 
\begin{equation}\label{eq:Ni}
\begin{aligned}
&\rho_B\frac{\partial^2 u}{\partial t^2}=\div P(u) &&\text{in }A\times(0,T),
\\
&P(u)\cdot \n=0 &&\text{on }\Gamma_0\times(0,T),
\\
&u=h &&\text{on }\Gamma_1\times(0,T),
\\
&u(\cdot,0)=u_0 &&\text{in }A,
\\
&\frac{\partial u}{\partial t}(\cdot,0)=u_1 &&\text{in }A.
\end{aligned}
\end{equation}
Existence and uniqueness of the solution $u$ to \eqref{eq:Ni} is classical, see e.g. \cite{lionsmagenesI}. Concerning the regularity of $u$, the sharp trace regularity results in \cite{LLT}  follow also for \eqref{eq:Ni}, yielding the following regularity for $u$ (cf. \cite[Remark 2.10]{LLT}, see also \cite[Section 3]{raymond2014}): 
\[\begin{split}
&u\in C([0,T];H^{5/2}(A)),
\\
&\frac{\partial u}{\partial t}\in C([0,T];H^{3/2}(A)),
\\
&\frac{\partial^2u}{\partial t^2}\in C([0,T];H^{1/2}(A)),
\\
&P\left(u\right)\cdot n\in H^{3/2,3/2}(\Gamma_1\times (0,T)),
\end{split}\]
with 
\[
\norm{u}_{\mathcal U}
\le C\left[\norm{u_0}_{H^{5/2}(A)}+\norm{u_1}_{H^{3/2}(A)}+\norm{h}_{H^{5/2,5/2}(\Gamma_1\times(0,T))}\right],
\]
for some positive constant $C$ independent on $T$ (the fact that the constant is independent of $T$ can be deduced as in \cite[Theorem 3.2 and comments immediately before]{raymond2014} plus an interpolation argument). From the definition of $h$, the above embedding, trace theorem, and Jensen's inequality, we have that 
\begin{multline*}
\norm{h}_{H^{5/2,5/2}(\Gamma_1\times(0,T))}\le T\norm{w}_{L^2(0,T;H^{5/2}(\Gamma_1))}+\sqrt 2\sqrt T \norm{u_0}_{H^3(A)}
\\
+\norm{w}_{H^{3/2}(0,T;L^2(\Gamma_1))}.
\end{multline*}
We also note that 
$\partial u/\partial t\in C([0,T];H^{1}(\Gamma_1))$. In addition, by Lemma \ref{lem:interpolation} part (I1) (with $m=1$ and $\theta=3/4$), 
\[
w\in H^{1/2+1/4}(0,T;H^1(\Gamma_1))
\hookrightarrow C([0,T];H^1(\Gamma_1)). 
\]
Thus, 
\[
\frac{\partial u}{\partial t}=w\qquad \text{on }\Gamma_1\times (0,T).
\]
Thanks to the regularity of $P(u)\cdot\n$ on the boundary, the above boundary condition and the regularity of $w$, we easily deduce \eqref{eq:estimate-N}$_2$. In addition,  
we have that  
\[
\frac{\partial}{\partial t}(P(u)\cdot n)\in L^2(0,T;H^{3/2}(\Gamma_1)). 
\]
%Since also $P(u)\cdot\n=0$ on $\Gamma_0$, we can further ascertain that  
%\[
%P(u)\cdot \n\in H^1(0,T;H^{3/2}(\partial A)). 
%\]
In particular, $\norm{P(u)\cdot n}_{H^1(0,T;H^{1/2}(\Gamma_1))}\le c\left(T^{1/2}\norm{u}_{\mathcal U}+\norm{w}_{L^2(0,T;H^{3/2}(\Gamma_1))}\right)$, for some positive constant $c$, independent of $T$. 
\end{proof}
\end{appendices}

\bmhead{Acknowledgements}
%
%Giusy Mazzone gratefully acknowledges the support of the Natural Sciences and Engineering Research Council of Canada (NSERC) through the NSERC Discovery Grant ``Partially dissipative systems with applications to fluid-solid interaction problems''.
%
Giusy Mazzone is a member of ``Gruppo Nazionale per l'Analisi Matematica, la Probabilit\`a e le loro Applicazioni'' (GNAMPA) of Istituto Nazionale di Alta Matematica Francesco Severi (INdAM).

\section*{Declarations}
%
%Some journals require declarations to be submitted in a standardised format. Please check the Instructions for Authors of the journal to which you are submitting to see if you need to complete this section. If yes, your manuscript must contain the following sections under the heading `Declarations':
%\begin{itemize}
%\item Funding
%\item Conflict of interest/Competing interests (check journal-specific guidelines for which heading to use)
%\item Ethics approval and consent to participate
%\item Consent for publication
%\item Data availability 
%\item Materials availability
%\item Code availability 
%\item Author contribution
%\end{itemize}
%
%\noindent
%If any of the sections are not relevant to your manuscript, please include the heading and write `Not applicable' for that section. 
\subsection{Funding}
Giusy Mazzone gratefully acknowledges the support of the Natural Sciences and Engineering Research Council of Canada (NSERC) through the NSERC Discovery Grant ``Partially dissipative systems with applications to fluid-solid interaction problems''.

\subsection{Conflict of interest/Competing interests}
Not applicable. 

\subsection{Ethics approval and consent to participate}
Not applicable. 

\subsection{Consent for publication}
Not applicable.

\subsection{Data availability}
Not applicable. 

\subsection{Materials availability}
Not applicable.

\subsection{Code availability}
Not applicable.

\end{document}